\definecolor{darkgreen}{cmyk}{1,0,1,.2}
\definecolor{m}{rgb}{1,0.1,1}
\definecolor{green}{cmyk}{1,0,1,0}
\definecolor{test}{rgb}{1,0,0}
\definecolor{cmyk}{cmyk}{0,1,1,0}
\newtheorem{theorem}{Theorem}[section]
\newtheorem*{thm}{Theorem 1.1}
\newtheorem{proposition}[theorem]{Proposition}
\newtheorem{lemma}[theorem]{Lemma}
\theoremstyle{remark}
\newtheorem{remark}[theorem]{Remark}
\theoremstyle{definition}
\newtheorem{definition}[theorem]{Definition}
\numberwithin{equation}{subsection}
\def\ch{\operatorname{ch}}
\def\Ch{\operatorname{Ch}}
\def\ind{\operatorname{ind}}
\def\End{\operatorname{End}}
\def\Hom{\operatorname{Hom}}
\def\Ker{\operatorname{Ker}}
\def\Tr{\operatorname{Tr}}
\def\STr{\operatorname{Str}}
\def\tr{\operatorname{tr}}
\DeclareMathOperator{\str}{str}
\DeclareMathOperator{\id}{id}
\def\A{\mathbb A}
\def\C{\mathbb C}
\def\R{\mathbb R}
\def\Z{\mathbb Z}
\def\maA{{\mathcal A}}
\def\maD{{\mathcal D}}
\def\maE{{\mathcal E}}
\def\maF{{\mathcal F}}
\def\maL{{\mathcal L}}
\def\maH{{\mathcal H}}
\def\maS{{\mathcal S}}
\def\pa{\partial}
\begin{document}

\title
{Local index theorem for projective families}


\author[M.-T. Benameur]{Moulay-Tahar Benameur}
\address{UMR 7122 du CNRS, Universit\'{e} de Metz, Ile du Saulcy, Metz, France}
\email{benameur@univ-metz.fr}
\author[A. Gorokhovsky ]{Alexander Gorokhovsky}
\address{Department of Mathematics, University of Colorado, Boulder, CO 80305-0395, USA}
\email{Alexander.Gorokhovsky@colorado.edu}

\thanks{The second author was partially supported
by the NSF grant DMS-0900968.
}

\keywords{Key words: Gerbe, Twisted $K$-theory, Family index, Superconnection.}
\subjclass{19L47, 19M05, 19K56.}

\begin{abstract} We give a superconnection proof of the Mathai-Melrose-Singer index theorem for the family of twisted Dirac operators
\cite{MMS1, MMS2}.
\end{abstract}

\maketitle
\tableofcontents
\newpage

\section{Introduction}

Let  $\pi:M\to B$ be a smooth fibration. Given a class $\delta \in H^3(B, \Z)$ Mathai, Melrose
and Singer defined an algebra of twisted by this class  (also called projective) families of pseudodifferential operators.
In order to give this definition in \cite{MMS1} it is assumed that $\delta$ is a torsion class. In \cite{MMS2} the corresponding assumption is that  $\delta = \alpha \cup \beta$, $\alpha \in H^1(B, \Z)$, $\beta \in H^2(B, \Z)$ and $\pi^*\beta =0$.

There is a notion of ellipticity for such a twisted pseudodifferential family and for such an elliptic family
one can then define its index as an element of the $K$-theory of the algebra of smoothing operators.
Mathai, Melrose and Singer than prove in \cite{MMS1, MMS2} an index theorem for such elliptic families
thus giving an extension of the Atiyah-Singer family index theorem \cite{AtiyahSinger4} to the twisted case.
general theory of connections
In this paper we give a superconnection  proof of the cohomological version of this index theorem
for a projective family of Dirac operators. We assume that we are given a gerbe $\maL$ on $B$ with a class
$\delta=\left[\maL\right]\in H^3(B, \Z)$. Our conditions on this class are somewhat weaker than in \cite{MMS1, MMS2};
namely we assume that $\pi^* \delta$ is a torsion element in $H^3(M, \Z)$.
Assume that we are given a $\Z_2$-graded $\maL$ -twisted Clifford module $\maE$ on $M$ as defined in   the Section
\ref{Dirac operators}. We note that such Clifford modules always exist under our assumptions on the class of the gerbe $\maL$. We define the algebra $\Psi_\maL( M|B; \maE)$ of projective families of pseudodifferential operators on $\maE$.
One can then define the index of (the positive part of) the twisted Dirac operator $\ind D^+ \in K_0(\Psi^{-\infty}_\maL( M|B; \maE))$. Here by $\Psi^{-\infty}_\maL( M|B; \maE)$ we denote the algebra of smoothing $\maL$-twisted pseudodifferential operators on $\maE$.

In order to define the Chern character of the index we use cyclic homology and the map constructed in \cite{MathaiStevenson}. This Chern character takes values in the twisted cohomology of $B$ defined as follows.
Let $\varOmega \in \Omega^3(B)$ be a form representing the class $[\maL]$ and let $u$ be a formal variable of degree $-2$.
Twisted cohomology $H^{\bullet}_{\maL}(B)$ is then the homology of the complex $\Omega^*(B)[u]$
with the differential $d_{\varOmega}=ud+u^2\varOmega\wedge \cdot$. Note that the form $\varOmega$ and thus the
complex depend on the choice of connection on $\maL$. Nevertheless the homologies of all the complexes thus obtained
are canonically isomorphic.
Following Mathai and Stevenson \cite{MathaiStevenson}
one  defines a morphism of complexes $\Phi_{\nabla^{\maH}} \colon CC^-_{\bullet}(\Psi^{-\infty}_\maL( M|B; \maE)) \to  \left( \Omega^*(B)[u], d_{\varOmega}\right) $. Here $CC^-_{\bullet}$ denotes the negative cyclic complex.
By composing $\Phi_{\nabla^{\maH}} $  with the Chern character
$\ch \colon K_0(\Psi^{-\infty}_\maL( M|B; \maE)) \to
HC^-_{0}(\Psi^{-\infty}_\maL( M|B; \maE))$ we obtain the class 
$\left[\Phi_{\nabla^{\maH}}(\ch(\ind D^+)) \in H^{\bullet}_{\maL}(B) \right]$. The main result of the paper is the proof of the following theorem expressing the Chern character of the index in terms of characteristic classes:

\begin{theorem}\label{main}\ Let  $D$ be a projective family of Dirac operators on a horizontally $\maL$-twisted Clifford  module $\maE$ on a  fibration  $\pi:M\to B$. Then the following equality holds in $H^{\bullet}_{\maL}(B)$:
$$
\left[\Phi_{\nabla^{\maH}} (\ch (\ind D^+))\right]  = 
\left[u^{-\frac{k}{2}}\int_{M|B}\widehat{A}\left(\frac{u}{2 \pi i}R^{M|B}\right)  \Ch_{\maL}(\maE/\maS)\right],
$$
where $k=\dim M -\dim B$ is the dimension of the fibers.
\end{theorem}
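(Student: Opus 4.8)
The plan is to adapt Bismut's superconnection proof of the Atiyah--Singer family index theorem to the projective setting and to read the resulting identity of differential forms through the Mathai--Stevenson chain map, so that it becomes an identity in the twisted cohomology $H^\bullet_\maL(B)$. Since $\pi^*[\maL]$ is torsion, $\maE$ is a genuine $\Z_2$-graded twisted Clifford module on $M$ and the fiberwise Dirac operator $D$ is an honest odd self-adjoint operator; in particular $e^{-tD^2}$ lies in the smoothing ideal $\Psi^{-\infty}_\maL(M|B;\maE)$ and is trace class there. I would fix the projective Bismut superconnection $\A_t=\nabla^{\maH}+t^{1/2}D+\frac14 t^{-1/2}c(T)$ on the $\maL$-twisted infinite-rank bundle $\maH=\pi_*\maE$ over $B$, where $\nabla^{\maH}$ is the connection entering the definition of $\Phi_{\nabla^{\maH}}$ and $T$ is the curvature of the fibration. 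The first step is to show that $\STr(e^{-\A_t^2})$, with the appropriate powers of the degree-$(-2)$ variable $u$ inserted, is a well-defined element of $\Omega^*(B)[u]$ which is $d_\varOmega$-closed with $t$-independent class. This is Bismut's computation plus the one genuinely new ingredient: since $\nabla^{\maH}$ is only a \emph{projective} connection, $\A_t^2$ involves, besides a genuine endomorphism-valued part, a scalar contribution built from $\varOmega$, so the Duhamel transgression formula for $\frac{d}{dt}\STr(e^{-\A_t^2})$ closes up under $d_\varOmega=ud+u^2\varOmega\wedge\cdot$ rather than under $ud$.

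Next I would identify this class with $\Phi_{\nabla^{\maH}}(\ch(\ind D^+))$. One route is to apply the Mathai--Stevenson morphism directly to a heat-kernel representative (a JLO-type cocycle built from the semigroup $e^{-sD^2}$) of $\ch(\ind D^+)\in HC^-_0(\Psi^{-\infty}_\maL(M|B;\maE))$ and to check that $\Phi_{\nabla^{\maH}}$ carries it to $\STr(e^{-\A_t^2})$; since the heat-kernel cocycles for different parameters are negative-cyclically cohomologous, this is consistent with the $t$-independence just established. Equivalently, letting $t\to\infty$ one reduces $\maH$ to a finite-rank $\maL$-twisted bundle in the spirit of Mishchenko--Fomenko, so that $\STr(e^{-\A_\infty^2})$ becomes the twisted Chern character of the virtual index bundle, whose image under $\Phi_{\nabla^{\maH}}$ is by construction $\Phi_{\nabla^{\maH}}(\ch(\ind D^+))$.

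Finally I would compute $\lim_{t\to0}\STr(e^{-\A_t^2})$ by Getzler rescaling, adapted to the superconnection as in Berline--Getzler--Vergne. Here the hypothesis that $\maE$ is \emph{horizontally} twisted enters decisively: the twisting is trivial along the fibers, so the rescaled fiberwise heat kernel has the usual Mehler-type short-time limit and the twisting contributes only through the curvature of the auxiliary bundle, i.e. through the relative twisted Chern character $\Ch_\maL(\maE/\maS)$ of the Clifford module. Keeping track of $u$ through the rescaling supplies the normalization $\frac{u}{2\pi i}R^{M|B}$ and the overall factor $u^{-k/2}$, and the fiberwise supertrace becomes integration over the fiber, giving
\[
\lim_{t\to0}\STr\!\left(e^{-\A_t^2}\right)=u^{-\frac k2}\int_{M|B}\what{A}\!\left(\frac{u}{2\pi i}R^{M|B}\right)\,\Ch_\maL(\maE/\maS).
\]
Combined with the $t$-independence of the class and the identification of the previous paragraph, this yields the stated equality in $H^\bullet_\maL(B)$.

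The main obstacle I expect is the middle step: matching the purely algebraic negative-cyclic Chern character, pushed through the Mathai--Stevenson cochain map, with the analytic superconnection form $\STr(e^{-\A_t^2})$ in a situation where $\maH$ is not an honest vector bundle but a bundle-gerbe (Azumaya) module, so that $\STr$, $e^{-\A_t^2}$, and the very notion of the index bundle must be interpreted through the local/twisted trace formalism. In particular one must verify that $\Phi_{\nabla^{\maH}}$ reproduces exactly the scalar $\varOmega$-correction responsible for $d_\varOmega$-closedness, and one must control the $t\to\infty$ end — the projective analogue of the Bismut--Cheeger/Mishchenko--Fomenko reduction — carefully enough to be certain the class obtained is genuinely $\ch(\ind D^+)$. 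By comparison the $t\to0$ computation is ``classical modulo the twisting'' thanks to horizontality, the only delicate point being that Getzler rescaling leaves the twisting curvature inert, so that it passes untouched into the $\Ch_\maL$ factor.
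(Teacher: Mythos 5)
Your three-step architecture --- showing that $\STr(e^{-\A_t^2})$ (suitably rescaled in $u$) is $d_\varOmega$-closed with $t$-independent class, identifying this class with $\Phi_{\nabla^{\maH}}(\ch(\ind D^+))$, and computing the $t\to 0$ limit --- is exactly the architecture of the paper, realized in Theorems~\ref{superconnection index} and~\ref{local index}. The outer two steps are sound as you describe them: for the $t\to 0$ step the twisting is trivial along the fibers, so on each $\pi^{-1}U_\alpha$ Bismut's local index theorem and the Bismut--Freed transgression apply verbatim to $\A_\alpha$, and multiplying the resulting local identities by $e^{-\frac{u}{2\pi i}\pi^*\omega_\alpha}$ converts $ud$ into $d_\varOmega$ and makes them glue; your Getzler-rescaling sketch amounts to the same computation.

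The gap, which you flag yourself, is the middle identification, and neither of the two routes you propose is what the paper does; both would in fact require proving precisely the assertion at stake. The JLO route presupposes that a heat-kernel chain represents the idempotent Chern character $\ch[P_D-Q]$ in negative (or entire) cyclic homology; that is the nontrivial content, not a step one can simply ``check'' afterward. The $t\to\infty$ Mishchenko--Fomenko route needs the index to be realized by a finite-rank $\maL$-twisted kernel bundle, which would require a constant-rank or spectral-cut hypothesis that the theorem does not impose. The paper's resolution in Theorem~\ref{superconnection index} is different and avoids both issues: pass to the doubled module $\widetilde{\maE}=\maE\oplus\maE$; construct the order-zero invertible $U_D$ so that the idempotent $P_D$ is conjugate, inside the algebra $\maF$ of even order-zero operators $F$ with $[D,F]$ of order zero, to the fixed diagonal projection onto the first summand; since inner automorphisms act trivially on entire cyclic homology, $\Ch(P_D)$ and the $\Ch$ of the fixed projection are homologous in $CC^{entire}_{\bullet}(\maF)$; and the explicit Mathai--Stevenson chain homotopy between $\Phi_{\widetilde{\nabla}^{\maH}}$ and $\Phi_{\widetilde{\A}}$ on the entire cyclic complex then reduces everything to evaluating $\Phi_{\widetilde{\A}}$ on the fixed projection, which gives $\STr(e^{-\frac{u}{2\pi i}\theta^{\A_{u^{-1}}}})$ on the nose. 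Without an argument of this kind (or a correct substitute for one of your proposed routes) your middle step does not close.
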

Here, as usual, $\widehat{A}$ is the power series defined by
$\widehat{A}(x)=\left.\det\right.^{1/2}\left(\frac{x/2}{\sinh x/2}\right)$ and $\Ch_{\maL}(\maE/\maS)$ is the 
twisted relative Chern character form of $\maE$, see Proposition \ref{twchern} and equation \eqref{deftwistedchern}.

The characteristic classes appearing in the right hand side are defined in the Section \ref{Dirac operators}.

Our proof uses the Bismut superconnection formalism \cite{Bismut}. We extend the notion of superconnection to the twisted context and show in the Theorem \ref{superconnection index} that the Chern character of superconnection
adapted to $D$ computes the Chern character of $\ind D^+$. Then in the Theorem \ref{local index} using the results of \cite{Bismut} and \cite{BismutFreed} we compute the  limit of the Chern character of the rescaled superconnection obtaining the expression in the right hand side of the index formula. Together these results imply the Theorem \ref{main}.

The paper is organized as follows. In the Section \ref{preliminaries} we give a brief review of cyclic homology,
gerbes and connections on them. We also discuss twisted bundles and their characteristic forms.
In the Section \ref{projective} we give the definitions of the algebra of twisted families of pseudodifferential operators. Finally in the section \ref{Dirac} we define the projective family of Dirac operators on a twisted Clifford module and give the proofs of the main theorems of the paper.

Some work in related direction recently appeared in \cite{CMW, CW, CaW}. Related questions of deformation
theory are considered in \cite{BGNT1,BGNT2}.
 This paper is a byproduct of a joint project with E.~Leichtnam. The authors would like to thank him and
C. Blanchet, M. Karoubi, M.~Lesch, V.~Mathai, R.~Nest and B. Wang for helpful discussions. The authors worked on this paper while visiting
CIRM, Luminy as well as  Hausdorff institute and Max Planck Institute for Mathematics in Bonn. Part of this work was done while the second author was visiting the Laboratoire de Math\'ematiques et Applications of Metz and he is grateful for the hospitality.

\section{Preliminaries}\label{preliminaries}

\subsection{Cyclic homology}

The general reference for this material is the book \cite{Loday}.

Let $A$ be a complex unital algebra. Set $C_k(A)= A\otimes (A/\C1)^{\otimes k}$. Let $u$ be a formal variable of degree $-2$.
The space of negative cyclic chains of degree $ l \in \Z$ is defined by
\[
CC^-_{l}(A) = \left(C_{\bullet}(A)[[u]]\right)_l=\prod \limits_{-2n+k=l,\ n\ge 0} u^nC_k(A).
\]
The boundary is given by $b+uB$ where $b$ and $B$ are the Hochschild and Connes boundaries of the cyclic complex.
The homology of this complex is denoted $HC^-_{\bullet}(A)$.
When the algebra $A$ is $\Z_2$ graded they incorporate the relevant signs.
If $A$ is not necessarily unital denote by $A^+$ its unitalisation and set $CC^-_{l}(A)=CC^-_{l}(A^+)/CC^-_{l}(\C)$.
If $I$ is an ideal in a unital algebra $A$ the relative cyclic complex  is defined by
$CC^-_{\bullet}(A, I) = \Ker \left( CC^-_{\bullet}(A) \to CC^-_{\bullet}(A/I)\right) $. One has a natural morphism of complexes $\iota \colon CC^-_{\bullet}(I) \to CC^-_{\bullet}(A, I)$ induced by the homomorphism $I^+ \to A$.

Recall that for an algebra $A$ we have  Chern character in cyclic homology   $\ch \colon K_0(A)\to HC_{0}^{-}(A)$.
It is  defined by the following formula. Let $P, Q \in M_n(A^+)$ be two idempotents in $n\times n$ matrices of the
algebra $A^+$, representing a class $[P-Q] \in K_0(A)$. Then

\begin{equation}\label{cyclicchern}
{\Ch\left([P-Q]\right) = \tr(P-Q)+ \sum_{n=1}^{\infty} (-u)^n\frac{(2n)!}{n!} \tr\left( \left(P-\frac{1}{2}\right)\otimes P^{\otimes (2n)}-  \left(Q-\frac{1}{2}\right)\otimes Q^{\otimes (2n)} \right)}
\end{equation}
We will use the notation $\Ch\left([P-Q]\right)$ for the cyclic cycle defined above and $\ch\left([P-Q]\right)$ for
its class in cyclic homology $HC_0^-(A)$.

We will also need to use the entire cyclic complex. For our purpose the
algebraic version from  \cite{ConnesBook}, IV.7.$\alpha$ Remark 7 b.
will be sufficient.
First recall that one has the periodic cyclic complex $\left(CC^{per}_{\bullet}(A), b+uB \right) $ where
$CC^{per}_{\bullet}(A) = C_{\bullet}(A)[u^{-1}, u]]$.
Assume we are given a  periodic chain $\alpha = \sum_{k\ge 0} \alpha_k u^k \in CC^{per}_{m}(A) $,  $\alpha_k \in C_{2k+m}(A)$. Then $\alpha$ is called entire if
there exist a finite dimensional subspace $V\subset A$, $1 \in V$ and $C>0$ (depending on $\alpha$) such that $\alpha_k \in V\otimes (V/\C1)^{\otimes k}$ and $\|\alpha_k\| \le  C^k k! $.
Here the norms on $V\otimes (V/\C1)^{\otimes k}$ are induced by an arbitrary
norm on $V$.
We denote the entire cyclic complex of $A$ by $CC^{entire}_{\bullet}(A)$.

Note that the chain $\Ch\left([P-Q]\right)$ defined in \eqref{cyclicchern} is an element in $CC^{entire}_{0}(A)$.

\subsection{Notion of a gerbe}

We give here only the general overview, refering the reader to \cite{Brylinski} and \cite{Hitchin}
for the details. The differential geometry of not necessarily abelian gerbes is described in \cite{BM}.
We will describe the gerbes in terms of their descent data.

Let $M$ be a smooth manifold. Given an open cover $(U_\alpha)_{\alpha\in \Lambda}$ of $M$, we set as usual
$$
U_{\alpha_1\cdots \alpha_k} = \bigcap_{1\leq j \leq k} U_{\alpha_j}.
$$

\begin{definition}
A descent datum for a gerbe $\maL$ on $M$   is  the collection $(U_\alpha, \maL_{\alpha\beta}, \mu_{\alpha\beta\gamma})$ where $(U_\alpha)_{\alpha\in \Lambda}$ is an open cover of $M$, $(\maL_{\alpha\beta} \to U_{\alpha\beta})_{\alpha, \beta\in \Lambda}$ is a collection of line bundles and $\mu_{\alpha\beta\gamma}: \maL_{\alpha\beta} \otimes\maL_{\beta\gamma} \to \maL_{\alpha\gamma}$ are line bundle isomorphisms over each triple intersection $U_{\alpha\beta\gamma}$ such that over each quadruple intersection $U_{\alpha\beta\gamma\delta}$, the following diagram commutes

\[
\begin{CD}
\maL_{\alpha\beta}\otimes \maL_{\beta\gamma}\otimes\maL_{\gamma\delta} 
@>\mu_{\alpha\beta\gamma}\otimes \id>>
\maL_{\alpha\gamma}\otimes\maL_{\gamma\delta}\\
@V \id\otimes\mu_{\beta\gamma\delta}VV  @VV\mu_{\alpha\gamma\delta}V\\
\maL_{\alpha\beta}\otimes \maL_{\beta\delta} @>\ \ \mu_{\alpha\beta\delta}\ \ >>\maL_{\alpha\delta}
\end{CD}
\]
\end{definition}

Notice that we don't assume in this definition  that the open sets $(U_\alpha)_{\alpha\in \Lambda}$
are contractible.

Given two  descent data $ (U_\alpha, \maL_{\alpha\beta}, \mu_{\alpha\beta\gamma})$ and
$ (U_\alpha, \maL_{\alpha\beta}', \mu_{\alpha\beta\gamma}')$ on the same open cover $\{U_{\alpha}\}$
an isomorphism between them is given by line bundles $S_{\alpha}$ on $U_{\alpha}$ and isomorphisms of line
bundles $\lambda_{\alpha \beta} \colon S_{\alpha}^{-1} \otimes \maL_{\alpha \beta} \otimes S_{\beta} \to \maL_{\alpha \beta}'$ over $U_{\alpha \beta}$ so that the diagram

\[
\begin{CD} 
S_{\alpha}^{-1}\otimes \maL_{\alpha \beta} 
\otimes S_{\beta}\otimes S_{\beta}^{-1}\otimes \maL_{\beta \gamma} \otimes S_{\gamma} 
@>\id \otimes \mu_{\alpha \beta \gamma} \otimes \id>>
S_{\alpha}^{-1}\otimes\maL_{\alpha \gamma} \otimes S_{\gamma} \\
@V\lambda_{\alpha \beta}\otimes\lambda_{\beta \gamma}VV @VV\lambda_{\alpha \gamma}V \\
\maL_{\alpha \beta}' \otimes \maL_{\beta \gamma}' @>\ \ \ \mu_{\alpha \beta \gamma}'\ \ \ >>\maL_{\alpha \gamma}'
\end{CD}
\]

commutes.

  Given two
 isomorphisms     $(S_{\alpha}, \lambda_{\alpha \beta})$ and $(S_{\alpha}', \lambda_{\alpha \beta}')$
between  $ (U_\alpha, \maL_{\alpha\beta}, \mu_{\alpha\beta\gamma})$ and
$ (U_\alpha, \maL_{\alpha\beta}',  \mu_{\alpha\beta\gamma}')  $, a two-morphism between them is a collection
of line bundle isomorphisms $\nu_{\alpha} \colon S_{\alpha} \to S_{\alpha}'$ such that
\[
\lambda_{\alpha \beta}' \circ (\nu_{\alpha}^{-1} \otimes \id \otimes \nu_{\beta}) =\lambda_{\alpha \beta}
\]
where we denote by
$\nu_{\alpha}^{-1}$ the isomorphism $S_{\alpha}^{-1} \to (S_{\alpha}')^{-1}$ induced by $\nu_{\alpha}$.

Let $(V_i, \varrho)_{i\in I}$ be a refinement of the open cover  of $(U_\alpha)_{\alpha\in \Lambda}$ of $M$. So $(V_i)_{i\in I}$ is an open cover of $M$ and
$$
\varrho: I \rightarrow \Lambda \quad \text{ such that } \quad V_i \subset U_{\varrho(i)}.
$$
Then restriction to the refinement of $ (U_\alpha, \maL_{\alpha\beta}, \mu_{\alpha\beta\gamma})$ is the descent datum $\maL'=(V_i, \maL'_{ij}, \mu'_{ijk})$ given by:
$$
\maL'_{ij} := \maL_{\varrho(i)\varrho(j)}|_{V_{ij}} \text{ and } \mu'_{ijk} := \mu_{\varrho(i)\varrho(j)\varrho(k)}|_{V_{ijk}}.
$$
Similarly one defines restriction of the isomorphisms and $2$-morphisms to a refinement. We do not
distinguish between a descent datum, isomorphisms of descent data, etc., and  their restrictions to a refinement.
Thus  for instance, the isomorphism between two  descent data $ (U_\alpha, \maL_{\alpha\beta}, \mu_{\alpha\beta\gamma})$ and
$ (U_\alpha', \maL_{\alpha\beta}', \mu_{\alpha\beta\gamma}')$ is an isomorphism between their restrictions to some
common refinement of $\{ U_\alpha\}$ and $\{ U_\alpha'\}$.

An equivalence class of  Dixmier-Douady gerbes  on $M$ is an equivalence class of the descent data on $M$.
More precisely a gerbe is a maximal collection of descent data $\maD_i$, $i \in A$  together with the
isomorphisms $s_{ij}\colon \maD_j \to \maD_i$ for each $i, j \in A$ and $2$-morphisms $\nu_{ijk} \colon
s_{ij} s_{jk} \to s_{ik}$ satisfying the natural associativity condition. We refer the reader
to the book \cite{Brylinski} for the details.

If the cover $U_{\alpha}$ is good \cite{BottTu} all the bundles $\maL_{\alpha \beta}$ are trivializable.
After choice of such trivialization
the collection $(\mu_{\alpha\beta\gamma})$ can be viewed  as a \v{C}ech $2$-cochain with coefficents in the sheaf $\underline{\C^*}$ of smooth functions with values in the nonzero complex numbers $\C^*$.
The compatibility  condition over $U_{\alpha\beta\gamma\delta}$ tells us that $\mu$ is a $2$-cocycle and hence defines a cohomology class  $[\mu]\in H^2(M; \underline{\C^*}) \cong H^3(M, \Z)$. This class is a well defined invariant of the gerbe called the Dixmier-Douady class. We denote this class by $[\maL]$.  Every class in $H^3(M, \Z)$ is a class of a gerbe defined by this class uniquely up to an isomorphism  (see \cite{Brylinski}).

Given a smooth map $f: M'\to M$ between smooth manifolds $M'$ and $M$, we can pull-back any decent datum for a gerbe on $M$ to a descent datum on $M'$. The pull backs of isomorphic descent data are isomorphic and thus we obtain a well-defined pull-back of a gerbe. Clearly the Dixmier-Douady class of the pull-back is the pull-back of the Dixmier-Douady class.

An unitary descent datum is  $(U_\alpha, \maL_{\alpha\beta},  \mu_{\alpha\beta\gamma})$ together with a choice of metric on each $\maL_{\alpha\beta}$  such that each
$\mu_{\alpha\beta\gamma}$ is an isometry.
A notion of unitary equivalence of two unitary descent data on the same
open cover $U_{\alpha}$ is obtained from the notion of equivalence above by requiring that each
line bundle $S_{\alpha}$ is Hermitian and each $\lambda_{\alpha \beta}$ is an isometry.  The definition
of $2$-morphisms is modified by requiring each $\nu_{\alpha}$ to be an isometry.
It is clear that the restriction of a unitary descent datum to a refinement is again unitary.
Then a unitary gerbe is an equivalence class of unitary descent data in the sense described above.

\subsection{Connections on gerbes}

\begin{lemma}\label{descconnection}
Let  $(U_\alpha, \maL_{\alpha\beta}, \mu_{\alpha\beta\gamma})$  be a descent datum on $M$.  There exists  a collection $(\nabla_{\alpha\beta})$ of connections on $(\maL_{\alpha\beta})$ such that for any $(\alpha, \beta, \gamma)\in \Lambda^3$ with $U_{\alpha \beta \gamma} \ne \emptyset$:
$$
\mu_{\alpha\beta\gamma}^* \nabla_{\alpha\gamma} = \nabla_{\alpha\beta}\otimes \id + \id \otimes \nabla_{\beta\gamma}.
$$
If the descent datum is unitary  each $\nabla_{\alpha\beta}$ can be chosen Hermitian.
\end{lemma}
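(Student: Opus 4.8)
The plan is to build the family $(\nabla_{\alpha\beta})$ by starting from an arbitrary choice of connections and then correcting it by a \v{C}ech coboundary of $1$-forms; the obstruction to this correction will be a \v{C}ech $2$-cocycle that is automatically killed by the quadruple-overlap compatibility of the descent datum.

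First I would pick, for each pair $(\alpha,\beta)$ with $U_{\alpha\beta}\neq\emptyset$, an arbitrary connection $\nabla^{0}_{\alpha\beta}$ on $\maL_{\alpha\beta}$ (a Hermitian one in the unitary case; such connections exist by the usual local construction patched with a partition of unity). For each triple $(\alpha,\beta,\gamma)$ with $U_{\alpha\beta\gamma}\neq\emptyset$, both $\mu_{\alpha\beta\gamma}^{*}\nabla^{0}_{\alpha\gamma}$ and $\nabla^{0}_{\alpha\beta}\otimes\id+\id\otimes\nabla^{0}_{\beta\gamma}$ are connections on the line bundle $\maL_{\alpha\beta}\otimes\maL_{\beta\gamma}$ over $U_{\alpha\beta\gamma}$, so their difference is a scalar $1$-form
\[
a_{\alpha\beta\gamma}:=\mu_{\alpha\beta\gamma}^{*}\nabla^{0}_{\alpha\gamma}-\bigl(\nabla^{0}_{\alpha\beta}\otimes\id+\id\otimes\nabla^{0}_{\beta\gamma}\bigr)\in\Omega^{1}(U_{\alpha\beta\gamma}),
\]
which is purely imaginary in the unitary case, since there $\mu_{\alpha\beta\gamma}$ is an isometry and the $\nabla^{0}$'s are Hermitian. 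Thus $(a_{\alpha\beta\gamma})$ is a \v{C}ech $2$-cochain on the cover $(U_\alpha)$ with coefficients in the sheaf $\Omega^{1}_{M}$ of smooth $1$-forms (or in its fine subsheaf of imaginary-valued ones, in the unitary case).

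The key step is to show that $a$ is a cocycle, i.e. $a_{\beta\gamma\delta}-a_{\alpha\gamma\delta}+a_{\alpha\beta\delta}-a_{\alpha\beta\gamma}=0$ on $U_{\alpha\beta\gamma\delta}$. I would prove this by pulling the connection $\nabla^{0}_{\alpha\delta}$ back to the bundle $\maL_{\alpha\beta}\otimes\maL_{\beta\gamma}\otimes\maL_{\gamma\delta}$ along each of the two composite isomorphisms in the commuting square of the definition. Along $\mu_{\alpha\gamma\delta}\circ(\mu_{\alpha\beta\gamma}\otimes\id)$, substituting $\mu_{\alpha\gamma\delta}^{*}\nabla^{0}_{\alpha\delta}=\nabla^{0}_{\alpha\gamma}\otimes\id+\id\otimes\nabla^{0}_{\gamma\delta}+a_{\alpha\gamma\delta}$ and then applying $(\mu_{\alpha\beta\gamma}\otimes\id)^{*}$ — which acts only on the first two tensor factors, via $\mu_{\alpha\beta\gamma}^{*}\nabla^{0}_{\alpha\gamma}=\nabla^{0}_{\alpha\beta}\otimes\id+\id\otimes\nabla^{0}_{\beta\gamma}+a_{\alpha\beta\gamma}$, and leaves base $1$-forms unchanged — one obtains the connection $\nabla^{0}_{\alpha\beta}\otimes\id\otimes\id+\id\otimes\nabla^{0}_{\beta\gamma}\otimes\id+\id\otimes\id\otimes\nabla^{0}_{\gamma\delta}$ shifted by the $1$-form $a_{\alpha\beta\gamma}+a_{\alpha\gamma\delta}$. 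Along $\mu_{\alpha\beta\delta}\circ(\id\otimes\mu_{\beta\gamma\delta})$ the same computation, using $\mu_{\alpha\beta\delta}$ and $\mu_{\beta\gamma\delta}$, produces the same connection shifted by $a_{\alpha\beta\delta}+a_{\beta\gamma\delta}$. Since the two composites coincide, so do the two connections, and comparing the $1$-form parts gives $a_{\alpha\beta\gamma}+a_{\alpha\gamma\delta}=a_{\alpha\beta\delta}+a_{\beta\gamma\delta}$, which is exactly the desired cocycle identity.

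Finally, since $M$ is a manifold, hence paracompact, the sheaf $\Omega^{1}_{M}$ (and its imaginary-valued subsheaf) is fine, so $\check H^{2}$ of the given cover with these coefficients vanishes. Concretely, choosing a partition of unity $(\rho_\lambda)$ subordinate to $(U_\alpha)$ and setting $c_{\alpha\beta}:=\sum_{\lambda}\rho_{\lambda}\,a_{\lambda\alpha\beta}\in\Omega^{1}(U_{\alpha\beta})$ (each summand extended by zero outside the support of $\rho_\lambda$), the cocycle identity gives $c_{\beta\gamma}-c_{\alpha\gamma}+c_{\alpha\beta}=a_{\alpha\beta\gamma}$ on $U_{\alpha\beta\gamma}$, and the $c_{\alpha\beta}$ are imaginary-valued in the unitary case. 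Putting $\nabla_{\alpha\beta}:=\nabla^{0}_{\alpha\beta}+c_{\alpha\beta}$ one then computes
\[
\mu_{\alpha\beta\gamma}^{*}\nabla_{\alpha\gamma}-\bigl(\nabla_{\alpha\beta}\otimes\id+\id\otimes\nabla_{\beta\gamma}\bigr)=a_{\alpha\beta\gamma}+c_{\alpha\gamma}-c_{\alpha\beta}-c_{\beta\gamma}=0,
\]
and in the unitary case $\nabla_{\alpha\beta}$ is Hermitian because it differs from the Hermitian $\nabla^{0}_{\alpha\beta}$ by an imaginary-valued $1$-form. The only real subtlety is the tensor-factor bookkeeping in the cocycle step: one must track carefully which pullback acts on which factor and use that $\mu^{*}$ of a (Hermitian) connection is again a (Hermitian) connection and that pulling a connection back along an isomorphism of line bundles only adds a scalar $1$-form; granting this, the rest is the standard partition-of-unity argument.
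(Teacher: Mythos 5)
Your proof is correct and follows essentially the same route as the paper: pick arbitrary connections, observe that the failure of the desired compatibility is a scalar-valued \v{C}ech $2$-cochain, check it is a cocycle, and use fineness of the sheaf of $1$-forms to write it as a coboundary. The only difference is that you spell out the cocycle verification and the partition-of-unity construction of the primitive, both of which the paper merely asserts.
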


\begin{proof}
Fix for any $\alpha, \beta$ with $U_{\alpha\beta}\not = \emptyset$ a connection $\nabla'_{\alpha\beta}$ on $\maL_{\alpha\beta}$. We set for $U_{\alpha\beta\gamma}\not =\emptyset$,
$$
A_{\alpha\beta\gamma} := \mu_{\alpha\beta\gamma}^* \nabla_{\alpha\gamma} - \nabla_{\alpha\beta}\otimes \id - \id \otimes \nabla_{\beta\gamma}.
$$
Then using the identification
$$
\End(\maL_{\alpha\beta}\otimes \maL_{\beta\gamma}) \simeq U_{\alpha\beta\gamma}\times \C;
$$
we see that $A_{\alpha\beta\gamma}$ is identified with a differential $1$-form on the  open set $U_{\alpha\beta\gamma}$. We also have for $U_{\alpha\beta\gamma\delta} \not = \emptyset$
$$
A_{\beta\gamma\delta} + A_{\alpha\beta\delta} = A_{\alpha\gamma\delta} + A_{\alpha\beta\gamma}.
$$
Therefore, there exists $A'=(A'_{\alpha\beta})$ such that
$$
A_{\alpha\beta\gamma} = A'_{\beta\gamma} - A'_{\alpha\gamma} + A'_{\alpha\beta}.
$$
The collection of connections $(\nabla_{\alpha\beta}=\nabla'_{\alpha\beta} + A'_{\alpha\beta})$ is then a connection on the gerbe $\maL$.
\end{proof}

\begin{lemma}
Let $(\nabla_{\alpha\beta})$ be as above, and denote by $\omega_{\alpha\beta}=\nabla^2_{\alpha\beta}$ the curvatures
of the connections $\nabla_{\alpha \beta}$. Then
there exists a collection   of differential $2$-forms $\omega_\alpha\in \Omega^2(U_\alpha)$ such that
$$
\omega_{\alpha\beta}=\omega_{\alpha} - \omega_{\beta}, \quad \text{ for } U_{\alpha\beta} \not = \emptyset.
$$
\end{lemma}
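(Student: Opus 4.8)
The plan is to extract from the compatibility relation of Lemma \ref{descconnection} a \v{C}ech cocycle identity for the curvature $2$-forms, and then to split that cocycle by a partition of unity, using that the sheaf of smooth $2$-forms is fine.

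First I would square the identity $\mu_{\alpha\beta\gamma}^*\nabla_{\alpha\gamma} = \nabla_{\alpha\beta}\otimes\id + \id\otimes\nabla_{\beta\gamma}$ over $U_{\alpha\beta\gamma}$. On the one hand, for line bundles the curvature of $\nabla_{\alpha\beta}\otimes\id+\id\otimes\nabla_{\beta\gamma}$ is, under the identification $\End(\maL_{\alpha\beta}\otimes\maL_{\beta\gamma})\simeq U_{\alpha\beta\gamma}\times\C$ used in the proof of Lemma \ref{descconnection}, the scalar $2$-form $\omega_{\alpha\beta}+\omega_{\beta\gamma}$, the cross terms cancelling. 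On the other hand, pulling a line bundle connection back along an isomorphism covering $\id_M$ leaves its curvature $2$-form unchanged, so the curvature of $\mu_{\alpha\beta\gamma}^*\nabla_{\alpha\gamma}$ is $\omega_{\alpha\gamma}$. Hence
$$
\omega_{\alpha\gamma}=\omega_{\alpha\beta}+\omega_{\beta\gamma}\qquad\text{on }U_{\alpha\beta\gamma}\neq\emptyset .
$$
Setting $\alpha=\beta=\gamma$ gives $\omega_{\alpha\alpha}=0$, and then $\gamma=\alpha$ gives $\omega_{\beta\alpha}=-\omega_{\alpha\beta}$, so $(\omega_{\alpha\beta})$ is a \v{C}ech $1$-cocycle for the cover $(U_\alpha)_{\alpha\in\Lambda}$ with values in the sheaf $\underline{\Omega^2}$ of smooth $2$-forms.

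Next I would split this cocycle explicitly. Choose a partition of unity $(\rho_\gamma)_{\gamma\in\Lambda}$ subordinate to $(U_\alpha)_{\alpha\in\Lambda}$, with $\operatorname{supp}\rho_\gamma\subset U_\gamma$ and the family locally finite, and set
$$
\omega_\alpha:=\sum_\gamma\rho_\gamma\,\omega_{\alpha\gamma}\ \in\ \Omega^2(U_\alpha),
$$
where each summand $\rho_\gamma\,\omega_{\alpha\gamma}$, a priori defined only on $U_{\alpha\gamma}$, is extended by zero to all of $U_\alpha$; this extension is smooth because $U_\alpha\cap\operatorname{supp}\rho_\gamma$ is a closed subset of $U_\alpha$ contained in the open set $U_{\alpha\gamma}$. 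On an overlap $U_{\alpha\beta}$ the cocycle identity applied to the triple $(\alpha,\gamma,\beta)$ gives $\omega_{\alpha\gamma}-\omega_{\beta\gamma}=\omega_{\alpha\gamma}+\omega_{\gamma\beta}=\omega_{\alpha\beta}$ wherever the terms are defined, so
$$
\omega_\alpha-\omega_\beta=\sum_\gamma\rho_\gamma\,(\omega_{\alpha\gamma}-\omega_{\beta\gamma})=\Bigl(\sum_\gamma\rho_\gamma\Bigr)\,\omega_{\alpha\beta}=\omega_{\alpha\beta},
$$
which is the claim. If the $\nabla_{\alpha\beta}$ are Hermitian the $\omega_{\alpha\beta}$ are purely imaginary, and taking the $\rho_\gamma$ real makes the $\omega_\alpha$ purely imaginary as well.

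The only delicate point is the first step: one must check carefully that squaring the compatibility relation really produces the additive relation $\omega_{\alpha\gamma}=\omega_{\alpha\beta}+\omega_{\beta\gamma}$ for the scalar curvature $2$-forms — namely that the cross terms in the curvature of the tensor product connection vanish and that the isomorphism $\mu_{\alpha\beta\gamma}$ drops out under the canonical trivialization of the endomorphism bundle. Once this bookkeeping is in place, the remainder is just the standard vanishing of $H^1$ of a fine sheaf, realized concretely by the partition of unity above, and I do not expect any genuine obstacle.
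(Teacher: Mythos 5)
Your proof is correct and follows essentially the same route as the paper: derive the \v{C}ech cocycle relation $\omega_{\alpha\gamma}=\omega_{\alpha\beta}+\omega_{\beta\gamma}$ and then split it using fineness of the sheaf of $2$-forms. You simply make explicit two things the paper compresses — the derivation of the cocycle identity by taking curvatures in the compatibility relation of Lemma \ref{descconnection}, and the concrete partition-of-unity realization of the vanishing of $H^1$ for a fine sheaf — but the substance is the same.
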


\begin{proof}
 The collection $\omega_{\alpha\beta}$  satisfies
$$
\omega_{\alpha\gamma} = \omega_{\alpha\beta} + \omega_{\beta\gamma}.
$$

This shows that $(\omega_{\alpha\beta})$ is a \v{C}ech cocycle and hence by the  acyclicity of the \v{C}ech complex of forms, there exist $(\omega_\alpha)$ such that
$$
\omega_{\alpha\beta} = \omega_\alpha - \omega_\beta.
$$
\end{proof}

We will say that  the collection $(\nabla_{\alpha \beta}, \omega_{\alpha})$ is a connection on the descent
datum $(U_\alpha, \maL_{\alpha\beta}, \mu_{\alpha\beta})$. 

 Connection on descent datum yields a connection on its restriction to a refinement
in an obvious manner; we will identify connection with its restriction. 
An isomorphism between  descent datum $(U_\alpha, \maL_{\alpha\beta}, \mu_{\alpha\beta})$  with connection
$(\nabla_{\alpha \beta}, \omega_{\alpha})$ and descent datum  $(U_\alpha, \maL_{\alpha\beta}', \mu_{\alpha\beta}')$
with connection $(\nabla_{\alpha \beta}', \omega_{\alpha}')$   is given by
$(S_{\alpha}, \lambda_{\alpha \beta}, \nabla_{\alpha} )$ where $(S_{\alpha}, \lambda_{\alpha \beta} ) $ is an isomorphsm
betweeween the descent data (without connections) and each
$\nabla_{\alpha}$ is a connection on
$S_{\alpha}$ satsfying the following conditions.  
Let $\pi_\alpha =\nabla_{\alpha}^2$ be the curvatures of these connections and 
let $\nabla_{\alpha}^{-1}$ be the dual connection on $S_{\alpha}^{-1}$.  
Then we require 
the following dentities (using the isomorphism 
$\maL_{\alpha \beta} = S_{\alpha}\otimes ( S_{\alpha}^{-1}\otimes \maL_{\alpha \beta} \otimes S_{\beta})\otimes S_{\beta}^{-1}$):
\begin{equation}\label{nabla'}
\nabla_{\alpha \beta} = (\lambda_{\alpha \beta})^* (\nabla_{\alpha} \otimes \id \otimes \id +\id \otimes \nabla_{\alpha \beta}' \otimes \id +\id \otimes \id \otimes \nabla_{\beta}^{-1})
\end{equation}
and
\begin{equation}\label{omega'}
\omega_{\alpha} =\omega_{\alpha}'+\pi_{\alpha}.
\end{equation}

If $s =(S_{\alpha}, \lambda_{\alpha \beta}, \nabla_{\alpha} )$ and  $s' =
(S_{\alpha}', \lambda_{\alpha \beta}', \nabla_{\alpha}' )$ 
are  two isomorphisms between the descent data with connections 
as above, the $2$-morphisms between $s$ and $s'$ are the same as the $2$ morphisms
between $(S_{\alpha}, \lambda_{\alpha \beta})$ and  $
(S_{\alpha}', \lambda_{\alpha \beta}')$. 
 
Assume $\maL$ is a gerbe given by  a  collection of descent data $\maD_i$, $i \in A$,
isomorphisms $s_{ij}\colon \maD_j \to \maD_i$ for each $i, j \in A$ and $2$-morphisms $\nu_{ijk} \colon
s_{ij} s_{jk} \to s_{ik}$.
A connection on a gerbe $\maL$ is a lift of each $\maD_i$ to a descent datum with connection $\widetilde{\maD_i}$
and each $s_{ij}$ to an isomorphsm $\widetilde{s_{ij}} \colon \widetilde{\maD_j} \to \widetilde{\maD_i}$.
 An easy argument using the Lemma \ref{descconnection} shows that on a given gerbe always exists a connection.

\begin{lemma}\label{defomega} Let $\maL$ be a gerbe represented by a descent
datum $(U_\alpha, \maL_{\alpha\beta}, \mu_{\alpha\beta\gamma})$.
Choose a connection on $\maL$ represented by a connection $(\nabla_{\alpha \beta}, \omega_{\alpha})$
on the descent datum.
\begin{enumerate}
\item There exists a well-defined closed form $\varOmega\in \Omega^3(M)$ --
curvature $3$-form of the connection --
such that $\varOmega|_{U_{\alpha}}=\frac{d\omega_{\alpha}}{2\pi i}$.

\item  Let $(\nabla_{\alpha \beta}', \omega_{\alpha}')$ be  another
connection on $\maL$ and let $\varOmega'$ be the corresponding curvature $3$-form.
Then there exists a canonical  $\eta \in \Omega^2(M)/d\Omega^1(M)$ such that
$\varOmega'= \varOmega+ d\eta$.

\item Let $(\nabla_{\alpha \beta}, \omega_{\alpha})$ $(\nabla_{\alpha \beta}', \omega_{\alpha}')$,
$(\nabla_{\alpha \beta}'', \omega_{\alpha}'')$ be $3$  connections on $\maL$ with the corresponding
curvature $3$-forms $\varOmega$, $\varOmega'$, $\varOmega''$. Let $\eta, \eta', \eta'' \in
\Omega^2(M)/d\Omega^1(M)$ be the canonical elements constructed above such that $
\varOmega'-\varOmega=d \eta$, $\varOmega''-\varOmega'=d\eta'$, $\varOmega''-\varOmega=d\eta''$.
Then $\eta''=\eta+\eta'$.
\end{enumerate}
\end{lemma}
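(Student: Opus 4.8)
The plan is to prove the three parts in order, reusing in (2) and (3) the construction set up in the preceding part. For (1), I would argue as follows. Each $\omega_{\alpha\beta}=\nabla_{\alpha\beta}^2$ is the curvature of a connection on a line bundle, so the Bianchi identity gives $d\omega_{\alpha\beta}=0$; combined with $\omega_{\alpha\beta}=\omega_\alpha-\omega_\beta$ on $U_{\alpha\beta}$ this forces $d\omega_\alpha=d\omega_\beta$ there, so the locally defined closed $3$-forms $\tfrac{1}{2\pi i}d\omega_\alpha$ agree on overlaps and glue to a global closed form $\varOmega$. To see that $\varOmega$ does not depend on which descent-datum-with-connection $\widetilde{\maD_i}$ representing the given connection on $\maL$ is used, I would invoke \eqref{omega'}: two such representatives are related by an isomorphism $(S_\alpha,\lambda_{\alpha\beta},\nabla_\alpha)$, under which $\omega_\alpha\mapsto\omega_\alpha+\pi_\alpha$ with $\pi_\alpha=\nabla_\alpha^2$ again a line-bundle curvature, hence $d\pi_\alpha=0$ and $\varOmega$ is unchanged; independence of the cover is automatic since restriction of forms commutes with $d$.

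For (2), I would first reduce to the situation in which both connections are carried by the same descent datum $(U_\alpha,\maL_{\alpha\beta},\mu_{\alpha\beta\gamma})$ on one cover — this is possible because a connection on $\maL$ by definition lifts every $\maD_i$, so one may fix a single common index (passing to a refinement if needed). On $U_{\alpha\beta}$ the two connections on the same line bundle $\maL_{\alpha\beta}$ differ by a $1$-form $a_{\alpha\beta}:=\nabla'_{\alpha\beta}-\nabla_{\alpha\beta}\in\Omega^1(U_{\alpha\beta})$, and subtracting the two instances of the compatibility relation of Lemma \ref{descconnection} yields $a_{\alpha\gamma}=a_{\alpha\beta}+a_{\beta\gamma}$ on $U_{\alpha\beta\gamma}$; thus $(a_{\alpha\beta})$ is a \v{C}ech $1$-cocycle for the (fine, hence acyclic) sheaf of $1$-forms and there exist $a_\alpha\in\Omega^1(U_\alpha)$ with $a_{\alpha\beta}=a_\alpha-a_\beta$. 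Since $(\nabla'_{\alpha\beta})^2=\omega_{\alpha\beta}+da_{\alpha\beta}$, the local $2$-forms $\omega'_\alpha-\omega_\alpha-da_\alpha$ then agree on overlaps and patch to a global $2$-form whose differential equals $2\pi i(\varOmega'-\varOmega)$; dividing by $2\pi i$ gives $\eta$ with $\varOmega'=\varOmega+d\eta$. For canonicity I would note that the only freedom in the construction is replacing $(a_\alpha)$ by $(a_\alpha+c)$ for a global $c\in\Omega^1(M)$, which changes $\eta$ by the exact form $dc$, together with the choice of common representative $\maD_i$, whose irrelevance follows from \eqref{nabla'} and \eqref{omega'} exactly as in (1); hence $\eta$ is well defined in $\Omega^2(M)/d\Omega^1(M)$. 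It is worth emphasizing that it is this explicit construction, rather than an abstract appeal to de Rham's theorem applied to the closed form $\varOmega'-\varOmega$, that pins $\eta$ down modulo $d\Omega^1(M)$ rather than merely modulo closed $2$-forms.

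For (3), I would run the construction of (2) simultaneously for the three pairs, choosing $(a_\alpha)$ with $\nabla'_{\alpha\beta}-\nabla_{\alpha\beta}=a_\alpha-a_\beta$ and $(a'_\alpha)$ with $\nabla''_{\alpha\beta}-\nabla'_{\alpha\beta}=a'_\alpha-a'_\beta$. Then $(a_\alpha+a'_\alpha)$ is an admissible choice of splitting for the pair $(\varOmega,\varOmega'')$, and with these choices the global $2$-form representatives satisfy the pointwise identity $\omega''_\alpha-\omega_\alpha-d(a_\alpha+a'_\alpha)=(\omega''_\alpha-\omega'_\alpha-da'_\alpha)+(\omega'_\alpha-\omega_\alpha-da_\alpha)$, so that $\eta''=\eta+\eta'$ on the nose; since by (2) each of $\eta,\eta',\eta''$ is well defined in $\Omega^2(M)/d\Omega^1(M)$ independently of these auxiliary choices, the identity $\eta''=\eta+\eta'$ holds in $\Omega^2(M)/d\Omega^1(M)$.

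I expect the only genuinely delicate point to be the canonicity assertion in (2): verifying that $\eta$ is insensitive not just to the splitting $(a_\alpha)$ but also to the choice of which descent-datum-with-connection one uses to represent each of the two connections on $\maL$, and to the common refinement. This is precisely the place where the compatibility identities \eqref{nabla'} and \eqref{omega'} must be used carefully, and where one must keep the $2\pi i$ normalization consistent throughout; once this is in hand, parts (1) and (3) are essentially formal consequences of the same bookkeeping.
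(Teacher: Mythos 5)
Your proof is correct and follows essentially the same route as the paper: in (1) glue $\tfrac{1}{2\pi i}d\omega_\alpha$ using closedness of the $\omega_{\alpha\beta}$; in (2) split the cocycle $\nabla'_{\alpha\beta}-\nabla_{\alpha\beta}$ over the fine sheaf of $1$-forms and patch $\tfrac{1}{2\pi i}(\omega'_\alpha-\omega_\alpha-d\delta_\alpha)$, checking that a different splitting changes $\eta$ only by an exact form; and in (3) add the two splittings. Your explicit treatment of (3) and your remark that the construction (not abstract de Rham) is what pins $\eta$ down modulo $d\Omega^1(M)$ match the spirit of the paper, which leaves (3) to the reader.
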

The $3$-form $\varOmega$ is a  de Rham representative of the Dixmier-Douady class of the gerbe.
\begin{proof}
We use notations of the previous Lemma.
For the first part notice that since $d\omega_{\alpha\beta}=0$ for any $\alpha, \beta$ we see that
$$
d\omega_\alpha|_{U_{\alpha\beta}} = d\omega_\beta|_{U_{\alpha\beta}}
$$
which shows the existence of $\varOmega$; it is clearly closed.
$\varOmega$ does not change if one restricts the descent datum to a refinement.
The equation \eqref{omega'} implies that if one uses  a different descent datum  
for the same gerbe one obtains the same $3$-curvature form.

We proceed to the proof of  the second part. Let $(\nabla_{\alpha \beta}, \omega_{\alpha})$ and
$(\nabla_{\alpha \beta}', \omega_{\alpha}')$ be two connections on the descent datum
 $(U_\alpha, \maL_{\alpha\beta}, \mu_{\alpha\beta\gamma})$.
Set $\delta_{\alpha \beta}=\nabla_{\alpha\beta}'-\nabla_{\alpha\beta}\in \Omega^1(U_{\alpha \beta})$. Choose $\delta_{\alpha} \in \Omega^1(U_{\alpha})$ such that $\delta_{\alpha \beta}=\delta_{\alpha}-\delta_{\beta}$ on $U_{\alpha\beta}$.
Then $\omega_{\alpha \beta}'-\omega_{\alpha \beta} = d\delta_{\alpha\beta} = d\delta_{\alpha}-d \delta_{\beta}$. Hence $\omega_{\alpha}'-\omega_{\alpha} -d \delta_{\alpha}= \omega_{\beta}'-\omega_{\beta} -d \delta_{\beta}$ on $U_{\alpha\beta}$.
Therefore there exists $\eta \in \Omega^2(M)$
such that $\eta|_{U_{\alpha}}=\frac{1}{2\pi i}(\omega_{\alpha}'-\omega_{\alpha} -d \delta_{\alpha})$.
Then $\varOmega'-\varOmega=d\eta$.
The formula for $\eta$ given above depends on the choice of $\delta_{\alpha}$.
 If $\bar{\delta}_{\alpha}$ is a different such choice, then $\bar{\delta}_{\alpha} - \delta_{\alpha}=\epsilon|_{U_{\alpha}}$ for some $\epsilon \in \Omega^1(M)$.
 Then  $\bar{\eta}=
\eta - d\epsilon$.
 Hence the class of $\eta \in \Omega^2(M)/d\Omega^1(M)$ does not depend on the choice made.
It is easy to see that $\eta$ does not depend on the choice of the particular descent datum used.

The verification of the third statement is straightforward and is left to the reader.

\end{proof}

\subsection{Twisted cohomology}
For a smooth manifold $M$ let $\varOmega \in \Omega^3(M)$ be a closed $3$-form. Denote
by $u$ a formal variable of degree $-2$. The twisted de Rham complex is defined as the
complex $\Omega^*(M)[u]$ with the differential $d_{\varOmega}=ud +u^2\varOmega\wedge \cdot$.
Note that if $\varOmega'=\varOmega+d\eta $ is cohomologous to $\varOmega$ then the complexes $\left(\Omega^*(M)[u], d_{\varOmega} \right)
$  and $\left(\Omega^*(M)[u], d_{\varOmega'} \right)
$ are isomorphic via the isomorphism
\begin{equation}\label{defI}
I_{\eta} \colon \xi \mapsto e^{-u\eta}\wedge \xi
\end{equation}

\begin{lemma}\label{Imap}
The map induced by $I_{\eta}$ on cohomology depends only on the class of $\eta$ in
$\Omega^2(M)/\left(d\Omega^1(M)\right)$.
\end{lemma}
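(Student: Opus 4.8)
The plan is to show that if $\eta$ and $\eta'$ differ by an exact form, say $\eta' = \eta + d\beta$ with $\beta \in \Omega^1(M)$, then the two cochain maps $I_\eta$ and $I_{\eta'}$ are chain homotopic as maps $\left(\Omega^*(M)[u], d_\varOmega\right) \to \left(\Omega^*(M)[u], d_{\varOmega'}\right)$, and hence induce the same map on cohomology. First I would record the relation between the two maps: since $\eta' = \eta + d\beta$, we have $e^{-u\eta'}\wedge \xi = e^{-u\, d\beta}\wedge e^{-u\eta}\wedge \xi = I_\eta\!\left(e^{-u\, d\beta}\wedge \xi\right)$, so it suffices to prove that multiplication by $e^{-u\, d\beta}$ is chain homotopic to the identity on the source complex $\left(\Omega^*(M)[u], d_\varOmega\right)$. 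This reduces the lemma to a statement about a single complex, which is cleaner to handle.

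The key computation is then the following. Consider the operator $h$ on $\Omega^*(M)[u]$ given by $h(\xi) = -u\beta \wedge \xi$ (or, more precisely, a $u$-linear extension of wedging with $\beta$), and observe that the ordinary Cartan-type identity $d_\varOmega(\beta\wedge\xi) + \beta \wedge d_\varOmega(\xi)$ produces $u\,d\beta\wedge\xi$ (the $\varOmega$-terms cancel because $\varOmega$ is of odd degree and $\beta\wedge\varOmega = \varOmega\wedge\beta$ with the appropriate sign, so $d_\varOmega$ acts as an odd derivation with respect to wedge product). Thus $[d_\varOmega, \, u\beta\wedge\cdot\,] = u\,d\beta \wedge \cdot$ up to sign. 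Exponentiating, one gets that $e^{-u\,d\beta}\wedge\cdot - \id = d_\varOmega \circ H + H \circ d_\varOmega$ where $H = \left(\int_0^1 e^{-t u\, d\beta}\,dt\right)\circ(-u\beta\wedge\cdot)$, which makes sense because $e^{-tu\,d\beta}$ is a polynomial in $u$ (of bounded degree, since $d\beta$ is a fixed $2$-form on a finite-dimensional manifold) and hence $H$ preserves $\Omega^*(M)[u]$. This exhibits the desired chain homotopy; composing with $I_\eta$ gives a chain homotopy between $I_{\eta'}$ and $I_\eta$.

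The main obstacle I anticipate is purely bookkeeping: getting the signs right in the graded Leibniz rule for $d_\varOmega$ — one must treat $u$ as having even total degree $-2$ and $\varOmega$ as odd, verify that $\varOmega\wedge\cdot$ anticommutes correctly with $\beta\wedge\cdot$, and check that the formal power series manipulations $e^{-u\eta'} = e^{-u\,d\beta}e^{-u\eta}$ and the integral formula for $H$ are legitimate in $\Omega^*(M)[u]$ (no convergence issue arises since everything truncates, as $\Omega^*(M)$ vanishes above degree $\dim M$). A secondary point worth stating explicitly is that $H$ indeed lands in $\Omega^*(M)[u]$ and not merely $\Omega^*(M)[[u]]$; this follows from the nilpotence of $d\beta\wedge\cdot$ on the finite-dimensional exterior algebra. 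Once the homotopy is in place, the statement on cohomology is immediate.
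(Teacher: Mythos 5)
Your approach is essentially the paper's: both establish the lemma by exhibiting an explicit chain homotopy between $I_{\eta}$ and $I_{\eta+d\beta}$. You organize the computation slightly differently, and in a nice way, by factoring $I_{\eta+d\beta} = I_{\eta}\circ(e^{-ud\beta}\wedge\cdot)$ so that the problem reduces to showing multiplication by $e^{-ud\beta}$ is homotopic to the identity on the single complex $\bigl(\Omega^*(M)[u], d_{\varOmega}\bigr)$; the paper writes down the homotopy for $I_{\eta+d\epsilon}-I_{\eta}$ directly. The underlying computation (the graded Leibniz identity for $d_{\varOmega}$, exponentiated via an integral over $t\in[0,1]$) is the same, and your observation that everything stays in $\Omega^*(M)[u]$ rather than $\Omega^*(M)[[u]]$ by nilpotence is a correct and worthwhile point.

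There is, however, a small but real slip in your bookkeeping. You correctly state that
$d_{\varOmega}(\beta\wedge\xi)+\beta\wedge d_{\varOmega}(\xi)=u\,d\beta\wedge\xi$, but in the next sentence you assert $[d_{\varOmega},\,u\beta\wedge\cdot\,]=u\,d\beta\wedge\cdot$ and accordingly build the homotopy with $-u\beta\wedge\cdot$. The extra $u$ is inconsistent with the identity you just wrote: with $g(t)=e^{-tud\beta}$ and $h(t)=-\beta\,g(t)\wedge\cdot$ (no $u$), one gets $d_{\varOmega}\circ h(t)+h(t)\circ d_{\varOmega}=-u\,d\beta\,g(t)\wedge\cdot=g'(t)\wedge\cdot$, so the correct homotopy is
$H=-\beta\Bigl(\int_0^1 e^{-tud\beta}\,dt\Bigr)\wedge\cdot$. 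This also fixes the total degree: $H$ should have degree $+1$ (a homotopy between degree-$0$ chain maps relative to the degree-$(-1)$ differential $d_{\varOmega}$), and $-\beta(\ldots)\wedge\cdot$ does, whereas $-u\beta(\ldots)\wedge\cdot$ has degree $-1$. With that correction your argument is complete.
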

\begin{proof}
 Indeed, for $\epsilon \in \Omega^1(M)$ define
$h_{\epsilon} \colon \left( \Omega^*(M)[u]\right)_{\bullet} \to \left( \Omega^*(M)[u]\right)_{\bullet-1}$
by
\[h_{\epsilon}= u\epsilon \left( \sum_k \frac{(u\eta)^k}{(k+1)!}\right)~\wedge~\cdot.
\]
Then
\[I_{\eta+d\epsilon}-I_{\eta}=h_{\epsilon}\circ d_{\varOmega}+d_{\varOmega'}\circ h_{\epsilon}
\]
and therefore the maps $I_{\eta}$ and $I_{\eta+d\epsilon}$ are chain homotopic.
\end{proof}

The identity $d_{\varOmega+\varOmega'} (\xi \wedge \eta) =  d_{\varOmega}\xi \wedge \eta + (-1)^{|\xi|} \xi \wedge d_{\varOmega'}  \eta$ implies that the product of forms induces the product $H^{\bullet}_{\varOmega}(M) \otimes H^{\bullet}_{\varOmega'}(M) \to H^{\bullet}_{\varOmega+\varOmega'}(M)$ and in particular endows
 $H^{\bullet}_{\varOmega}(M)$ with the structure of   $H^{\bullet}(M)[u]$-module.

We will also need to consider the following situation. Let $\pi \colon M \to B$ be an oriented fibration with compact fibers.
Then we have an integration along the fibers map $ \int_{M|B}\colon \Omega^*(M) \to \Omega^{*-k}(B)$, $k=\dim M - \dim B$. Let $\varOmega \in \Omega^3(B)$ be a closed form. Then $ \int_{M|B} d \eta = d \int_{M|B} \eta$, $\int_{M|B} \pi^*\varOmega \wedge \eta = \varOmega\wedge \int_{M|B} \eta$ and therefore
\[
\int_{M|B} d_{\pi^*\varOmega} \eta = d_{\varOmega} \int_{M|B} \eta.
\]
Hence we obtain a chain map $\int_{M|B} \colon \left(\Omega^*(M), d_{\pi^* \varOmega} \right)_{\bullet} \to
\left(\Omega^*(B), d_{\varOmega} \right)_{\bullet-k}$.

Let $\maL$ be a gerbe. The choice of connection defines a closed  $3$-form $\varOmega$.
We can therefore consider the complex $\left(\Omega^*(M)[u], d_{\varOmega} \right)$.
Different choice of connection leads to a different complex $\left(\Omega^*(M)[u], d_{\varOmega'} \right)$.
Lemma \ref{defomega} however implies that there exists however a canonical isomorphism
$I$ of the homologies of these complexes. We denote this homology $H^{\bullet}_{\maL}(B)$


\subsection{Twisted bundles }

\begin{definition}
A descent datum for a twisted vector bundle $\maE$ consists of a descent datum
$(U_{\alpha}, \maL_{\alpha\beta}, \mu_{\alpha\beta\gamma})$ for a gerbe $\maL$ together with a collection
$(\maE_\alpha\to U_\alpha)_{\alpha\in \Lambda}$ of vector bundles  and  a collection of vector bundle isomorphisms $\varphi_{\alpha\beta}: \maE_\alpha\otimes \maL_{\alpha\beta} \cong \maE_\beta$ such that for every $\alpha$, $\beta$, $\gamma$ the following diagram commutes

\[
\begin{CD}  
\maE_\alpha\otimes \maL_{\alpha\beta}\otimes \maL_{\beta\gamma} @>\id \otimes \mu_{\alpha\beta\gamma}>>
\maE_\alpha \otimes \maL_{\alpha\gamma}\\
@V\varphi_{\alpha\beta}\otimes \id VV    @VV\varphi_{\alpha\gamma}V \\
\maE_\beta\otimes \maL_{\beta\gamma} @>\ \ \varphi_{\beta\gamma}\ \ >> \maE_{\gamma}
\end{CD}
\]

\end{definition}

Restriction of the descent datum for $\maE$ to a refinement is given by the restriction of the descent datum
for $\maL$ together with restriction of the vector bundles $\maE_{\alpha}$.

\begin{definition}
An (iso)morphism between two descent data $( U_{\alpha}, \maL_{\alpha\beta}, \mu_{\alpha\beta\gamma}, \maE_{\alpha}, \varphi_{\alpha \beta})$ and
$(  U_{\alpha}', \maL_{\alpha\beta}', \mu_{\alpha\beta\gamma}', \maE_{\alpha}', \varphi_{\alpha \beta}')$ is given by the collection  $(\rho_{\alpha}, S_{\alpha}, \lambda_{\alpha \beta})$ where $( S_{\alpha}, \lambda_{\alpha \beta})$ is an isomorphism between
$(  U_{\alpha}, \maL_{\alpha\beta}, \mu_{\alpha\beta\gamma})$ and
$( U_{\alpha}', \maL_{\alpha\beta}', \mu_{\alpha\beta\gamma}')$ and $\rho_{\alpha} \colon \maE_{\alpha} \otimes S_{\alpha} \to \maE_{\alpha}'$ is a collection of (iso)morphisms such that the diagram
\[
\begin{CD}
\maE_{\alpha} \otimes S_{\alpha}\otimes S_{\alpha}^{-1}\otimes \maL_{\alpha \beta} \otimes S_{\beta}
@>\varphi_{\alpha \beta} \otimes \id>>
\maE_{\beta} \otimes S_{\beta}\\
@V\rho_{\alpha}\otimes\lambda_{\alpha \beta }VV @VV\rho_{\beta}V\\
\maE_{\alpha }' \otimes \maL_{\alpha \beta }' @>\ \varphi_{\alpha \beta \gamma}'\ >> \maE_{\beta}'
\end{CD}
\]

commutes.
\end{definition}
An isomorphism  between two descent data on two different covers is defined as an isomorphism
between their restriction on a common refinement. A $2$-morphism between two isomorphisms is
the $2$-morphisms between the corresponding isomorphisms of the gerbe descent data.

A twisted bundle is then defined as an equivalence  class
of descent data of twisted vector bundles. ``Forgetting'' the bundle data we obtain from the descent datum for a twisted vector
bundle a descent datum for a gerbe, and the same applies to morphisms and $2$-morphisms.
We say that
$\maE$ is an $\maL$-twisted vector bundle if ``forgetting'' the bundle data one obtains the equivalence class
of the gerbe descent data defining $\maL$.


Assume now that the gerbe $\maL$ is unitary. An Hermitian descent datum for $\maE$  consists of a unitary descent datum $(U_{\alpha}, \maL_{\alpha \beta}, \mu_{\alpha \beta \gamma})$ for $\maL$   and a collection $h_{\alpha}$ of metrics on $\maE_{\alpha}$ such that the maps $\varphi_{\alpha \beta}$ are isometries. One obtains a notion of isomorphism of Hermitian descent data by requiring $\rho_{\alpha}$ to be isometries. An Hermitian twisted bundle is then an equivalence class of Hermitian descent data.

Given a gerbe $(U_{\alpha}, \maL_{\alpha\beta}, \mu_{\alpha\beta\gamma})$ on $M$, it is well known that 
 (finite dimensional) twisted vector bundles exist if and only if the gerbe is torsion 
(see e.g. \cite{DK, Karoubi} and references therein).


\begin{lemma} \label{extend} Let $\maL$ be a gerbe on $M$ and $\maE$ an $\maL$-twisted bundle. Let $(U_{\alpha}, \maL_{\alpha \beta},
\mu_{\alpha \beta \gamma})$ be a descent datum for $\maL$. Then there exists a descent datum for $\maE$ isomorphic
to the one of the form
$(U_{\alpha}, \maL_{\alpha \beta}, \mu_{\alpha \beta \gamma}, \maE_{\alpha}, \varphi_{\alpha \beta})$.
\end{lemma}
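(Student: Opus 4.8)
The plan is to reduce the statement to the fact, built into the definition of a gerbe, that any two gerbe descent data defining $\maL$ are related by an isomorphism, and then to transport the bundle data of $\maE$ along such an isomorphism. Recall that $\maE$, being an $\maL$-twisted bundle, is represented by \emph{some} descent datum $(V_i, \maL'_{ij}, \mu'_{ijk}, \maE'_i, \varphi'_{ij})$ whose underlying gerbe descent datum $\maD'=(V_i, \maL'_{ij}, \mu'_{ijk})$ belongs to the maximal family of descent data defining $\maL$. The given datum $\maD=(U_\alpha, \maL_{\alpha\beta}, \mu_{\alpha\beta\gamma})$ belongs to the same family, so the gerbe structure supplies an isomorphism $\maD'\to\maD$. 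Since an isomorphism of descent data is, by definition, an isomorphism of their restrictions to a common refinement, I would fix an open cover $(W_a)$ refining both $(V_i)$ and $(U_\alpha)$, with refinement maps, and write this isomorphism in the form $(S_a,\lambda_{ab})$ with $S_a\to W_a$ line bundles and $\lambda_{ab}\colon S_a^{-1}\otimes\maL'_{ab}\otimes S_b\to\maL_{ab}$ line bundle isomorphisms compatible with $\mu'$ and $\mu$ in the sense of the commuting square in the definition of an isomorphism of gerbe descent data. Here I abbreviate $\maL_{ab}$, $\maL'_{ab}$, $\maE'_a$, $\mu_{abc}$, $\mu'_{abc}$, $\varphi'_{ab}$ for the appropriate restrictions of the pulled-back objects to $W_a$, $W_{ab}$, $W_{abc}$.

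Next I would define the transported bundle data. Set $\maE_a:=\maE'_a\otimes S_a$ over $W_a$, and let $\varphi_{ab}\colon\maE_a\otimes\maL_{ab}\to\maE_b$ be the composite
\begin{multline*}
\maE'_a\otimes S_a\otimes\maL_{ab}
\xrightarrow{\ \id\otimes\id\otimes\lambda_{ab}^{-1}\ }
\maE'_a\otimes S_a\otimes S_a^{-1}\otimes\maL'_{ab}\otimes S_b\\
\longrightarrow
\maE'_a\otimes\maL'_{ab}\otimes S_b
\xrightarrow{\ \varphi'_{ab}\otimes\id\ }
\maE'_b\otimes S_b,
\end{multline*}
where the middle arrow is the evaluation $S_a\otimes S_a^{-1}\to\O$. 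One then has to verify the cocycle (associativity) identity for $(\maE_a,\varphi_{ab})$ over $W_{abc}$ relative to $(\maL_{ab},\mu_{abc})$. This is a diagram chase that feeds the cocycle identity for $(\maE'_a,\varphi'_{ab})$ relative to $(\maL'_{ab},\mu'_{abc})$ into the compatibility square relating $\lambda$, $\mu$, $\mu'$. I expect this check to be the main — indeed essentially the only — piece of genuine work in the proof; everything else is formal.

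Finally, to see that $(W_a,\maL_{ab},\mu_{abc},\maE_a,\varphi_{ab})$ really represents $\maE$, I would exhibit an isomorphism of twisted-bundle descent data between it and the original $(W_a,\maL'_{ab},\mu'_{abc},\maE'_a,\varphi'_{ab})$. For the underlying gerbe isomorphism take the inverse of $(S_a,\lambda_{ab})$, i.e. $(S_a^{-1},\bar\lambda_{ab})$ with $\bar\lambda_{ab}\colon S_a\otimes\maL_{ab}\otimes S_b^{-1}\to\maL'_{ab}$ obtained by rearranging $\lambda_{ab}^{-1}$ and contracting; for the bundle part take $\rho_a\colon\maE_a\otimes S_a^{-1}=\maE'_a\otimes S_a\otimes S_a^{-1}\to\maE'_a$ to be the evaluation. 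That the square in the definition of a morphism of twisted-bundle descent data commutes is again a routine diagram chase using the definition of $\varphi_{ab}$. Since the paper does not distinguish a descent datum from its restriction to a refinement, the datum just constructed over $(W_a)$ is, under this convention, of the announced form $(U_\alpha,\maL_{\alpha\beta},\mu_{\alpha\beta\gamma},\maE_\alpha,\varphi_{\alpha\beta})$ with the prescribed gerbe part, which is what the lemma asserts. (Passing to a refinement of $(U_\alpha)$ is unavoidable in general, exactly as in Lemma~\ref{descconnection} and the surrounding discussion of descent data.)
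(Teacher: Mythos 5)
Your proposal and the paper start from the same place --- choose some descent datum $(V_i, L_{ij}, m_{ijk}, E_i, r_{ij})$ for $\maE$ and pass to a refinement on which its gerbe part is related to the given $(U_\alpha, \maL_{\alpha\beta}, \mu_{\alpha\beta\gamma})$ by an isomorphism $(S_i, \lambda_{ij})$ --- but you then stop short of the actual content of the lemma. Setting $\maE_a := \maE'_a \otimes S_a$ on the common refinement $(W_a)$ and invoking the ``do not distinguish a datum from its restriction'' convention does not produce what is asserted: the lemma needs bundles $\maE_\alpha$ over the \emph{full} open sets $U_\alpha$, compatible with the \emph{unrestricted} $\maL_{\alpha\beta}$, $\mu_{\alpha\beta\gamma}$. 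The convention governs how one compares data on different covers; it does not convert a datum on a refinement into one on $(U_\alpha)$ unless you actually manufacture bundles over the $U_\alpha$. And that last step is not cosmetic. In the application in Section \ref{hotwbu} the relevant cover of the total space is $(\pi^{-1}U_\alpha)$; an arbitrary refinement of $(\pi^{-1}U_\alpha)$ is not of the form $(\pi^{-1}U'_\alpha)$ for a cover of $B$, so the restricted datum your argument produces would not in general be a horizontally twisted descent datum, and the surjectivity of $Tw_h(\maL)\to Tw(\pi^*\maL)$ would not follow.

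The missing idea is the gluing step, which is the genuine work in the paper's proof and is entirely absent from yours. Over $V_i \cap U_\alpha$ the paper sets $\maE_\alpha^{(i)} := E_i \otimes S_i \otimes \maL_{\varrho(i)\alpha}$; note the extra factor $\maL_{\varrho(i)\alpha}$, taken from the given gerbe descent datum, which is what ties the local piece to the index $\alpha$ and which your $\maE'_a\otimes S_a$ does not contain. The isomorphisms $r_{ij}$, $\lambda_{ij}$ and $\mu_{\varrho(i)\varrho(j)\alpha}$ then combine into gluing maps $\psi_\alpha^{(ij)}$ on $V_{ij}\cap U_\alpha$ satisfying the cocycle identity $\psi_\alpha^{(ik)} = \psi_\alpha^{(ij)}\circ\psi_\alpha^{(jk)}$, and one glues the $\maE_\alpha^{(i)}$ into a single bundle $\maE_\alpha$ over all of $U_\alpha$. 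Only after this do the maps $\id\otimes\mu_{\varrho(i)\alpha\beta}$ descend to $\varphi_{\alpha\beta}\colon\maE_\alpha\otimes\maL_{\alpha\beta}\to\maE_\beta$. Your remark that ``passing to a refinement of $(U_\alpha)$ is unavoidable'' is precisely the claim the lemma refutes; without the gluing construction, your proposal establishes a weaker statement than the one needed.
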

\begin{proof}
Let $(V_{i}, L_{i j}, m_{ijk}, E_{i}, r_{ij}), i,j,k \in I$ be a descent datum for $\maE$. Without a
loss of generality we may assume that $\{V_i\}$ is refinement of $\{U_{\alpha}\}$ given by the map  $\varrho: I\to \Lambda$ and that there exists an isomorphism $(S_i, \lambda_{i j}) $ between $(V_{i}, L_{i j}, m_{ijk} )$
and restriction of $(U_{\alpha}, \maL_{\alpha\beta}, \mu_{\alpha\beta\gamma})$ to $\{V_i\}$.

We define the vector bundle $\maE_\alpha$ as follows. On every non empty open set $U_\alpha\cap V_i$, we set
$$
\maE_\alpha^{(i)} := E_i \otimes   S_i \otimes \maL_{\varrho(i)\alpha}.
$$
Notice that $V_i\cap U_\alpha\subset U_{\varrho(i)\alpha}$ so that this definition makes sense as a vector bundle over $V_i\cap U_\alpha$. Moreover, if $(i,j)\in I^2$ is such that $V_{ij}\cap U_\alpha \not = \emptyset$, then we have a  bundle isomorphism
$$
\psi_\alpha^{(ij)}\colon\maE_\alpha^{(j)}|_{V_{ij}\cap U_\alpha} \longrightarrow \maE_\alpha^{(i)}|_{V_{ij}\cap U_\alpha}.
$$
defined by the composition
\begin{multline*}
E_j \otimes   S_j \otimes \maL_{\varrho(j)\alpha} \overset{\varphi_{ij}^{-1}\otimes \id}\longrightarrow E_i \otimes L_{ij} \otimes   S_j \otimes \maL_{\varrho(j)\alpha} \longrightarrow E_i \otimes   S_i \otimes   S_i^{-1} \otimes L_{ij} \otimes   S_j \otimes \maL_{\varrho(j)\alpha}\\ \overset{\id \otimes \lambda_{ij} \otimes \id} \longrightarrow
E_i \otimes   S_i \otimes \maL_{\varrho(i)\varrho(j)} \otimes \maL_{\varrho(j)\alpha} \overset{\id \otimes \mu_{\varrho(i) \varrho(j) \alpha}}\longrightarrow
E_i \otimes   S_i  \otimes \maL_{\varrho(i)\alpha}
\end{multline*}

It is easy to see  that
$$
\psi_\alpha^{(ik)} = \psi_\alpha^{(ij)} \circ \psi_\alpha^{(jk)}.
$$
So, we can glue the bundles $(\maE_\alpha^{(i)})_i$ for $V_i\cap U_\alpha\not=\emptyset$ together and form a vector bundle $\maE_\alpha$ over $U_\alpha$. More precisely
$$
\maE_\alpha := \left(\amalg_{V_i\cap U_\alpha\not=\emptyset} \maE_\alpha^{(i)}\right) / \{\psi_{\alpha}^{(ij)}\}.
$$
Next we introduce for $i\in I$ with $V_i\cap U_{\alpha\beta}\not=\emptyset$,
$$
\varphi_{\alpha\beta}^{(i)} :=   \id \otimes \mu_{\varrho(i)\alpha\beta} \colon \maE_\alpha^{(i)}\otimes \maL_{\alpha\beta} \rightarrow \maE_\beta^{(i)}.
$$
When $V_{ij}\cap U_{\alpha\beta} \not=\emptyset$, a straightforward inspection shows again that the following diagram commutes

\[
\begin{CD} 
\maE_\alpha^{(i)}\otimes \maL_{\alpha\beta} 
@>\ \ \varphi^{(i)}_{\alpha \beta}\ \ >>
\maE_\beta^{(i)}\\
@V\psi_{\alpha}^{(ji)}\otimes \id VV @VV\psi_{\beta}^{(ji)} V \\
\maE_\alpha^{(j)}\otimes \maL_{\alpha\beta}
@>\ \ \varphi^{(j)}_{\alpha \beta}\ \ >>
\maE_\beta^{(j)}
\end{CD}
\]

Therefore, the isomorphisms $\varphi_{\alpha\beta}^{(i)}$ induce an isomorphism
$$
\varphi_{\alpha\beta}: \maE_\alpha\otimes \maL_{\alpha\beta} \rightarrow \maE_\beta.
$$
We leave it to the reader to show that   $(U_{\alpha}, \maL_{\alpha\beta}, \mu_{\alpha\beta\gamma}, \maE_{\alpha}, \varphi_{\alpha \beta})$ is a descent datum for a twisted  vector bundle isomorphic to $(V_{i}, L_{i j}, m_{ijk}, E_{i}, r_{ij})$.


\end{proof}

We now fix a gerbe $\maL$ with a descent datum $(U_\alpha, \maL_{\alpha\beta}, \mu_{\alpha\beta\gamma})$ on the smooth manifold $M$ together with a twisted vector bundle $\maE$ represented by $(\maE_\alpha, \varphi_{\alpha\beta})$. We denote by $\maA_\alpha$ the collection of bundles of algebras
$$
\maA_\alpha := \End( \maE_\alpha), \quad \alpha\in \Lambda.
$$
For any $U_{\alpha\beta}\not = \emptyset$, we have a canonical vector bundle isomorphism over $U_{\alpha\beta}$
$$
\rho_{\alpha\beta} : \End( \maE_\alpha \otimes \maL_{\alpha\beta}) \longrightarrow \maA_\alpha,
$$
extending the canonical isomorphism $\End(\maL_{\alpha\beta}) \simeq U_{\alpha\beta}\times \C$. Therefore, the bundle isomorphism $\varphi_{\alpha\beta}$ together with the identification $\rho_{\alpha\beta}$ induce the isomorphism of algebra bundles over $U_{\alpha\beta}$ given by
$$
\maA_\beta \stackrel{\varphi_{\alpha\beta}^*}{\longrightarrow} \End( \maE_\alpha \otimes \maL_{\alpha\beta})\stackrel{\rho_{\alpha\beta}^*}{\longrightarrow} \maA_\alpha.
$$
We denote  this isomorphism by $\varphi_{\alpha\beta}^*$. It is then easy to check that
$$
\varphi_{\alpha\beta}^* \circ \varphi_{\beta\gamma}^* = \varphi_{\alpha\gamma}^*, \quad \text{ over } U_{\alpha\beta\gamma}.
$$
Therefore, the collection $(\maA_\alpha)$   defines a bundle $\maA $ of algebras over $M$, which we denote $\End(\maE)$. We leave to the reader an easy check that the isomorphism class of $\End(\maE)$ depends only on the isomorphism class of $\maE$.

Note that for every $\alpha$ we have the trace $ \tr_{\alpha} \colon \End(\maE_{\alpha}) \to C^{\infty}(U_{\alpha})$.
For $a \in \Gamma(U_{\alpha \beta}; \maA_{\beta})$ $\tr_{\alpha}(\varphi_{\alpha \beta}^*(a))= \tr_{\beta}(a)$.
Therefore we obtain the trace $\tr \colon \End(\maE) \to C^{\infty}(M)$ defined by
\[ \tr(a)|_{U_{\alpha}}= \tr_{\alpha}(a|_{U_{\alpha}}) \text{ for } a \in \End(\maE)
\]
If the bundle $\maE$ is $\Z_2$-graded then $\End(\maE)$ is a bundle
of $\Z_2$-graded algebras with a supertrace  $ \str \colon \End(\maE) \to C^{\infty}(M)$.

\begin{definition}
The bundle $\maA = \End(\maE) $ is called the Azumaya bundle associated with the $\maL$-twisted bundle $\maE$.
\end{definition}

If  $\maE$ is an Hermitian twisted bundle, each of the bundles $\End(\maE_{\alpha})$ is a bundle of *-algebras,
with the *-operation given by taking the adjoint endomorphism. This induces a structure of a bundle of *-algebras on $\End(\maE)$.

Notice that, more generally, if $\maE$ and $\maE'$ are $\maL$-twisted bundles, we have a well defined
bundle $\Hom (\maE, \maE')$, defined similarly by $\left.\Hom (\maE, \maE') \right|_{U_{\alpha}}=\Hom (\maE|_{U_{\alpha}}, \maE'|_{U_{\alpha}})$.

Let $\maL$ be a gerbe  on $M$ and $\maE$ an $\maL$-twisted bundle on $M$. Let $(U_\alpha, \maL_{\alpha\beta}, \mu_{\alpha\beta\gamma}, \maE_\alpha, \varphi_{\alpha\beta}) $ be a descent datum for $\maE$.

\begin{definition}
A connection on $(U_\alpha, \maL_{\alpha\beta}, \mu_{\alpha\beta\gamma}, \maE_\alpha, \varphi_{\alpha\beta}) $  is a collection $(\nabla_\alpha, \nabla_{\alpha \beta}, \omega_{\alpha})$ where $(\nabla_{\alpha \beta}, \omega_{\alpha})$ is a connection on $(U_\alpha, \maL_{\alpha\beta}, \mu_{\alpha\beta\gamma})$ and each $\nabla_{\alpha}$ is a connection on $\maE_{\alpha}$ such that the identities

\begin{equation}\label{connectionon}
\varphi_{\alpha\beta}^* \nabla_\beta = \nabla_\alpha\otimes \id + \id \otimes \nabla_{\alpha\beta}
\end{equation}
$\text{hold for } U_{\alpha\beta}\not = \emptyset.$
\end{definition}

\begin{lemma}\label{connection existence}
Let $(U_\alpha, \maL_{\alpha\beta}, \mu_{\alpha\beta\gamma}, \maE_\alpha, \varphi_{\alpha\beta}) $ be a descent datum for an $\maL$-twisted bundle $\maE$.
Then every connection $(\nabla_{\alpha\beta}, \omega_{\alpha})$ on the descent datum  $(U_\alpha, \maL_{\alpha\beta}, \mu_{\alpha\beta\gamma})$ for $\maL$ can be extended to a connection for the descent datum for $\maE$.
\end{lemma}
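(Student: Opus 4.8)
The plan is to build the connection on the twisted bundle descent datum in two stages: first fix the gerbe part, then produce the bundle part by a partition-of-unity averaging argument combined with the $\check{\mathrm{C}}$ech-theoretic device already used in Lemma \ref{descconnection}. Since we are told the connection $(\nabla_{\alpha\beta}, \omega_{\alpha})$ on the descent datum for $\maL$ is given, the only thing to construct is the collection $(\nabla_\alpha)$ of connections on the bundles $\maE_\alpha$ satisfying the compatibility identity \eqref{connectionon}, namely $\varphi_{\alpha\beta}^* \nabla_\beta = \nabla_\alpha \otimes \id + \id \otimes \nabla_{\alpha\beta}$ over each nonempty $U_{\alpha\beta}$.

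First I would choose, for every $\alpha$, an arbitrary connection $\nabla_\alpha'$ on $\maE_\alpha$ (these exist by the usual partition-of-unity construction on each $U_\alpha$ separately). Then, exactly as in the proof of Lemma \ref{descconnection}, I would measure the failure of compatibility by setting
\[
B_{\alpha\beta} := \varphi_{\alpha\beta}^*\nabla_\beta' - \nabla_\alpha'\otimes\id - \id\otimes\nabla_{\alpha\beta},
\]
which, using the canonical identification of $\End(\maE_\alpha\otimes\maL_{\alpha\beta})$-valued objects, is a $1$-form on $U_{\alpha\beta}$ with values in $\End(\maE_\alpha)$ (transported appropriately); more precisely it is the difference of two connections on $\maE_\alpha\otimes\maL_{\alpha\beta}$, hence an $\End$-valued $1$-form there. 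The cocycle condition for $\varphi$ (the commuting square in the definition of a descent datum for $\maE$) together with the analogous condition $\mu_{\alpha\beta\gamma}^*\nabla_{\alpha\gamma} = \nabla_{\alpha\beta}\otimes\id + \id\otimes\nabla_{\beta\gamma}$ from Lemma \ref{descconnection} forces the $\check{\mathrm{C}}$ech relation $B_{\alpha\gamma} = B_{\alpha\beta} + \varphi_{\alpha\beta}^*(B_{\beta\gamma})$, i.e. $\{B_{\alpha\beta}\}$ is a $\check{\mathrm{C}}$ech $1$-cocycle with values in the sheaf of $\End(\maE)$-valued $1$-forms (using the gluing isomorphisms $\varphi_{\alpha\beta}^*$ of Azumaya bundles from the paragraph after Lemma \ref{extend}). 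By the acyclicity of this fine sheaf there is a collection $\{B_\alpha\}$, with $B_\alpha$ an $\End(\maE_\alpha)$-valued $1$-form on $U_\alpha$, such that $B_{\alpha\beta} = B_\beta - \varphi_{\alpha\beta}^* (B_\alpha)$ appropriately; concretely one takes $B_\alpha = \sum_\gamma \chi_\gamma\, \varphi_{\gamma\alpha}^*(B_{\gamma\alpha})$ for a partition of unity $\{\chi_\gamma\}$ subordinate to the cover. Then setting $\nabla_\alpha := \nabla_\alpha' + B_\alpha$ yields connections on the $\maE_\alpha$ satisfying \eqref{connectionon}, and $(\nabla_\alpha, \nabla_{\alpha\beta}, \omega_\alpha)$ is the desired connection on the descent datum for $\maE$.

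The main obstacle is purely bookkeeping: one must be careful about exactly which bundle each form lives in and how the isomorphisms $\varphi_{\alpha\beta}$ and $\mu_{\alpha\beta\gamma}$ (and the canonical identification $\End(\maE_\alpha\otimes\maL_{\alpha\beta})\simeq\End(\maE_\alpha)\otimes\End(\maL_{\alpha\beta})\simeq\End(\maE_\alpha)$) convert a $1$-form on $U_{\alpha\beta}$ with values in $\End(\maE_\beta)$ into one with values in $\End(\maE_\alpha)$, so that the $\check{\mathrm{C}}$ech coboundary identity makes literal sense. Once the sheaf-theoretic setup is organized correctly — the cochains being sections of the fine sheaf $\Omega^1(\End\maE)$ twisted by the transition isomorphisms $\varphi_{\alpha\beta}^*$ — the vanishing of $H^1$ via a partition of unity is routine, and the verification that $\nabla_\alpha'+B_\alpha$ satisfies \eqref{connectionon} is a direct substitution. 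I would also remark that in the Hermitian case one takes each $\nabla_\alpha'$ Hermitian, averages with a real partition of unity, and notes that $B_{\alpha\beta}$ is then skew-Hermitian so that $\nabla_\alpha$ remains Hermitian, giving the unitary refinement of the statement.
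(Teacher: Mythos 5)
Your proposal follows exactly the paper's argument: pick arbitrary connections $\nabla'_\alpha$ on the $\maE_\alpha$, measure the failure of compatibility by a $1$-cochain $A_{\alpha\beta}\in\Omega^1(U_{\alpha\beta},\End(\maE_\alpha))$, verify the twisted \v{C}ech cocycle relation $A_{\alpha\gamma}=A_{\alpha\beta}+\varphi_{\alpha\beta}^*A_{\beta\gamma}$, split it using softness of the sheaf $\Omega^1(\End(\maE))$, and correct $\nabla'_\alpha$ accordingly. The only blemish is in the concrete splitting formulas: the paper's convention gives $A_{\alpha\beta}=A_\alpha-\varphi_{\alpha\beta}^*A_\beta$ with the simple partition-of-unity choice $A_\alpha=\sum_\gamma\chi_\gamma A_{\alpha\gamma}$, whereas your displayed $B_{\alpha\beta}=B_\beta-\varphi_{\alpha\beta}^*(B_\alpha)$ and $B_\alpha=\sum_\gamma\chi_\gamma\,\varphi_{\gamma\alpha}^*(B_{\gamma\alpha})$ have the indices and transport maps scrambled (e.g.\ $\varphi_{\gamma\alpha}^*$ maps $\End(\maE_\alpha)$-valued to $\End(\maE_\gamma)$-valued forms, so it cannot be applied to $B_{\gamma\alpha}$), but this is exactly the kind of bookkeeping you already flag and does not affect the validity of the plan.
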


\begin{proof}
We start with a collection $(\nabla'_\alpha)$ of connections on $(\maE_\alpha)$. Then we set for $U_{\alpha\beta}\not = \emptyset$:
$$
A_{\alpha\beta} = \varphi_{\alpha\beta}^* \nabla'_\beta - \nabla'_\alpha\otimes \id - \id \otimes \nabla_{\alpha\beta}
$$
So, $A_{\alpha\beta}$ is differential $1$-form on $U_{\alpha\beta}$ with coefficients in the bundle $\End(\maE_\alpha\otimes \maL_{\alpha\beta})$. This latter being canonically isomorphic to $\End(\maE_\alpha)$, we see that
$$
A_{\alpha\beta} \in \Omega^1 (U_{\alpha\beta}, \End( \maE_\alpha)).
$$
Using the functoriality conditions on the isomorphisms $(\varphi_{\alpha\beta})$, it is then easy to check that
$$
A_{\alpha\gamma} = A_{\alpha\beta} + \varphi_{\alpha\beta}^* A_{\beta\gamma},
$$
where $\varphi_{\alpha\beta}$ is viewed here as the isomorphism over $U_{\alpha\beta}$ between $\End(\maE_\beta)$ and $\End(\maE_\alpha)$. Since  the sheaf of sections of the bundle $\End(\maE)$ is soft, there exists a collection $ A_\alpha\in \Omega^1(U_\alpha; \End(\maE_\alpha)) $ such that
$$
 A_{\alpha\beta} = A_\alpha - \varphi_{\alpha\beta}^*A_\beta.
$$
The collection $\nabla_\alpha= \nabla'_\alpha + A_\alpha$ is then a connection on the $\maL$-twisted vector bundle $\maE$ compatible with the curving $\nabla_{\alpha \beta}$.
\end{proof}


An  isomorphism  between two descent 
data   $( U_{\alpha}, \maL_{\alpha\beta}, \mu_{\alpha\beta\gamma}, \maE_{\alpha}, \varphi_{\alpha \beta})$ and
$(  U_{\alpha}', \maL_{\alpha\beta}', \mu_{\alpha\beta\gamma}', \maE_{\alpha}', \varphi_{\alpha \beta}')$ 
for $\maE$ with connections $(\nabla_\alpha, \nabla_{\alpha \beta}, \omega_{\alpha}) $ and
$(\nabla_\alpha', \nabla_{\alpha \beta}', \omega_{\alpha}') $ respectively is given by the collections
 $s=(\rho_{\alpha}, S_{\alpha}, \lambda_{\alpha \beta}, )$ where 
$(\rho_{\alpha}, S_{\alpha}, \lambda_{\alpha \beta})$ is a morphism between the descent data 
without connections,  $\nabla_\alpha^S$ are connections on $S_\alpha$ such that 
$\left(S_{\alpha}, \lambda_{\alpha \beta}, \nabla_\alpha^S\right)$ is a morphism of the corresponding 
gerbe descent data with connectictions (i.e. the equatons 
\eqref{nabla'}, \eqref{omega'} are satisfied) and the equality 
\[
\rho_{\alpha}^* \nabla_{\alpha}' =\nabla_{\alpha} \otimes \id + \id \otimes \nabla_{\alpha}^S
 \]
holds (in the notations of \eqref{nabla'} and \eqref{omega'}). 

A connection on a twisted bundle is then a choice  of connections on each
descent datum of this twisted bundle and lifting of  isomorphisms of descent data to
isomorphism of descent data with connections. Every connection
on a gerbe $\maL$ can be extended to a connection on any $\maL$-twisted bundle.

\begin{proposition}\label{global curvature}
Let $\maE$ be an $\maL$-twisted bundle with connection. Choose a descent datum $(U_\alpha, \maL_{\alpha\beta}, \mu_{\alpha\beta\gamma}, \maE_\alpha, \varphi_{\alpha\beta}) $   with a connection $(\nabla_\alpha, \nabla_{\alpha \beta}, \omega_{\alpha})$ representing $\maE$. Then the collection $(\theta_\alpha + \omega_\alpha)$, where $\theta_\alpha=\nabla_\alpha^2$ is the curvature of $\nabla_{\alpha}$, defines a global differential $2$-form $\theta$ on $M$ with coefficients in the Azumaya bundle $\maA=\End(\maE)$. This form is independent of the choice of the representing descent datum.
\end{proposition}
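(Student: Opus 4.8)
The plan is to verify two things: first that on each $U_\alpha$ the expression $\theta_\alpha+\omega_\alpha$ is genuinely $\End(\maE_\alpha)$-valued (so that gluing makes sense), and second that the local pieces agree on overlaps $U_{\alpha\beta}$ under the identification $\varphi_{\alpha\beta}^*\colon \End(\maE_\beta)\to\End(\maE_\alpha)$ and hence patch to a global form $\theta\in\Omega^2(M;\maA)$; finally I would check independence of the chosen descent datum with connection. The form $\omega_\alpha$ is a scalar $2$-form, so it is certainly $\End(\maE_\alpha)$-valued (a multiple of the identity endomorphism), and $\theta_\alpha=\nabla_\alpha^2$ is $\End(\maE_\alpha)$-valued by the usual theory of connections on vector bundles; so the first point is immediate.

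For the overlap compatibility I would differentiate the defining relation \eqref{connectionon}, namely $\varphi_{\alpha\beta}^*\nabla_\beta=\nabla_\alpha\otimes\id+\id\otimes\nabla_{\alpha\beta}$, to compute curvatures. Squaring the right-hand side (a tensor-product connection on $\maE_\alpha\otimes\maL_{\alpha\beta}$) gives $\theta_\alpha\otimes\id+\id\otimes\omega_{\alpha\beta}$, where $\omega_{\alpha\beta}=\nabla_{\alpha\beta}^2$; and squaring the left-hand side gives $\varphi_{\alpha\beta}^*\theta_\beta$ transported to $\End(\maE_\alpha\otimes\maL_{\alpha\beta})$, which under the canonical isomorphism $\rho_{\alpha\beta}\colon\End(\maE_\alpha\otimes\maL_{\alpha\beta})\xrightarrow{\sim}\End(\maE_\alpha)$ becomes exactly $\varphi_{\alpha\beta}^*\theta_\beta$ in the sense of the induced algebra-bundle isomorphism. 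Since $\rho_{\alpha\beta}$ sends $\id\otimes\omega_{\alpha\beta}$ to the scalar $2$-form $\omega_{\alpha\beta}$ (because $\End(\maL_{\alpha\beta})\simeq U_{\alpha\beta}\times\C$), and $\omega_{\alpha\beta}=\omega_\alpha-\omega_\beta$ by the second Lemma after \ref{descconnection}, I would conclude
\[
\varphi_{\alpha\beta}^*\theta_\beta=\theta_\alpha+\omega_\alpha-\omega_\beta,
\]
i.e. $\varphi_{\alpha\beta}^*(\theta_\beta+\omega_\beta)=\theta_\alpha+\omega_\alpha$ on $U_{\alpha\beta}$. That is precisely the cocycle condition ensuring the local forms glue to a global $\theta\in\Omega^2(M;\maA)$, using that the $\varphi_{\alpha\beta}^*$ are the transition isomorphisms defining $\maA=\End(\maE)$ and that they satisfy $\varphi_{\alpha\beta}^*\varphi_{\beta\gamma}^*=\varphi_{\alpha\gamma}^*$.

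For independence of the representing descent datum with connection, I would invoke an isomorphism $s=(\rho_\alpha,S_\alpha,\lambda_{\alpha\beta})$ with connections $\nabla_\alpha^S$ on $S_\alpha$ as described just before the Proposition, and use the relation $\rho_\alpha^*\nabla_\alpha'=\nabla_\alpha\otimes\id+\id\otimes\nabla_\alpha^S$ together with \eqref{omega'}, $\omega_\alpha=\omega_\alpha'+\pi_\alpha$ where $\pi_\alpha=(\nabla_\alpha^S)^2$. Squaring the first relation gives $\rho_\alpha^*\theta_\alpha'=\theta_\alpha\otimes\id+\id\otimes\pi_\alpha$, which under the trivialization $\End(S_\alpha)\simeq U_\alpha\times\C$ and the induced isomorphism $\End(\maE_\alpha\otimes S_\alpha)\simeq\End(\maE_\alpha)$ reads $\rho_\alpha^*\theta_\alpha'=\theta_\alpha+\pi_\alpha$; combining with \eqref{omega'} yields $\rho_\alpha^*(\theta_\alpha'+\omega_\alpha')=\theta_\alpha+\omega_\alpha$, so the two global forms correspond under the induced isomorphism of Azumaya bundles. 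The only mildly delicate point — the main obstacle, such as it is — is bookkeeping the canonical identifications $\End(V\otimes L)\simeq\End(V)$ for a line bundle $L$ and checking that a tensor-product connection's curvature transforms correctly under them; once that is set up carefully the rest is a direct computation, and I would leave the purely diagrammatic verifications to the reader as is customary.
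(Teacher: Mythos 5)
Your proposal is correct and follows essentially the same route as the paper: squaring the compatibility relation \eqref{connectionon} to get $\varphi_{\alpha\beta}^*\theta_\beta = \theta_\alpha + \omega_{\alpha\beta}$, invoking $\omega_{\alpha\beta} = \omega_\alpha - \omega_\beta$, and observing that $\omega_\beta$ is scalar so $\varphi_{\alpha\beta}^*(\theta_\beta+\omega_\beta) = \theta_\alpha+\omega_\alpha$. Your treatment of the independence of descent datum via $\rho_\alpha^*\nabla_\alpha' = \nabla_\alpha\otimes\id + \id\otimes\nabla_\alpha^S$ and \eqref{omega'} is a correct fleshing-out of the step the paper leaves as ``easy to see.''
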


\begin{proof}
We have for any $\alpha\in \Lambda$, $\theta_\alpha\in \Omega^2 (U_\alpha, \maA_\alpha)$ where $\maA_\alpha=\End(\maE_\alpha))$ is the Azumaya bundle associated with the twisted bundle $\maE$.  The equation \eqref{connectionon} implies that
$$
\varphi_{\alpha\beta}^* \theta_\beta = \theta_\alpha +  \omega_{\alpha\beta}\quad \in \quad \Omega^2(U_{\alpha\beta}, \End(\maE_\alpha)).
$$%
Therefore, the collection $(\theta_\alpha + \omega_\alpha)$ of elements of $\Omega^2(U_\alpha, \maA_\alpha)$ satisfies the relations
$$
\varphi_{\alpha\beta}^*(\theta_\beta + \omega_\beta) = \theta_\alpha + \omega_\alpha.
$$
It is easy to see that the form $\theta$ is independent of the equivalence class of the connection
and is functorial with respect to the isomorphism of descent data.
\end{proof}

In the notations above let $\nabla \colon \maA \to \Omega^1(M, \maA)$ be connection defined for a fixed descent datum by
\begin{equation}
(\nabla \xi)|_{U_{\alpha}}=[\nabla_{\alpha}, \xi] .
\end{equation}

It is easy to see that $\nabla$ is well defined and by derivation with respect to the product on $\maA$.

Note that
\begin{equation}
\nabla^2=[\theta, \cdot] \text{ and }\nabla \theta =2 \pi i \varOmega
\end{equation}
where $\varOmega$ is the $3$-curvature form of the connection on $\maL$, see Lemma \ref{defomega}.

\begin{proposition}\label{twchern}
\begin{enumerate}
\item Let $\maE$ be an $\maL$-twisted bundle and $\nabla$ a connection on $\maE$.
Set $\Ch_{\maL}(\nabla)= \tr e^{-\frac{u\theta}{2 \pi i}} \in \Omega^*(M)[u]$. Then $ d_{\varOmega}\Ch_{\maL}(\nabla)=0$
\item The class of $ \Ch_{\maL}(\nabla)$ in $H_{\varOmega}(M)$ is independent of choice of connection.
Namely assume we are given a different connection $\nabla'$ on $\maE$ (and therefore on $\maL$)
 and let $\varOmega'$ be the associated $3$-curvature form. Then $I([\Ch_{\maL}(\nabla)])=[\Ch_{\maL}(\nabla')]$, where $I$ is the canonical isomorphism of cohomology
of $\left( \Omega^*(M)[u], d_{\varOmega}\right)$ with cohomology
of $\left( \Omega^*(M)[u], d_{\varOmega'}\right)$.
\end{enumerate}
\end{proposition}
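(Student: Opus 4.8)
The plan is to run the classical Chern--Weil transgression argument, carried out in terms of the global Azumaya-valued curvature $\theta\in\Omega^{2}(M,\maA)$, the induced connection $\nabla$ on $\maA=\End(\maE)$, and the two structural identities $\nabla^{2}=[\theta,\cdot]$ and $\nabla\theta=2\pi i\varOmega$ recorded just before the statement, together with the observation that the fibrewise trace intertwines $\nabla$ on $\maA$-valued forms with the de Rham differential, $\tr(\nabla\xi)=d(\tr\xi)$ (a local identity, hence globally valid since both sides are well defined). For part (1), set $F=-\tfrac{u}{2\pi i}\theta$. Then $F$ has total degree $0$ and $\nabla F=-\tfrac{u}{2\pi i}\nabla\theta=-u\varOmega$ is a scalar form, hence central in $\maA$; consequently $\nabla(F^{n})=nF^{n-1}\wedge\nabla F$ and $\nabla\big(e^{F}\big)=-u\,\varOmega\wedge e^{F}$. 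Applying the trace, $d\,\Ch_{\maL}(\nabla)=\tr\big(\nabla e^{F}\big)=-u\,\varOmega\wedge\Ch_{\maL}(\nabla)$, whence $d_{\varOmega}\Ch_{\maL}(\nabla)=u\,d\,\Ch_{\maL}(\nabla)+u^{2}\varOmega\wedge\Ch_{\maL}(\nabla)=0$.

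For part (2) I would split the change $\nabla\to\nabla'$ into two moves. First, suppose $\nabla$ and $\nabla'$ induce the \emph{same} connection on $\maL$. Representing both over a common descent datum $(U_{\alpha},\maL_{\alpha\beta},\mu_{\alpha\beta\gamma},\maE_{\alpha},\varphi_{\alpha\beta})$ (Lemma \ref{extend}, together with transport of connections along isomorphisms of descent data, which preserves $\theta$ and hence $\Ch_{\maL}$), identity \eqref{connectionon} forces $\beta:=\nabla'-\nabla$ to be a global element of $\Omega^{1}(M,\maA)$. Put $\nabla_{t}=\nabla+t\beta$, with Azumaya curvature $\theta_{t}$; one checks $\dot\theta_{t}=\nabla_{t}\beta$, and then, using $\nabla_{t}\big(e^{-u\theta_{t}/2\pi i}\big)=-u\varOmega\, e^{-u\theta_{t}/2\pi i}$ (the same computation as in part (1)), the graded Leibniz rule, and $\tr\circ\nabla_{t}=d\circ\tr$, a short computation gives
\[
\frac{d}{dt}\,\Ch_{\maL}(\nabla_{t})=d_{\varOmega}\!\left(-\frac{1}{2\pi i}\,\tr\!\left(\beta\wedge e^{-u\theta_{t}/2\pi i}\right)\right).
\]
Integrating in $t$ exhibits $\Ch_{\maL}(\nabla')-\Ch_{\maL}(\nabla)$ as $d_{\varOmega}$-exact, which proves (2) in the case $\varOmega'=\varOmega$, $I=\id$.

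Second, I would treat a pure change of curving. Fix a descent datum $(U_{\alpha},\maL_{\alpha\beta},\mu_{\alpha\beta\gamma},\maE_{\alpha},\varphi_{\alpha\beta})$ with gerbe connection $(\nabla_{\alpha\beta},\omega_{\alpha})$ underlying $\nabla$; any other gerbe connection on this descent datum has the form $(\nabla_{\alpha\beta}+\delta_{\alpha}-\delta_{\beta},\ \omega_{\alpha}+d\delta_{\alpha}+\zeta)$ with $\delta_{\alpha}\in\Omega^{1}(U_{\alpha})$ and $\zeta\in\Omega^{2}(M)$, and (by the very formula in the proof of Lemma \ref{defomega}) its canonical element relative to $\varOmega$ is the class of $\eta:=\zeta/2\pi i$. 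A direct check using \eqref{connectionon} shows that $\nabla''_{\alpha}:=\nabla_{\alpha}-\delta_{\alpha}\cdot\id$ extends this new curving to $\maE$, and that the resulting Azumaya curvature is $\theta''=\theta+\zeta\cdot\id=\theta+2\pi i\,\eta\cdot\id$; since $\eta\cdot\id$ is central, $\Ch_{\maL}(\nabla'')=e^{-u\eta}\wedge\Ch_{\maL}(\nabla)=I_{\eta}\big(\Ch_{\maL}(\nabla)\big)$ exactly. Now, for arbitrary $\nabla,\nabla'$: put both over a common descent datum, let $(\delta_{\alpha},\zeta)$ relate the two curvings with $\eta=\zeta/2\pi i$, and form $\nabla''$ as above. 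Then $\nabla''$ and $\nabla'$ induce the same connection on $\maL$, so the first move gives $[\Ch_{\maL}(\nabla'')]=[\Ch_{\maL}(\nabla')]$ in $H_{\varOmega'}(M)$, while $I_{\eta}\big(\Ch_{\maL}(\nabla)\big)=\Ch_{\maL}(\nabla'')$; combining (and using Lemma \ref{Imap}, so that $I_{\eta}$ on cohomology depends only on the class of $\eta$ and therefore equals the canonical $I$), $I\big([\Ch_{\maL}(\nabla)]\big)=[\Ch_{\maL}(\nabla')]$, as claimed.

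The main obstacle is not the analysis — the closedness and the transgression formula are routine Chern--Weil once the identities $\nabla^{2}=[\theta,\cdot]$ and $\nabla\theta=2\pi i\varOmega$ are in hand — but the bookkeeping of the gerbe-connection degrees of freedom: one must verify that an arbitrary change of curving can, after an explicit modification of the $\maE$-connection, be normalized to multiplication by $e^{-u\eta}$ with $\eta$ \emph{exactly} the canonical representative from Lemma \ref{defomega}, so that the transgression lands on the canonical isomorphism $I$ and not merely on some isomorphism of the twisted complexes. One must also check that the whole argument is independent of the auxiliary descent datum and of the chosen representative connection within its isomorphism class, which rests on Proposition \ref{global curvature} and the functoriality of $\theta$ under isomorphisms of descent data and pullbacks established earlier.
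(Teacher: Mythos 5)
Your proof is correct, and the overall route genuinely differs from the paper's. For part (1) you and the paper do the same computation. For part (2) the paper runs a single interpolation argument: it passes to $\widetilde{M}=\R\times M$, defines the $t$-linear interpolation of \emph{all} the data $(\nabla_{\alpha\beta},\omega_\alpha,\nabla_\alpha)$ simultaneously, computes $\widetilde{\varOmega}=(1-t)\pi^*\varOmega+t\pi^*\varOmega'+dt\wedge\pi^*\eta$, applies part (1) on $\widetilde{M}$, and integrates $\iota_{\partial/\partial t}$ over $[0,1]$ after conjugating by $e^{ut\eta}$. You instead factor the change of connection into two moves: (i) keeping the gerbe connection fixed and varying only the $\maE$-part, which you handle with a standard Chern--Weil transgression yielding an explicit $d_\varOmega$-primitive; and (ii) a pure change of curving, which you normalize (by subtracting $\delta_\alpha\cdot\id$ from the $\maE$-connection) so that the Azumaya curvature changes by the central $2\pi i\,\eta\cdot\id$ and the twisted Chern forms are related \emph{exactly} by $I_\eta$ at the chain level. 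Your decomposition avoids the auxiliary manifold $\R\times M$ and makes the appearance of the canonical $I_\eta$ transparent; the paper's approach is more compact, couples cleanly with Lemma~\ref{Imap}, and is designed to carry over verbatim to the superconnection case (Proposition~\ref{chernsuperconnection}), where your step (ii) (an affine shift of the connection by a scalar $1$-form) would also go through but your step (i) would need the same transgression generalized to superconnections. Both rest on the identical structural identities $\nabla^2=[\theta,\cdot]$, $\nabla\theta=2\pi i\varOmega$, and $\tr\circ\nabla=d\circ\tr$, so the Chern--Weil core is shared; the difference is in the bookkeeping of the gerbe degrees of freedom.
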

We  denote the class of $\Ch_{\maL}(\nabla)$ by $\Ch_{\maL}(\maE)$.
\begin{proof}

Since $\varOmega$ is central, $
\nabla( e^{-\frac{u\theta}{2 \pi i}})=-u \varOmega  e^{-\frac{u\theta}{2 \pi i}}.$
Hence
$$
d  \tr e^{-\frac{u\theta}{2 \pi i}} = \tr \nabla( e^{-\frac{u\theta}{2 \pi i}})= -u\varOmega\wedge \Ch_{\maL}(\nabla)
$$
and $ d_{\varOmega}\Ch_{\maL}(\nabla)=0$, which proves the first statement.

Fix now a descent datum $( U_{\alpha}, \maL_{\alpha\beta}, \mu_{\alpha\beta\gamma}, \maE_{\alpha}, \varphi_{\alpha \beta})$ for $\maE$ together with a connection $\nabla=(\nabla_{\alpha}, \nabla_{\alpha \beta}, \omega_{\alpha})$ and let $\nabla'=(\nabla_{\alpha}', \nabla_{\alpha \beta}', \omega_{\alpha}')$ be another connection on $\maE$.
Set $\delta_{\alpha \beta} =\nabla_{\alpha \beta}'-\nabla_{\alpha \beta}$ and let $\delta_{\alpha}$ be such that $\delta_{\alpha}-\delta_{\beta}=\delta_{\alpha \beta}$. Recall that the  isomorphism $I$ is induced by the map of complexes
\[
\xi \longmapsto e^{-u\eta}\wedge \xi
\]
where   $\eta \in \Omega^2(M)$ is defined by $\eta|_{U_{\alpha}}=\frac{1}{2\pi i}(\omega_{\alpha}'-\omega_{\alpha} -d \delta_{\alpha})$.

Consider now the manifold $\widetilde{M}=\mathbb{R} \times M$. Denote by $\pi\colon  \widetilde{M} \to  M$ the projection
on the second factor and by $t \colon \widetilde{M} \to \mathbb{R}$ the projection on the first factor.  Consider on $\widetilde{M}$ the gerbe $\widetilde{\maL}=\pi^*{\maL}$ given by the descent datum
$(\pi^{-1}U_\alpha, \pi^*\maL_{\alpha\beta}, \pi^*\mu_{\alpha\beta\gamma})$. Then $\pi^*\maE$, $\pi^*\phi_{\alpha\beta}$ describe a $\widetilde{\maL}$ twisted bundle $\widetilde{\maE}$. Moreover,
$$
\widetilde{\nabla}_{\alpha \beta} := (1-t)\pi^*\nabla_{\alpha \beta}+t\pi^*\nabla_{\alpha \beta}' \text{ and }\widetilde{\omega}_{\alpha}=(1-t)\pi^*\omega_{\alpha }+t\pi^*\omega_{\alpha }'+dt \wedge\delta_{\alpha}
$$  define respectively, connective structure and curving of $\widetilde{L}$. Similarly,  $\widetilde{\nabla}_{\alpha}=(1-t)\pi^*\nabla_{\alpha}+t\pi^*\nabla_{\alpha }'$ defines a compatible connection on $\widetilde{\maE}$. The corresponding $3$-form is given by
$$
\widetilde{\varOmega}= (1-t)\pi^*\varOmega+t\pi^*\varOmega' +\frac{1}{2 \pi i}dt \wedge (\omega_{\alpha}'-\omega_{\alpha} -d \delta_{\alpha})= (1-t)\pi^*\varOmega+t\pi^*\varOmega' + dt \wedge \pi^*\eta.
$$
Now, the map $\xi \mapsto e^{ut\eta}\wedge \xi$ is an isomorphism of complexes $\left( \Omega^*(\widetilde{M})[u], d_{\widetilde{\varOmega}}\right)$ and  $\left( \Omega^*(\widetilde{M})[u], d_{\pi^*\varOmega}\right)$. Set $\widetilde{\maA}=\pi^*\maA$ and let $\widetilde{\theta} \in \Omega^2(\widetilde{M}, \widetilde{\maA})$
be the form defined by $\pi^*\theta_{\alpha}+\widetilde{\omega}_{\alpha}$.
 By the result of the first
part of the proposition the differential form $  \tr e^{-\frac{u\widetilde{\theta}}{2 \pi i}}$ is a cocycle in the complex
$\left( \Omega^*(\widetilde{M})[u], d_{\widetilde{\varOmega}}\right)$. Hence $\tr e^{ut\eta}\wedge e^{-\frac{u\widetilde{\theta}}{2 \pi i}}$ is a cocycle in the complex $\left( \Omega^*(\widetilde{M})[u], d_{\pi^*\varOmega}\right)$.
This implies the relation
$$
\tr e^{u\eta}\wedge e^{-\frac{u\theta'}{2 \pi i}}-\tr  e^{-\frac{u\theta}{2 \pi i}}=
d_{\varOmega} \int_0^1 \iota_{\frac{\partial}{\partial t}}\tr e^{ut\eta}\wedge e^{-\frac{u\widetilde{\theta}}{2 \pi i}} dt.
$$
Therefore, we finally deduce that the differential forms $\tr e^{u\eta}\wedge e^{-\frac{u\theta'}{2 \pi i}}$ and $\tr  e^{-\frac{u\theta}{2 \pi i}}$
are cohomologous in $\left( \Omega^*(M)[u], d_{\varOmega}\right)$, and hence that the differential forms $\tr  e^{-\frac{u\theta'}{2 \pi i}}$ and $\tr  e^{-u\eta}\wedge e^{-\frac{u\theta}{2 \pi i}}$ are cohomologous in $\left( \Omega^*(M)[u], d_{\varOmega'}\right)$, which finishes the proof.
\end{proof}

We will need also a notion of superconnection on the twisted bundle. We now briefly indicate the modifications
which need to be made to the notion of connection to obtain that of superconnection.
Assume that we are given a gerbe $\maL$  and  a $\Z_2$-graded $\maL$-twisted vector bundle $\maE=\maE^+\oplus \maE^-$.
Let  $(U_\alpha, \maL_{\alpha\beta}, \mu_{\alpha\beta\gamma}, \maE_\alpha, \varphi_{\alpha\beta}) $ be a descent datum for $\maE$.
\begin{definition}
 A superconnection $\A$ on the descent datum   is $\A= (\A_{\alpha}, \nabla_{\alpha \beta}, \omega_{\alpha})$ where
$(\nabla_{\alpha \beta}, \omega_{\alpha})$ is a connection on the descent datum for $\maL$ and
each $\A_{\alpha}$ is a superconnection on  $\maE_{\alpha}$ satisfying the relations
$$
\varphi_{\alpha\beta}^* \A_\beta = \A_\alpha\otimes \id + \id \otimes \nabla_{\alpha\beta},\quad \text{ for } U_{\alpha\beta}\not = \emptyset.
$$
\end{definition}

Each superconnection $\A_{\alpha}$ can be written as $ \A_{\alpha}= \sum_{k\ge 0} \A_{\alpha}^{[k]}$ where $\A_{\alpha}^{[k]} \in \Omega^k(U_{\alpha}; \End(\maE_{\alpha})^-)$ for $k$-even, $\A_{\alpha}^{[k]} \in \Omega^k(U_{\alpha}; \End(\maE_{\alpha})^+)$ for $k$-odd, $k\ne 1$, and $\A_{\alpha}^{[1]}$ is a grading preserving connection on $\maE_{\alpha}$. It is easy to see that for each $k \ne 1$ there exists a form $\A^{[k]} \in \Omega^k(M; \End(\maE))$ such that $\A^{[k]}|{U_{\alpha}}=\A_{\alpha}^{[k]}$. For $k=1$ $(\A_{\alpha}^{[1]},  \nabla_{\alpha \beta}, \omega_{\alpha})$ defines a  connection on the (descent datum of) $\maE$.

Let now $u^{1/2}$ be a formal variable of degree $-1$ such that $(u^{1/2})^2=u$. Define the rescaled superconnection
$$
\A_{u^{-1}} := \sum u^{(k-1)/2}\A^{[k]}.
$$
Let $\varOmega$ be as before the curvature $3$-form  of the connection on $\maL$. Define the curvature of the rescaled superconnection $\theta^{\A_{u^{-1}}}$ by
$$
\theta^{\A_{u^{-1}}}|_{U_{\alpha}}=(\A_{\alpha})_{u^{-1}}^2+\omega_{\alpha}.
$$
 Then $u\theta^{\A_{u^{-1}}} \in \Omega^{even}(M, \End (\maE)^+)[u] +u^{1/2}\Omega^{odd}(M, \End (\maE)^-)[u]$. We therefore have the differential form $\exp\left(-\frac{u\theta^{\A_{u^{-1}}}}{2 \pi i}\right)$ which belongs to
$\Omega^{even}(M, \End (\maE)^+)[u] +u^{1/2}\Omega^{odd}(M, \End (\maE)^-)[u]$ and the differential form $\str  \exp \left(-\frac{u\theta^{\A_{u^{-1}}}}{2 \pi i}\right)$ which belongs to $\Omega^{even}(M)[u]$.
The following is an analogue of the Proposition \ref{twchern} for the superconnections with the essentially identical proof.

\begin{proposition}\label{chernsuperconnection}\

\begin{enumerate}
\item Set $\Ch_{\maL}(\A)= \str \exp \left(-\frac{u\theta^{\A_{u^{-1}}}}{2 \pi i}\right)$. Then $ d_{\varOmega}\Ch_{\maL}(\A)=0$
\item
The class of $ \Ch_{\maL}(\A)$ in $H_{\varOmega}(M)$ is independent of choice of superconnection. Specifically, assume we are given a different superconnection $\A'$ on $\maE$ (and therefore a different connection on $\maL$) and let $\varOmega'$ be the associated $3$-curvature form. Then $I([\Ch_{\maL}(\A)])=[\Ch_{\maL}(\A')]$, where $I$ is the canonical isomorphism of cohomology
of $\left( \Omega^*(M)[u], d_{\varOmega}\right)$ with cohomology
of $\left( \Omega^*(M)[u], d_{\varOmega'}\right)$.
\end{enumerate}
\end{proposition}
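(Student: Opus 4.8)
The plan is to imitate the proof of Proposition \ref{twchern} verbatim, keeping track of the extra book-keeping the $\Z_2$-grading forces on us. For part (1), the key identity is the Bianchi-type relation $\nabla\, u\theta^{\A_{u^{-1}}} = u^2 (2\pi i)\,\varOmega$, where $\nabla = [\A^{[1]}, \cdot]$ is the connection on $\maA = \End(\maE)$ induced by the degree-one part of the superconnection. Indeed, working in a local descent datum, $(\A_\alpha)_{u^{-1}}^2 + \omega_\alpha$ differs from the globally defined $\theta^{\A_{u^{-1}}}$ only by the scalar part, and the local computation $[\A_\alpha, (\A_\alpha)_{u^{-1}}^2 + \omega_\alpha] = \id\otimes d\omega_\alpha$ (together with $d\omega_\alpha = 2\pi i\, \varOmega|_{U_\alpha}$) patches to the global statement because $\varOmega$ is central. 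From here, since $\exp(-u\theta^{\A_{u^{-1}}}/2\pi i)$ is a well-defined global section of $\Omega^{even}(M,\End(\maE)^+)[u] + u^{1/2}\Omega^{odd}(M,\End(\maE)^-)[u]$, one gets $\nabla \exp(-u\theta^{\A_{u^{-1}}}/2\pi i) = -u\varOmega\wedge \exp(-u\theta^{\A_{u^{-1}}}/2\pi i)$; applying $\str$ and using that $\str$ kills supercommutators (so $d\,\str(\cdot) = \str\,\nabla(\cdot)$) yields $d\,\Ch_{\maL}(\A) = -u\varOmega\wedge\Ch_{\maL}(\A)$, i.e. $d_{\varOmega}\Ch_{\maL}(\A) = 0$.

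For part (2), I would run the same mapping-cylinder argument as in Proposition \ref{twchern}. Given two superconnections $\A = (\A_\alpha,\nabla_{\alpha\beta},\omega_\alpha)$ and $\A' = (\A_\alpha',\nabla_{\alpha\beta}',\omega_\alpha')$, form $\widetilde M = \R\times M$ with projection $\pi$ and coordinate $t$, pull back $\maL$ and $\maE$, and interpolate: set $\widetilde{\A}_\alpha = (1-t)\pi^*\A_\alpha + t\pi^*\A_\alpha'$, together with $\widetilde{\nabla}_{\alpha\beta} = (1-t)\pi^*\nabla_{\alpha\beta} + t\pi^*\nabla_{\alpha\beta}'$ and $\widetilde{\omega}_\alpha = (1-t)\pi^*\omega_\alpha + t\pi^*\omega_\alpha' + dt\wedge\delta_\alpha$, where $\delta_\alpha$ is chosen as before so that $\delta_\alpha - \delta_\beta = \nabla_{\alpha\beta}' - \nabla_{\alpha\beta}$. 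This defines a superconnection on $\widetilde{\maE}$ whose associated $3$-curvature form is $\widetilde{\varOmega} = (1-t)\pi^*\varOmega + t\pi^*\varOmega' + dt\wedge\pi^*\eta$ with $\eta|_{U_\alpha} = \frac{1}{2\pi i}(\omega_\alpha' - \omega_\alpha - d\delta_\alpha)$, exactly as in the non-graded case, since only the degree-one part of the superconnection enters the curving. By part (1), $\str\exp(-u\theta^{\widetilde{\A}_{u^{-1}}}/2\pi i)$ is $d_{\widetilde{\varOmega}}$-closed; twisting by the complex isomorphism $\xi\mapsto e^{ut\eta}\wedge\xi$ from $(\Omega^*(\widetilde M)[u], d_{\widetilde{\varOmega}})$ to $(\Omega^*(\widetilde M)[u], d_{\pi^*\varOmega})$ and then integrating over $t\in[0,1]$ against $\iota_{\partial/\partial t}$ produces the explicit primitive exhibiting $\str e^{u\eta}\wedge e^{-u\theta^{\A_{u^{-1}}'}/2\pi i}$ and $\str e^{-u\theta^{\A_{u^{-1}}}/2\pi i}$ as $d_{\varOmega}$-cohomologous. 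Unwinding the definition of $I = I_\eta$ from \eqref{defI} and Lemma \ref{Imap}, this is precisely the assertion $I([\Ch_{\maL}(\A)]) = [\Ch_{\maL}(\A')]$.

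The one genuinely new point — and the main thing to be careful about — is the sign/degree bookkeeping in the $\Z_2$-graded setting: one must check that $u^{1/2}$ being odd interacts correctly with $\str$, that $\str$ still annihilates supercommutators of the mixed-parity forms appearing here, and that the Chevalley--Koszul homotopy formula $I_{\eta+d\epsilon} - I_\eta = h_\epsilon d_{\varOmega} + d_{\varOmega'} h_\epsilon$ from Lemma \ref{Imap} goes through unchanged (it does, since it is purely a statement about the commutative algebra $\Omega^*(M)[u]$). The differential-geometric content is identical to Proposition \ref{twchern}; accordingly I would state the proof as ``identical to that of Proposition \ref{twchern}, replacing $\tr e^{-u\theta/2\pi i}$ by $\str\exp(-u\theta^{\A_{u^{-1}}}/2\pi i)$ throughout, with the curvature identity $\nabla\theta^{\A_{u^{-1}}} = u(2\pi i)\varOmega$ in place of $\nabla\theta = 2\pi i\varOmega$,'' and leave the routine verifications to the reader.
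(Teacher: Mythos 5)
Your strategy---mirror the proof of Proposition~\ref{twchern}, replacing $\tr$ by $\str$, $\theta$ by $\theta^{\A_{u^{-1}}}$, and the affine interpolation of connections by the corresponding affine interpolation of superconnections---is exactly what the paper intends, since the paper itself offers no argument beyond the remark that the proof is ``essentially identical'' to that of Proposition~\ref{twchern}. Your part~(2) (the mapping cylinder $\widetilde M = \R\times M$ with $\widetilde\omega_\alpha = (1-t)\pi^*\omega_\alpha + t\pi^*\omega_\alpha' + dt\wedge\delta_\alpha$ and $\widetilde\varOmega = (1-t)\pi^*\varOmega + t\pi^*\varOmega' + dt\wedge\pi^*\eta$) carries over verbatim, and your closing observation that $I_\eta$ and Lemma~\ref{Imap} live purely in the commutative algebra $\Omega^*(M)[u]$ is a clean way to dispose of the extra bookkeeping.

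There is, however, a slip in the Bianchi identity you quote. You state the analogue of $\nabla\theta = 2\pi i\,\varOmega$ as $\nabla\theta^{\A_{u^{-1}}} = u\,(2\pi i)\,\varOmega$ (equivalently $\nabla\,u\theta^{\A_{u^{-1}}} = u^2(2\pi i)\varOmega$); that extra factor of $u$ is wrong, and is moreover inconsistent with your own next line $\nabla\exp(-u\theta^{\A_{u^{-1}}}/2\pi i) = -u\varOmega\wedge\exp(-u\theta^{\A_{u^{-1}}}/2\pi i)$, which requires $\nabla\,u\theta^{\A_{u^{-1}}} = u\,(2\pi i)\,\varOmega$, i.e.\ $\nabla\theta^{\A_{u^{-1}}} = 2\pi i\,\varOmega$. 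To see the correct power, compute locally: $\theta^{\A_{u^{-1}}}|_{U_\alpha} = (\A_\alpha)_{u^{-1}}^2 + \omega_\alpha$, Bianchi gives $[(\A_\alpha)_{u^{-1}}, (\A_\alpha)_{u^{-1}}^2] = 0$, and $[(\A_\alpha)_{u^{-1}}, \omega_\alpha] = d\omega_\alpha = 2\pi i\,\varOmega|_{U_\alpha}$; so $[(\A_\alpha)_{u^{-1}}, \theta^{\A_{u^{-1}}}] = 2\pi i\,\varOmega$ with no $u$. (Had your version been right, $d_{\varOmega}$-closedness would fail: one would get $ud\Ch + u^2\varOmega\Ch = (-u^3+u^2)\varOmega\Ch \ne 0$.) Related: you write the derivation as $\nabla = [\A^{[1]},\cdot]$, but the supercommutator $[\A^{[1]}_\alpha, (\A_\alpha)_{u^{-1}}^2]$ is generally nonzero (it equals $-[(\A_\alpha)_{u^{-1}} - \A^{[1]}_\alpha, (\A_\alpha)_{u^{-1}}^2]$, a supercommutator of $\End(\maE)$-valued forms). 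So the operator-level identity $\nabla\exp(\cdots)=-u\varOmega\wedge\exp(\cdots)$ does not hold for $\nabla = [\A^{[1]},\cdot]$; it holds for $[(\A_\alpha)_{u^{-1}},\cdot]$, and only the $\str$ of both expressions coincide (because $\str$ kills the supercommutator of the higher-degree components of $\A_{u^{-1}}$ with the exponential, and $\str[\A^{[1]}_\alpha,\cdot] = d\,\str(\cdot)$). Either use the full $[(\A_\alpha)_{u^{-1}},\cdot]$ throughout, or defer the equality to the level of supertraces.
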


\subsection{Horizontally twisted bundles}\label{hotwbu}
Let $\pi: M \to B$ be a smooth fibration. Let $\maL$ be a gerbe on $B$.

\begin{definition}
 A descent datum
for a horizontally $\maL$-twisted bundle $\maE$ on $M$ consists of the descent datum
$(U_{\alpha}, \maL_{\alpha\beta}, \mu_{\alpha\beta\gamma})$ for $\maL$ together with a collection
$(\maE_\alpha\to \pi^{-1}U_\alpha)_{\alpha\in \Lambda}$ of vector bundles and a collection of vector bundle isomorphisms $\varphi_{\alpha\beta}\colon \maE_\alpha\otimes \pi^*\maL_{\alpha\beta} \cong \maE_\beta$  so that
$$
(\pi^{-1}U_{\alpha}, \pi^*\maL_{\alpha\beta}, \pi^*\mu_{\alpha\beta\gamma}, \maE_\alpha, \varphi_{\alpha\beta} )
$$
is a descent datum for a twisted vector bundle on $M$.
\end{definition}

An (iso)morphism between two such descent data
$$
( U_{\alpha}, \maL_{\alpha\beta}, \mu_{\alpha\beta\gamma}, \maE_{\alpha}, \varphi_{\alpha \beta}) \text{ and }
(  U_{\alpha}', \maL_{\alpha\beta}', \mu_{\alpha\beta\gamma}', \maE_{\alpha}', \varphi_{\alpha \beta}')
$$
 is given by the collection  $(\rho_{\alpha}, S_{\alpha}, \lambda_{\alpha \beta})$ where $( S_{\alpha}, \lambda_{\alpha \beta})$ is an isomorphism between
$(  U_{\alpha}, \maL_{\alpha\beta}, \mu_{\alpha\beta\gamma})$ and
$( U_{\alpha}', \maL_{\alpha\beta}', \mu_{\alpha\beta\gamma}')$  and $\rho_{\alpha} \colon \maE_{\alpha} \otimes \pi^*S_{\alpha} \to \maE_{\alpha}'$ is such that $(\rho_{\alpha}, \pi^*S_{\alpha}, \pi^*\lambda_{\alpha \beta})$ is
an (iso)morphism between  $(\pi^{-1} U_{\alpha}, \pi^*\maL_{\alpha\beta}, \pi^*\mu_{\alpha\beta\gamma}, \maE_{\alpha}, \varphi_{\alpha \beta})$ and
$( \pi^{-1} U_{\alpha}', \pi^*\maL_{\alpha\beta}', \pi^*\mu_{\alpha\beta\gamma}', \maE_{\alpha}', \varphi_{\alpha \beta}')$.

With these definitions one can now define a horizontally twisted bundle as an equivalence class of descent data.
Let $Tw(\pi^*\maL)$ denote the set of isomorphism classes of
all $\pi^*\maL$-twisted bundles on $M$ and $Tw_h(\maL)$ denote the set of isomorphism classes of
all horizontally $\maL$-twisted bundles. Then we have an obvious map $Tw_h(\maL) \to Tw(\pi^*\maL)$. According to Lemma \ref{extend} this map is surjective. In particular $Tw_h(\maL) \ne \emptyset$ if and only if $\pi^* [\maL]$ is torsion in $H^3(M, \Z)$. It is however not injective. Indeed, if $(U_{\alpha}, \maL_{\alpha\beta}, \mu_{\alpha\beta\gamma}, \maE_\alpha, \varphi_{\alpha \beta})$ is a descent datum for a horizontally twisted bundle and
$S$ is a line bundle on $M$ then $(U_{\alpha}, \maL_{\alpha\beta}, \mu_{\alpha\beta\gamma}, \maE_\alpha \otimes S|_{\pi^{-1}U_\alpha}, \varphi_{\alpha \beta}\otimes \id)$ is another such descent datum. These data define the same
element of $Tw(\pi^*\maL)$ but, unless $S$ is a pull-back of a line bundle from $B$, different elements of $Tw_h(\maL)$.

A connection on the descent datum $( U_{\alpha}, \maL_{\alpha\beta}, \mu_{\alpha\beta\gamma}, \maE_{\alpha}, \varphi_{\alpha \beta})$ is a collection $(\nabla_{\alpha}, \nabla_{\alpha \beta}, \omega_{\alpha})$ where
$(\nabla_{\alpha \beta}, \omega_{\alpha})$ is a connection on the descent datum for $\maL$ and $\nabla_{\alpha}$ is a
connection on $\maE_{\alpha}$ such that $(\nabla_{\alpha}, \pi^*\nabla_{\alpha \beta}, \pi^*\omega_{\alpha})$
is a connection on the descent datum  $(\pi^{-1} U_{\alpha}, \pi^*\maL_{\alpha\beta}, \pi^*\mu_{\alpha\beta\gamma}, \maE_{\alpha}, \varphi_{\alpha \beta})$. 
With these
definitions one can now define a notion of connection on the horizontally $\maL$-twisted bundle in complete
analogy with the defnitions for the twisted bundles. If $\nabla$  is such a connection and $\varOmega$ is the curvature $3$-form of the gerbe $\maL$, one defines $\Ch_{\maL}(\nabla)$ -- a closed form in $\left(\Omega^*(M), d_{\pi^*\varOmega}\right)_{\bullet}$.
The analogues of Propositions \ref{global curvature} and \ref{twchern} hold in this context with the same proofs.

\section{Projective families and the analytic index}\label{projective}

\subsection{Families of pseudodifferential operators}
Here we collect several facts about the (untwisted) families of pseudodifferential operators.

%
%
%
%
Let $\pi \colon X \to Y$ be a smooth fibration and $E$ a vector bundle on $X$.

We denote by $\Psi_\maL^m (X|Y ; E)$ the space of classical  fiberwise pseudodifferential operators of order $\leq m$ on   $\pi$, acting on the sections of  the vector bundle $E$. As usual, we set
$$
\Psi (X|Y ; E) := \bigcup_{m\in \Z} \Psi^m (X|Y ; E)
$$
 and
$$
 \Psi^{-\infty} (X|Y; E) := \bigcap_{m\in \Z} \Psi^m (X|Y ; E).
$$

Recall that composition endows each $\Psi(X|Y ; E)$ with the structure of a filtered algebra and that $\Psi^{-\infty} (X|Y; E)$ is an ideal in this algebra. $\Psi(X|Y ; E)$ is also a module over $C^{\infty}(Y)$ and the composition is
$C^{\infty}(Y)$-linear.

We have the following elementary general result:
\begin{lemma} Assume $\pi\colon X\to Y$ is a smooth fibration, $E$ a vector bundle on $X$, $L$ a line bundle on $Y$.
Define a map $\chi_L\colon \Psi  (X|Y ; E) \to \Psi (X|Y ; E\otimes \pi^* L )$ by
\[
\chi_L(D) (e\otimes \pi^* (l)) = D(e)\otimes \pi^*l
\]
for $D\in \Psi_\maL (X|Y ; \maE)$, $e \in \Gamma_c(E)$, $l \in \Gamma(L)$. Then $\chi_L$ is a well-defined  isomorphism of  algebras and $C^{\infty}(Y)$-modules.
\[
\chi_{L_1 \otimes L_2} = \chi_{L_2} \circ \chi_{L_1}
\]
\end{lemma}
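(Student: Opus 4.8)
The plan is to reduce everything to the one elementary fact already recorded just above the statement: fiberwise pseudodifferential operators are $C^{\infty}(Y)$-linear, equivalently they commute with multiplication by functions pulled back from $Y$. So tensoring with a bundle pulled back from $Y$ is ``invisible'' to them. First I would establish that $\chi_L$ is well defined. Since $L$ is a line bundle on $Y$, cover $Y$ by open sets $V$ over which $L$ has a frame $s_V$; then over $\pi^{-1}(V)$ there is a canonical isomorphism $E\otimes\pi^*L\cong E$ carrying $e\otimes\pi^*s_V$ to $e$. Any section of $E\otimes\pi^*L$ over $\pi^{-1}(V)$ is a finite sum $\sum_i e_i\otimes\pi^*l_i$ with $l_i=f_i s_V$, $f_i\in C^{\infty}(V)$, hence equals $\left(\sum_i \pi^*f_i\,e_i\right)\otimes\pi^*s_V$ and is therefore determined by the section $\sigma_V:=\sum_i \pi^*f_i\,e_i$ of $E$ over $\pi^{-1}(V)$. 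The defining formula sends it to $D(\sigma_V)\otimes\pi^*s_V$; consistency of this with the naive rule $e_i\otimes\pi^*l_i\mapsto D(e_i)\otimes\pi^*l_i$ is exactly the identity $\pi^*f_i\,D(e_i)=D(\pi^*f_i\,e_i)$, i.e.\ the $C^{\infty}(Y)$-linearity of $D$. Likewise, on an overlap the frames differ by a transition function $g\in C^{\infty}(V\cap V')$, so $\sigma_V=\pi^*g\,\sigma_{V'}$, and the two local recipes agree precisely because $D(\pi^*g\,\sigma_{V'})=\pi^*g\,D(\sigma_{V'})$. Hence $\chi_L(D)$ is a single globally defined operator on $\Gamma_c(E\otimes\pi^*L)$.

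Next I would check that $\chi_L$ preserves the relevant classes of operators: under the identification $E\otimes\pi^*L\cong E$ over $\pi^{-1}(V)$ the operator $\chi_L(D)$ is literally the restriction of $D$, so being classical fiberwise pseudodifferential of order $\le m$, and being smoothing, are local properties that transfer directly. Thus $\chi_L$ restricts to maps $\Psi^m(X|Y;E)\to\Psi^m(X|Y;E\otimes\pi^*L)$ and $\Psi^{-\infty}(X|Y;E)\to\Psi^{-\infty}(X|Y;E\otimes\pi^*L)$.

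The algebraic assertions then come out of the defining formula evaluated on decomposable sections. For composition, $(D_1D_2)(e)\otimes\pi^*l=D_1(D_2(e))\otimes\pi^*l=\chi_L(D_1)\bigl(D_2(e)\otimes\pi^*l\bigr)=\chi_L(D_1)\chi_L(D_2)(e\otimes\pi^*l)$, so $\chi_L$ is an algebra homomorphism; and $\chi_L(\pi^*f\cdot D)=\pi^*f\cdot\chi_L(D)$ is read straight off the formula, giving $C^{\infty}(Y)$-linearity. For the composition law in the bundle variable, applying $\chi_{L_1\otimes L_2}(D)$ to $e\otimes\pi^*(l_1\otimes l_2)$ and $(\chi_{L_2}\circ\chi_{L_1})(D)$ to $(e\otimes\pi^*l_1)\otimes\pi^*l_2$ both produce $D(e)$ tensored with the same element, so the two maps agree under the associativity isomorphism $E\otimes\pi^*L_1\otimes\pi^*L_2\cong E\otimes\pi^*(L_1\otimes L_2)$; hence $\chi_{L_1\otimes L_2}=\chi_{L_2}\circ\chi_{L_1}$ by the well-definedness already proved. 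Since $\chi_{\mathcal{O}_Y}=\id$ under the canonical trivialization, taking $(L_1,L_2)=(L,L^{-1})$ and then $(L^{-1},L)$, and using the canonical trivialization of $L\otimes L^{-1}$, exhibits $\chi_{L^{-1}}$ as a two-sided inverse of $\chi_L$; so $\chi_L$ is an isomorphism of algebras and of $C^{\infty}(Y)$-modules.

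The only point that genuinely needs care — the \emph{main obstacle} — is the well-definedness in the first step: passing from a formula given on locally decomposable sections to a single globally defined pseudodifferential operator. Every part of that argument (local definition, gluing over overlaps, consistency with the naive formula) rests on the $C^{\infty}(Y)$-linearity of fiberwise operators, which is exactly why that property is recorded immediately before the lemma.
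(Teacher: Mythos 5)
The paper states this lemma as an ``elementary general result'' immediately after recalling that $\Psi(X|Y;E)$ is a $C^{\infty}(Y)$-module with $C^{\infty}(Y)$-linear composition, and gives no proof of its own, so there is nothing to compare against. Your argument is correct and is the expected one: you rightly identify $C^{\infty}(Y)$-linearity of fiberwise operators as the single fact behind well-definedness (both for collapsing local decomposable expressions and for gluing over overlaps of trivializing charts for $L$), after which order preservation is a local matter under the trivializing identification $E\otimes\pi^*L\cong E$, the algebra and module homomorphism properties and the composition law $\chi_{L_1\otimes L_2}=\chi_{L_2}\circ\chi_{L_1}$ follow by direct evaluation on decomposables, and invertibility drops out of that law applied to $(L,L^{-1})$ together with $\chi_{\text{trivial}}=\id$.
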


If we have two vector bundles $E$, $E'$ on $X$ we denote by $\Psi(X|Y; E, E')$ the set of fiberwise pseudodifferential
operators $\Gamma_c(E) \to \Gamma(E')$. For $L$ -- line bundle on $Y$ we again  have the isomorphism of $C^{\infty}(Y)$-modules $\chi_L \colon   \Psi(X|Y; E, E') \to \Psi(X|Y; E\otimes \pi^*L, E'\otimes \pi^*L)$ defined
by the same formula.

We have the vertical cotangent bundle $T^*(X|Y)= T^*X/(Ker \pi_*)^{\perp}$. $\mathring{T}^*(X|Y)$ denotes
(the total space of) this bundle with the zero section removed, and $p \colon \mathring{T}^*(X|Y) \to X$ is the natural projection.
Recall that for $P\in \Psi^m (X|Y ; E, E')$    the principal symbol $\sigma_m(P)$ is an $m$-homogeneous smooth section over  $\mathring{T}^*(X|Y) $  of vector bundle $p^*\Hom( E, E')$. Then identifying canonically isomorphic bundles $\Hom( E, E')$ and  $\Hom( E\otimes \pi^*L, E'\otimes \pi^*L)$ we have $\sigma_m(P) =\sigma_m(\chi_L(P))$.

\subsection{Projective families}\label{prfa}

Let $\pi: M \to B$ be a smooth fibration with compact fibers. Let $\maL$ be a gerbe on $B$ such that $\pi^*[\maL]$ is a torsion class in $H^3(M, \Z)$. Let $\maE$ be a horizontally $\maL$-twisted bundle  on $M$, cf. Section \ref{hotwbu}.
We fix a descent datum $(U_{\alpha}, \maL_{\alpha\beta}, \mu_{\alpha\beta\gamma}, \maE_\alpha, \varphi_{\alpha \beta})$ for $\maE$.
  For any $(\alpha, \beta)\in \Lambda^2$ with $U_{\alpha\beta}\not = \emptyset$
we have an isomorphism of filtered  algebras, respecting the $C^{\infty}(U_{\alpha \beta})$-module structure:
\begin{equation}\label{phi}
 \phi_{\alpha\beta}\colon \Psi (\pi^{-1}U_{\alpha\beta}|U_{\alpha\beta} ; \maE_\beta)\rightarrow \Psi (\pi^{-1}U_{\alpha\beta}|U_{\alpha\beta} ; \maE_\alpha),
\end{equation}

It is defined as the composition
\[
\Psi(\pi^{-1}U_{\alpha\beta}|U_{\alpha\beta} ; \maE_\beta) \overset{\psi_{\alpha \beta}}\to 
\Psi (\pi^{-1}U_{\alpha\beta}|U_{\alpha\beta} ; \maE_\beta \otimes \pi^* \maL_{\alpha \beta}) \overset{\varphi_{\alpha \beta}}\to \Psi (\pi^{-1}U_{\alpha\beta}|U_{\alpha\beta} ; \maE_\alpha)
\]
where $\psi_{\alpha \beta} =\chi_{\maL_{\alpha \beta}}^{-1}=\chi_{\maL_{\beta \alpha }}$.

Recall (cf. \cite{bgv}) that  for every $\alpha \in \Lambda$ we have an infinite dimensional bundle $\pi_* \maE_{\alpha}$
on $U_{\alpha}$ defined by $\Gamma(V, \pi_*\maE_{\alpha}) = \Gamma(\pi^{-1}V, \maE_{\alpha})$, $V \subset U_{\alpha}$.
Over $U_{\alpha \beta}$ we have isomorphisms $\pi_*\varphi_{\alpha \beta} \colon \pi_*\maE_{\alpha} \otimes \maL_{\alpha \beta} \to \pi_* \maE_{\beta}$ defined by
\[
\pi_*\varphi_{\alpha \beta} (\xi \otimes l) = \varphi (\xi \otimes \pi^*(l)).
\]
Here $\xi \in \Gamma(U_{\alpha \beta}, \pi_*\maE_{\alpha})= \Gamma(\pi^{-1}U_{\alpha \beta}, \maE_{\alpha})$, $l \in \Gamma(U_{\alpha \beta}, \maL_{\alpha \beta})$.

Note that the isomorphisms $\pi_*\varphi_{\alpha \beta} \colon \pi_*\maE_{\alpha} \otimes \maL_{\alpha \beta} \to \pi_* \maE_{\beta}$ induce the isomorphisms
\[(\pi_*\varphi_{\alpha \beta})^* \colon \End(\pi_* \maE_{\beta})\to
\End(\pi_*\maE_{\alpha} \otimes \maL_{\alpha \beta})\cong \End(\pi_*\maE_{\alpha})\]
over $U_{\alpha \beta}$. The restriction of this isomorphism to $\Psi (\pi^{-1}U_{\alpha \beta}|U_{\alpha \beta}, \maE_\beta) \subset \End(\pi_* \maE_{\beta})$ coincides with the isomorphism $\phi_{\alpha \beta} \colon \Psi (\pi^{-1}U_{\alpha \beta}|U_{\alpha \beta}, \maE_\beta) \to \Psi (\pi^{-1}U_{\alpha \beta}|U_{\alpha \beta}, \maE_\alpha)$. Since the isomorphisms $(\pi_*\varphi_{\alpha \beta})^*$ satisfy the natural cocycle identity
we have the following:

\begin{lemma}\label{Comp.Local}
The isomorphisms $\phi_{\alpha \beta}$ satisfy
\[\phi_{\alpha\beta}\circ \phi_{\beta\gamma} = \phi_{\alpha\gamma}
\]
whenever $U_{\alpha\beta\gamma}\not = \emptyset$.
\end{lemma}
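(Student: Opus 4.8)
The plan is to deduce the cocycle identity for the $\phi_{\alpha\beta}$ from the one satisfied by the bundle isomorphisms $\varphi_{\alpha\beta}$ in the descent datum for $\maE$. Recall from the discussion preceding the statement that $\phi_{\alpha\beta}$ is nothing but the restriction to fiberwise pseudodifferential operators of the algebra isomorphism
\[
(\pi_*\varphi_{\alpha\beta})^* \colon \End(\pi_*\maE_\beta) \longrightarrow \End(\pi_*\maE_\alpha\otimes\maL_{\alpha\beta}) \cong \End(\pi_*\maE_\alpha),
\]
i.e. conjugation by the infinite-dimensional bundle isomorphism $\pi_*\varphi_{\alpha\beta}\colon \pi_*\maE_\alpha\otimes\maL_{\alpha\beta}\to\pi_*\maE_\beta$, post-composed with the canonical identification coming from $\End(\maL_{\alpha\beta})\simeq U_{\alpha\beta}\times\C$. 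So it suffices to establish $(\pi_*\varphi_{\alpha\beta})^*\circ(\pi_*\varphi_{\beta\gamma})^* = (\pi_*\varphi_{\alpha\gamma})^*$ over $U_{\alpha\beta\gamma}$; restricting this identity to the subalgebras $\Psi(\pi^{-1}U_{\alpha\beta\gamma}|U_{\alpha\beta\gamma};\maE_{\bullet})$ then gives the lemma, since each $(\pi_*\varphi_{--})^*$ preserves the class of pseudodifferential operators (being $C^\infty$-linear, order- and symbol-preserving, as already recorded for the maps $\chi_L$ and for the $\phi_{\alpha\beta}$).

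To prove the cocycle identity for the conjugations I would apply the fiberwise direct image $\pi_*$ to the commuting square defining the descent datum for $\maE$ (the diagram with vertices $\maE_\alpha\otimes\pi^*\maL_{\alpha\beta}\otimes\pi^*\maL_{\beta\gamma}$, etc.). Using that $\pi_*$ is functorial and that $\pi_*(\,\cdot\otimes\pi^*L)\cong(\pi_*\,\cdot)\otimes L$ for a line bundle $L$ on $B$ (which is exactly how $\pi_*\varphi_{\alpha\beta}$ is defined), one obtains
\[
\pi_*\varphi_{\beta\gamma}\circ(\pi_*\varphi_{\alpha\beta}\otimes\id_{\maL_{\beta\gamma}}) \;=\; \pi_*\varphi_{\alpha\gamma}\circ(\id_{\pi_*\maE_\alpha}\otimes\mu_{\alpha\beta\gamma})
\]
as isomorphisms $\pi_*\maE_\alpha\otimes\maL_{\alpha\beta}\otimes\maL_{\beta\gamma}\to\pi_*\maE_\gamma$ over $U_{\alpha\beta\gamma}$. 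Conjugating an element of $\End(\pi_*\maE_\gamma)$ first by $\pi_*\varphi_{\beta\gamma}$ and then by $\pi_*\varphi_{\alpha\beta}\otimes\id$ therefore yields the same result as conjugating by $\pi_*\varphi_{\alpha\gamma}$ and then by $\id_{\pi_*\maE_\alpha}\otimes\mu_{\alpha\beta\gamma}$; but under the canonical trivializations $\End(\maL_{\alpha\beta}\otimes\maL_{\beta\gamma})\simeq\C\simeq\End(\maL_{\alpha\gamma})$ the isomorphism $\mu_{\alpha\beta\gamma}$ becomes an invertible scalar, and conjugation by an invertible scalar is the identity automorphism of $\End(\pi_*\maE_\alpha)$. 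Hence $(\pi_*\varphi_{\alpha\beta})^*\circ(\pi_*\varphi_{\beta\gamma})^* = (\pi_*\varphi_{\alpha\gamma})^*$, and the lemma follows by restriction as above.

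An essentially equivalent route works directly from the definition $\phi_{\alpha\beta}=\varphi_{\alpha\beta}\circ\psi_{\alpha\beta}$ with $\psi_{\alpha\beta}=\chi_{\maL_{\beta\alpha}}$: the composition law $\chi_{L_1\otimes L_2}=\chi_{L_2}\circ\chi_{L_1}$ merges the two $\psi$-factors, and the commuting triangle for the $\varphi_{\alpha\beta}$ (again with a $\mu_{\alpha\beta\gamma}$ insertion that is a scalar, hence invisible after applying $\chi$) merges the $\varphi$-factors; matching the two computations gives $\phi_{\alpha\beta}\circ\phi_{\beta\gamma}=\phi_{\alpha\gamma}$. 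The only point requiring genuine care in either approach is the line-bundle bookkeeping — verifying that the $\mu_{\alpha\beta\gamma}$ of the descent diagram is absorbed without trace once one passes to endomorphism algebras via $\End(\maL)\simeq\C$, and that $\pi_*$ commutes appropriately with the pulled-back tensor factors $\pi^*\maL_{\alpha\beta}$ (which is where the horizontal-twisting hypothesis is used); everything else is formal functoriality.
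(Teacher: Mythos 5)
Your argument is correct and is essentially the paper's own (the paper simply remarks that $(\pi_*\varphi_{\alpha\beta})^*$ "satisfy the natural cocycle identity" and that $\phi_{\alpha\beta}$ is its restriction to the pseudodifferential subalgebra); you have just filled in the step the paper leaves implicit, namely applying $\pi_*$ to the descent-datum square and observing that $\mu_{\alpha\beta\gamma}$ becomes a scalar under $\End(\maL_{\alpha\beta}\otimes\maL_{\beta\gamma})\simeq\C\simeq\End(\maL_{\alpha\gamma})$ and so acts trivially by conjugation.
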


Recall that the isomorphisms $\varphi_{\alpha \beta}$ induce the natural isomorphisms $\varphi_{\alpha \beta}^* \colon \End(\maE_{\beta}) \to \End(\maE_{\alpha})$. Then
\begin{equation}\label{symbolinv}
\sigma_m \circ \phi_{\alpha \beta} = p^*\left(\varphi_{\alpha \beta}^*\right) \circ \sigma_m
\end{equation}
\begin{definition}
A fiberwise pseudodifferential operator $P$ of order $\leq m$ with coefficients in the horizontally $\maL$-twisted vector bundle $\maE$ is a collection $\{P_\alpha\}_{\alpha\in \Lambda}$,  $P_\alpha \in \Psi^m (\pi^{-1}U_\alpha|U_\alpha ; \maE_\alpha)$ such that
$$
P_\alpha = \phi_{\alpha\beta} (P_\beta).
$$
where $\phi_{\alpha\beta}$ is defined in Equation \ref{phi}.
The space of fiberwise pseudodifferential operators of order $\leq m$, with coefficients in the $\pi^*\maL$-twisted vector bundle $\maE$, is denoted by $\Psi_\maL^m (M|B ; \maE)$.
\end{definition}
Note that the equation \eqref{symbolinv} implies that if $P=\{P_{\alpha}\} \in \Psi_\maL^m (M|B ; \maE)$ then
the collection $\sigma_m(P_{\alpha})$ defines a section of the (untwisted) bundle $p^*\End(\maE)$. We will call this
section the principal symbol of $P= \{P_{\alpha}\}$

\begin{remark}
We define in the same way the space $\Psi_\maL^m (M|B ; \maE, \maE')$ of fiberwise pseudodifferential operators of order $\leq m$, from the horizontally $\maL$-twisted vector bundle $\maE$ to the horizontally $\maL$-twisted vector bundle $\maE'$. In particular $\Psi_\maL^m (M|B ; \maE, \maE) = \Psi_\maL^m (M|B ; \maE)$. We also have a principal symbol map $\sigma_m \colon \Psi_\maL^m (M|B ; \maE, \maE) \to p^* \Hom( \maE, \maE')$.
\end{remark}

We set
$$
\Psi_\maL (M|B ; \maE):= \bigcup_{m\in \Z} \Psi_\maL^m (M|B ; \maE) \text{ and } \Psi_\maL^{-\infty} (M|B ; \maE):= \bigcap_{m\in \Z} \Psi_\maL^m (M|B ; \maE).
$$

Introduce now a composition in $\Psi_\maL (M|B ; \maE)$ by

\[
\{P_\alpha\} \circ \{Q_{\alpha}\} = \{P_{\alpha} Q_{\alpha}\}
\]
Since $\phi_{\alpha \beta}$ are algebra isomorphisms the right hand side of this equality defines an element
in $\Psi_\maL (M|B ; \maE)$.
\begin{proposition}\label{Comp}
The composition of operators is $C^{\infty}(B)$-linear and endows $\Psi_\maL (M|B ; \maE)$  with the structure of associative   algebra; $\Psi_\maL^{-\infty} (M|B ; \maE)$ is an ideal in $\Psi_\maL (M|B ; \maE)$.
\end{proposition}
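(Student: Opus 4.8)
The plan is to reduce the entire statement to the corresponding facts for the ordinary algebras $\Psi(\pi^{-1}U_\alpha|U_\alpha;\maE_\alpha)$ recalled just before Lemma \ref{Comp.Local}, the only additional input being that each transition map $\phi_{\alpha\beta}$ of \eqref{phi} is an isomorphism of filtered algebras that moreover respects the $C^{\infty}(U_{\alpha\beta})$-module structure, together with the cocycle identity $\phi_{\alpha\beta}\circ\phi_{\beta\gamma}=\phi_{\alpha\gamma}$ of Lemma \ref{Comp.Local}.

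First I would verify that $\{P_\alpha\}\circ\{Q_\alpha\}=\{P_\alpha Q_\alpha\}$ genuinely produces an element of $\Psi_\maL(M|B;\maE)$, i.e. that over each $U_{\alpha\beta}$ the collection $\{P_\alpha Q_\alpha\}$ satisfies the gluing relation $P_\alpha Q_\alpha=\phi_{\alpha\beta}(P_\beta Q_\beta)$. This is immediate from multiplicativity of $\phi_{\alpha\beta}$, since $\phi_{\alpha\beta}(P_\beta Q_\beta)=\phi_{\alpha\beta}(P_\beta)\,\phi_{\alpha\beta}(Q_\beta)=P_\alpha Q_\alpha$. The same remark, applied to the componentwise inclusions in the local algebras, gives $\Psi^m_\maL\circ\Psi^{m'}_\maL\subseteq\Psi^{m+m'}_\maL$, so $\circ$ respects the filtration.

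Associativity and $\C$-bilinearity are then inherited termwise from the local algebras: $(\{P_\alpha\}\{Q_\alpha\})\{R_\alpha\}=\{(P_\alpha Q_\alpha)R_\alpha\}=\{P_\alpha(Q_\alpha R_\alpha)\}=\{P_\alpha\}(\{Q_\alpha\}\{R_\alpha\})$, and likewise distributivity. For the $C^{\infty}(B)$-module structure one sets $f\cdot\{P_\alpha\}=\{(f|_{U_\alpha})\,P_\alpha\}$ for $f\in C^{\infty}(B)$; this is well defined because $\phi_{\alpha\beta}$ is $C^{\infty}(U_{\alpha\beta})$-linear, so over $U_{\alpha\beta}$ one has $\phi_{\alpha\beta}((f|_{U_\beta})P_\beta)=(f|_{U_{\alpha\beta}})\,\phi_{\alpha\beta}(P_\beta)=(f|_{U_\alpha})P_\alpha$. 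Since composition in each $\Psi(\pi^{-1}U_\alpha|U_\alpha;\maE_\alpha)$ is $C^{\infty}(U_\alpha)$-bilinear, $C^{\infty}(B)$-bilinearity of $\circ$ follows at once.

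For the ideal assertion I would first note that the gluing conditions defining $\Psi^{-\infty}_\maL(M|B;\maE)$ are consistent, because $\phi_{\alpha\beta}$, being an isomorphism of filtered algebras, carries $\Psi^{-\infty}(\pi^{-1}U_{\alpha\beta}|U_{\alpha\beta};\maE_\beta)$ onto $\Psi^{-\infty}(\pi^{-1}U_{\alpha\beta}|U_{\alpha\beta};\maE_\alpha)$. Then if $\{P_\alpha\}\in\Psi_\maL(M|B;\maE)$ and $\{Q_\alpha\}\in\Psi^{-\infty}_\maL(M|B;\maE)$, each $P_\alpha Q_\alpha$ and $Q_\alpha P_\alpha$ lies in $\Psi^{-\infty}(\pi^{-1}U_\alpha|U_\alpha;\maE_\alpha)$ because the latter is a two-sided ideal, whence $\{P_\alpha\}\{Q_\alpha\}$ and $\{Q_\alpha\}\{P_\alpha\}$ belong to $\Psi^{-\infty}_\maL(M|B;\maE)$. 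The one step meriting a little care — though I do not expect a genuine obstacle — is checking that these structures are independent of the chosen descent datum and stable under passing to refinements; this is handled exactly as in the untwisted theory, using Lemma \ref{Comp.Local} to match up the relevant transition data, so the proposition ultimately reduces to bookkeeping over the pieces of the cover.
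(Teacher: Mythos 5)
Your proposal is correct and follows the same approach the paper takes: the paper gives no explicit proof, observing only (just before the Proposition) that well-definedness follows because each $\phi_{\alpha\beta}$ is an algebra isomorphism, and your argument simply spells out the termwise bookkeeping that remark compresses. The one small comment is that the independence of the chosen descent datum, which you flag at the end, is not actually part of the statement of the Proposition (which fixes a descent datum); the paper treats that point in a separate remark at the end of the subsection, so it is fine to mention but not required here.
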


We can now define the algebra of forms on $B$ with values in $\Psi_\maL(M|B; \maE)$ by
\[
\Omega^*\left(B, \Psi_\maL(M|B; \maE)\right)= \Omega^*(B) \otimes_{C^{\infty}(B)} \Psi_\maL(M|B; \maE).
\]
Recall that for every $\alpha$ and $V\subset U_{\alpha}$ we have a fiberwise trace $\Tr_{\alpha} \colon \Psi^{-\infty}(\pi^{-1}V|V; \maE_{\alpha}) \to C^{\infty}(V)$. It is easy to see that for $ P \in
\Psi^{-\infty}(\pi^{-1}U_{\alpha \beta}|U_{\alpha \beta}; \maE_{\beta})$
\[
\Tr_{\alpha} \phi_{\alpha \beta} (P) = \Tr_{\beta} (P).
\]

We therefore obtain a well defined map $\Tr \colon \Psi_\maL^{-\infty}(M|B, \maE) \to C^{\infty}(B)$
by setting
$\left.\Tr \{P_{\alpha}\}\right|_{U_{\alpha}}= \Tr_{\alpha}(P_{\alpha})$.
This trace is a $C^{\infty}(B)$-module map satisfying $\Tr[A, B]=0$.
It extends naturally to define a map $\Tr \colon \Omega^*\left(B, \Psi_\maL^{-\infty}(M|B; \maE)\right) \to
\Omega^*\left(B\right)$.

If the bundle $\maE$ is $\Z_2$ graded we have a similarly defined supertrace
\[\STr \colon \Omega^*\left(B, \Psi_\maL^{-\infty}(M|B; \maE)\right) \to
\Omega^*\left(B\right).
\]

Note that the definition of the $\Psi_\maL (M|B ; \maE)$ depends on the descent datum for $\maE$. It is straightforward however to see that an isomorphism of descent data defines canonically an isomorphism of the corresponding
bundles of algebras.

\subsection{Analytic index}\label{analytic}
\begin{definition}
Let $m\geq 0$ be fixed. A fiberwise pseudodifferential operator $P\in \Psi_\maL^m (M|B ; \maE, \maE')$ is fiberwise elliptic if the principal symbol $\sigma_m(P) \in \Gamma(p^*\Hom(\maE, \maE'))$is an isomorphism.
\end{definition}

We say that $Q \in \Psi_\maL^{-m} (M|B ; \maE', \maE)$ is a parametrix of $P\in \Psi_\maL^m (M|B ; \maE, \maE')$ if
$PQ-1 \in \Psi_\maL^{-\infty} (M|B ; \maE', \maE')$ and $QP-1 \in \Psi_\maL^{-\infty} (M|B ; \maE, \maE)$.

\begin{lemma} Every elliptic $P\in \Psi_\maL^m (M|B ; \maE, \maE')$ has a parametrix.
\end{lemma}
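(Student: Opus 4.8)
The plan is to construct the parametrix locally, using the classical construction on each piece of the descent datum, and then verify that the local parametrices assemble into a genuine element of the twisted algebra. Concretely, fix the descent datum $(U_{\alpha}, \maL_{\alpha\beta}, \mu_{\alpha\beta\gamma}, \maE_\alpha, \varphi_{\alpha \beta})$ used to define $\Psi_\maL(M|B;\maE)$, and write $P = \{P_\alpha\}$ with $P_\alpha \in \Psi^m(\pi^{-1}U_\alpha|U_\alpha;\maE_\alpha)$ satisfying $P_\alpha = \phi_{\alpha\beta}(P_\beta)$. Since $P$ is fiberwise elliptic, its principal symbol $\sigma_m(P)$ is an isomorphism in $\Gamma(p^*\Hom(\maE,\maE'))$; restricting to $U_\alpha$, this says each $P_\alpha$ is a fiberwise elliptic operator in the ordinary (untwisted) sense. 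By the standard theory of families of pseudodifferential operators (cf.\ \cite{bgv}), each $P_\alpha$ admits a parametrix $Q_\alpha \in \Psi^{-m}(\pi^{-1}U_\alpha|U_\alpha;\maE_\alpha', \maE_\alpha)$, i.e.\ $P_\alpha Q_\alpha - 1$ and $Q_\alpha P_\alpha - 1$ lie in $\Psi^{-\infty}$.

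The issue is that an arbitrary choice of local parametrices $Q_\alpha$ need not satisfy the compatibility $Q_\alpha = \phi_{\alpha\beta}(Q_\beta)$, so $\{Q_\alpha\}$ is not a priori an element of $\Psi_\maL^{-m}(M|B;\maE',\maE)$. To fix this I would first observe that, since $\sigma_{-m}(Q_\alpha) = \sigma_m(P_\alpha)^{-1}$ and the symbol of $P$ is globally defined on $p^*\Hom(\maE,\maE')$, the discrepancy $\phi_{\alpha\beta}(Q_\beta) - Q_\alpha$ has order $\le -m-1$; more importantly, $\phi_{\alpha\beta}(Q_\beta)$ is itself a parametrix for $P_\alpha$ over $U_{\alpha\beta}$ because $\phi_{\alpha\beta}$ is an algebra isomorphism carrying $P_\beta$ to $P_\alpha$ (and using Lemma \ref{Comp.Local}, $\phi_{\alpha\beta}\circ\phi_{\beta\gamma} = \phi_{\alpha\gamma}$). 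The clean way to produce a compatible parametrix is: first build a compatible symbolic inverse — the symbol $\sigma_m(P)^{-1}$ is globally defined, so one can choose $Q^{(0)} = \{Q^{(0)}_\alpha\} \in \Psi_\maL^{-m}(M|B;\maE',\maE)$ with $\sigma_{-m}(Q^{(0)}) = \sigma_m(P)^{-1}$ by patching local quantizations with a partition of unity pulled back from $B$ (the composition and module structures being $C^\infty(B)$-linear, partitions of unity from $B$ preserve the twisted algebra). Then $R := 1 - P Q^{(0)} \in \Psi_\maL^{-1}(M|B;\maE',\maE')$, and the Neumann series $Q := Q^{(0)}\sum_{j\ge 0} R^j$ (made sense of by asymptotic summation within each fiber, again patched $C^\infty(B)$-linearly) gives a genuine element of $\Psi_\maL^{-m}(M|B;\maE',\maE)$ with $PQ - 1 \in \Psi_\maL^{-\infty}$. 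A symmetric argument on the other side, followed by the usual remark that a left and a right parametrix agree modulo $\Psi_\maL^{-\infty}$, yields the two-sided parametrix.

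The main obstacle, and the only place requiring genuine care, is verifying that the standard asymptotic-summation / Neumann-series construction can be carried out \emph{within} the twisted algebra rather than piece by piece — that is, checking that the partition of unity used to patch local symbol quantizations and to sum the asymptotic series can be taken to be pulled back from $B$, so that all operations respect the gluing isomorphisms $\phi_{\alpha\beta}$. This works precisely because $\Psi_\maL(M|B;\maE)$ is a $C^\infty(B)$-module, the composition is $C^\infty(B)$-bilinear (Proposition \ref{Comp}), and the fibers are compact so that fiberwise asymptotic summation is available; multiplying a compatible family $\{T_\alpha\}$ by a function $\pi^*\chi$ with $\chi \in C^\infty(B)$ again yields a compatible family. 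Granting this, the remaining steps are the verbatim classical arguments for existence of a parametrix of an elliptic operator, and I would simply cite them.
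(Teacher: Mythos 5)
Your proposal is correct in substance, but it takes a longer route than the paper does, and — interestingly — you actually make the observation that would let you short-circuit your own argument. The paper's proof is a one-step averaging: take arbitrary local parametrices $R_\alpha$ for $P_\alpha$, pick a partition of unity $\rho_\alpha$ on $B$ subordinate to $\{U_\alpha\}$, and set
\[
R_\alpha' \;=\; \sum_{\beta} \rho_\beta\,\phi_{\alpha\beta}(R_\beta).
\]
The cocycle identity $\phi_{\alpha\gamma}\circ\phi_{\gamma\beta}=\phi_{\alpha\beta}$ (Lemma \ref{Comp.Local}) together with the $C^\infty(B)$-linearity of $\phi_{\alpha\beta}$ gives compatibility $\phi_{\alpha\beta}(R_\beta')=R_\alpha'$, and the fact that each $\phi_{\alpha\beta}(R_\beta)$ is itself a parametrix for $P_\alpha=\phi_{\alpha\beta}(P_\beta)$ over $U_{\alpha\beta}$ — the very fact you note in your second paragraph — shows that the convex combination $R_\alpha'$ is still a parametrix for $P_\alpha$. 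You noticed exactly this ingredient but then detoured: you build a compatible symbolic inverse $Q^{(0)}$ first (one averaging step) and then run a Neumann series whose asymptotic summation itself needs to be made $\phi$-compatible (a second, implicit averaging step that you assert rather than spell out). Both approaches buy the same result; yours is more in line with the classical textbook construction of a parametrix and generalizes to settings where one really does need to track symbol expansions, while the paper's exploits the algebra-isomorphism structure of the $\phi_{\alpha\beta}$ to skip symbolic calculus altogether. The one place your write-up is genuinely soft is the claim that the asymptotic summation ``made sense of \ldots patched $C^\infty(B)$-linearly'' yields a compatible family: what you really need to say is that arbitrary local asymptotic sums $S_\alpha$ of $\sum_j R_\alpha^j$ can again be averaged by $\sum_\beta \rho_\beta\,\phi_{\alpha\beta}(S_\beta)$, using that each $\phi_{\alpha\beta}(S_\beta)$ is itself an asymptotic sum of $\sum_j R_\alpha^j$ since $\phi_{\alpha\beta}$ is an order-filtration-preserving algebra isomorphism. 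Once you see this, it becomes clear that the same averaging applied one level up — directly to the parametrices — makes the symbol and Neumann-series apparatus unnecessary.
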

\begin{proof}
Construct first a parametrix $R_{\alpha}$ for $P_{\alpha}$. Let $\rho_{\alpha}$ be a partition of
unity subordinate to $\{U_{\alpha}\}$. Then define $R_{\alpha}' = \sum_{\beta} \rho_{\beta} \phi_{\alpha\beta} (R_{\beta})$. Then each $R_{\alpha}'$ is a parametrix for $P_{\alpha}$ and $\phi_{\alpha\beta}(R_{\beta}')=R_{\alpha}'$.
\end{proof}

Let $D\in \Psi_\maL^m (M|B ; \maE, \maE')$ be elliptic. Let $F\in \Psi_\maL^0 (M|B ; \maE, \maE')$ be such that $\sigma_0(F)|_{S^*(M|B)}=\sigma_m(D)|_{S^*(M|B)}$.
 Choose a parametrix
$R$ for $F$.
 Let $U_{D} \in \Psi_\maL^{0} (M|B ; \maE\oplus \maE')$ be an invertible operator such that $U_{D}-\begin{bmatrix} 0 &-R\\
F &0 \end{bmatrix} \in  \Psi_\maL^{-\infty} (M|B ; \maE\oplus \maE')$. An explicit construction of an example of such an
operator is as follows.  Let $S_0=1-RF$, $S_1=1-FR$. Then set $U_{D}=\begin{bmatrix} S_0 &-(1+S_0)R\\F &S_1 \end{bmatrix}$. With such a choice the inverse is given by an explicit formula  $U_{D}^{-1}=\begin{bmatrix} S_0 &(1+S_0)R\\-F &S_1 \end{bmatrix}$.

\begin{definition}\label{defindex}
The index of $D$ is the $K$-theory class of the algebra $\Psi_\maL^{-\infty}(M|B; \maE\oplus \maE')$ defined by
$$
\ind(D) = \left[P_{D} - Q \right] \in K_0(\Psi_\maL^{0} (M|B, \maE\oplus \maE'), \Psi_\maL^{-\infty} (M|B, \maE\oplus \maE')) \cong K_0(\Psi_\maL^{-\infty} (M|B, \maE\oplus \maE')),
$$
where $P_{D}$ and $Q$  are the idempotents given by $P_{D}= U_{D}  \begin{bmatrix} 1 &0 \\0 &0\end{bmatrix} U_{D}^{-1}$ and $Q= \begin{bmatrix}0 &0\\0 &1 \end{bmatrix}$.
\end{definition}


We leave to the reader the standard $K$-theoretic proof that the index is well defined and is stable under the
homotopies of $D$ in the class of elliptic operators in $\Psi_\maL^m (M|B ; \maE, \maE')$.
Assume that the horizontally $\maL$-twisted bundles $\maE$ and $\maE'$ are hermitian (and in particular $\maL$ is unitary) and that fibers of $\pi$ are equipped with smoothly varying volume forms. In this situation for a projective family $D$ we can define an formally adjoint projective family $D^*$ by forming the formal adjoints for each family $D_{\alpha}$.

\begin{lemma}  Then, identifying $K_0(\Psi_\maL^{-\infty} (M|B, \maE\oplus \maE'))$ with $K_0(\Psi_\maL^{-\infty} (M|B, \maE'\oplus \maE))$ we have
\[\ind D^*=-\ind D.\]
\end{lemma}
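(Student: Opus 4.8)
The plan is to reduce the equality to a statement about principal symbols and then to a direct manipulation of the idempotents that define the index.

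First I would invoke the homotopy-invariance of the index and its independence of the auxiliary choices, recalled just above. The formal adjoint $D^*$ has principal symbol $\sigma_m(D^*)=\sigma_m(D)^*$, the fibrewise adjoint taken with respect to the Hermitian metrics on $\maE,\maE'$ and the fibrewise volume forms; this is the only place the Hermitian hypotheses enter, but it is essential (without them $D^*$ is not even defined). On the cosphere bundle $S^*(M|B)$ the fibrewise polar decomposition $\sigma_m(D)=u\,h$, with $u\colon p^*\maE\to p^*\maE'$ a fibrewise isometry and $h=(\sigma_m(D)^*\sigma_m(D))^{1/2}$ positive and invertible on $p^*\maE$, gives a smooth path $t\mapsto h^{\,2t-1}u^*$ of invertible symbols from $\sigma_m(D)^{-1}$ ($t=0$) to $\sigma_m(D)^*$ ($t=1$). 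Hence $D^*$ is homotopic, through elliptic families, to an order-$m$ family $\widehat D\colon \maE'\to\maE$ with $\sigma_m(\widehat D)|_{S^*(M|B)}=\sigma_m(D)^{-1}|_{S^*(M|B)}$, so $\ind D^*=\ind\widehat D$; and $\ind\widehat D$ may be computed from the data $F':=R$ (the chosen parametrix of $F$, which does have principal symbol $\sigma_m(D)^{-1}$ on $S^*(M|B)$), $R':=F$ (a parametrix of $R$, since ``$R$ is a parametrix of $F$'' is symmetric in $R$ and $F$), and a corresponding invertible $U'$.

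With these choices $S_0'=1-R'F'=S_1$ and $S_1'=1-F'R'=S_0$, so the explicit construction gives $U'=\left(\begin{smallmatrix} S_1 & -(1+S_1)F \\ R & S_0 \end{smallmatrix}\right)$ on $\maE'\oplus\maE$, an invertible operator congruent to $\left(\begin{smallmatrix} 0 & -F \\ R & 0 \end{smallmatrix}\right)$ modulo $\Psi^{-\infty}_\maL$. The identification in the statement is conjugation by the flip $\maE'\oplus\maE\to\maE\oplus\maE'$; under it the summand projections are interchanged ($\left(\begin{smallmatrix}1&0\\0&0\end{smallmatrix}\right)\leftrightarrow\left(\begin{smallmatrix}0&0\\0&1\end{smallmatrix}\right)$), and $U'$ is carried to an invertible operator on $\maE\oplus\maE'$ congruent to $\left(\begin{smallmatrix} 0 & R \\ -F & 0 \end{smallmatrix}\right)$ modulo $\Psi^{-\infty}_\maL$, that is, to an invertible lift of the reduction of $U_D^{-1}$. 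By the independence of the index of the choice of invertible lift we may use $U_D^{-1}$ itself. Writing $e=\left(\begin{smallmatrix}1&0\\0&0\end{smallmatrix}\right)$ and $f=\left(\begin{smallmatrix}0&0\\0&1\end{smallmatrix}\right)$ on $\maE\oplus\maE'$, this gives, under the identification, $\ind D^*=\big[\,U_D^{-1}f\,U_D-e\,\big]$. On the other hand $\ind D=[\,U_D e\,U_D^{-1}-f\,]$, and since $U_D e\,U_D^{-1}=1-U_D f\,U_D^{-1}$ we get $-\ind D=[\,f-U_D e\,U_D^{-1}\,]=[\,U_D f\,U_D^{-1}-e\,]$. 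Finally, conjugation by $U_D$ carries $U_D^{-1}f\,U_D-e$ to $f-U_D e\,U_D^{-1}=U_D f\,U_D^{-1}-e$, and conjugation by an invertible operator of $\Psi^{0}_\maL(M|B;\maE\oplus\maE')$ leaves $K_0$-classes of the ideal $\Psi^{-\infty}_\maL(M|B;\maE\oplus\maE')$ unchanged; therefore $\ind D^*=-\ind D$.

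The step requiring the most care is the bookkeeping with the two interchanges that occur --- the identification $K_0(\Psi^{-\infty}_\maL(M|B;\maE'\oplus\maE))\cong K_0(\Psi^{-\infty}_\maL(M|B;\maE\oplus\maE'))$ of the statement versus the internal interchange of the summand projections --- so that the sign comes out as $-1$ and not $+1$; together with checking that the polar-decomposition path genuinely connects $\sigma_m(D)^{-1}$ to $\sigma_m(D^*)$ through invertible symbols, which is precisely where the Hermitian structures on $\maE,\maE'$ and the fibrewise volume forms are used. The $K$-theoretic ingredients themselves --- homotopy-invariance, independence of the auxiliary choices, and conjugation-invariance of $K_0$ --- are the same standard facts already invoked above for the well-definedness of the index.
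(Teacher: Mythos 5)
Your proposal is correct and follows essentially the same route as the paper: the paper deforms $D$ via the polar decomposition so that $\sigma_m(D)|_{S^*(M|B)}$ is an isometry and then observes one may take $U_{D^*}=U_D^{-1}$, which is the same move as your homotopy of $\sigma_m(D^*)$ to $\sigma_m(D)^{-1}$ on the cosphere bundle followed by the choice $F'=R$, $R'=F$. Where the paper simply says ``the statement follows,'' you supply the omitted bookkeeping with the flip, the complementary projections $e$, $f$, and the conjugation-invariance of $K_0$-classes, all of which is exactly right (the phrase ``order-$m$ family'' for $\widehat D$ is a harmless slip, since only the symbol on $S^*(M|B)$ enters the index definition).
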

\begin{proof}
We have
$$
\ind D^* = \left[U_{D^*}  \begin{bmatrix} 0 &0 \\0 &1\end{bmatrix} U_{D^*}^{-1} - \begin{bmatrix}1 &0\\0 &0 \end{bmatrix} \right] \in K_0(\Psi_\maL^{-\infty} (M|B, \maE\oplus \maE')).
$$
By deforming $D$ we may assume that $\sigma_m(D)|_{S^*(M|B)}$ is an isometry. In this case we may choose $U_{D^*}=U_D^{-1}$, and the statement follows.
\end{proof}


\subsection{Chern character of the index}
We continue in the notations of the previous section. We assume that we are given a horizontally $\maL$-twisted
bundle $\maE$ with a connection represented by a descent datum $(U_{\alpha}, \maL_{\alpha \beta}, \mu_{\alpha \beta \gamma}, \maE_{\alpha}, \varphi_{\alpha \beta})$ with connection $(\nabla_\alpha, \nabla_{\alpha \beta}, \omega_{\alpha})$   see the Section \ref{hotwbu}.
Following Mathai and Stevenson \cite{MathaiStevenson}, we describe  in this paragraph a morphism of complexes
$
CC_{\bullet}^-(\Psi_\maL^{-\infty} (M|B;\maE)) \to \left(\Omega^*(M)[u], d_{\varOmega} \right)_{\bullet}.
$

Recall the bundles $\pi_*\maE_{\alpha}$ and isomorphisms $\pi_*\varphi_{\alpha \beta}$ defined in the Section \ref{prfa}. It is easy to see that
$(U_{\alpha}, \maL_{\alpha \beta}, \mu_{\alpha \beta \gamma}, \pi_*\maE_{\alpha}, \pi_* \varphi_{\alpha \beta})$
is a descent datum for an infinite dimensional twisted bundle.
We now proceed to define a connection on this descent datum.

Choose  horizontal distribution i.e. a subbundle $\maH \subset TM$ such that
$
TM = \maH \oplus T (M|B).
$
This choice together with connections $\nabla^{\maE}_{\alpha}$ defines for each $\alpha$ a connection $\nabla_{\alpha}^{\maH}$ as follows:
\[
(\nabla^{\maH}_{\alpha})_X\xi = (\nabla^{\maE}_{\alpha})_{X^{\maH}} \xi
\]
where $X^{\maH}$ is the horizontal lift of $X\in \Gamma(B, TB)$.

\begin{lemma}\label{pi*}
$(\pi_*\varphi_{\alpha \beta})^* \nabla^{\maH}_{\beta} = \nabla^{\maH}_{\alpha} \otimes \id+ \id \otimes \nabla_{\alpha \beta}$
\end{lemma}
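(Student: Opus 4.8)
The plan is to verify the identity fiberwise and pointwise on $U_{\alpha\beta}$ by unwinding the definitions of all the objects involved. Recall that $\nabla^{\maH}_{\alpha}$ is defined by $(\nabla^{\maH}_{\alpha})_X\xi = (\nabla^{\maE}_{\alpha})_{X^{\maH}}\xi$, where $X^{\maH}$ is the horizontal lift of a vector field $X$ on $B$ determined by the chosen splitting $TM = \maH\oplus T(M|B)$, and that $\pi_*\varphi_{\alpha\beta}$ is the isomorphism $\pi_*\maE_{\alpha}\otimes\maL_{\alpha\beta}\to\pi_*\maE_{\beta}$ given on sections by $\pi_*\varphi_{\alpha\beta}(\xi\otimes l) = \varphi_{\alpha\beta}(\xi\otimes\pi^*l)$. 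First I would take a section $\xi\in\Gamma(U_{\alpha\beta},\pi_*\maE_{\alpha}) = \Gamma(\pi^{-1}U_{\alpha\beta},\maE_{\alpha})$, a section $l\in\Gamma(U_{\alpha\beta},\maL_{\alpha\beta})$, and a vector field $X\in\Gamma(TB|_{U_{\alpha\beta}})$, and compute both sides of the claimed equation applied to $\xi\otimes l$ and evaluated along $X$.

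The key computation is this: by the definition of $(\pi_*\varphi_{\alpha\beta})^*\nabla^{\maH}_{\beta}$ as a connection conjugated through the isomorphism, the left-hand side applied to $\xi\otimes l$ in the direction $X$ equals $(\pi_*\varphi_{\alpha\beta})^{-1}\bigl((\nabla^{\maH}_{\beta})_X(\pi_*\varphi_{\alpha\beta}(\xi\otimes l))\bigr)$, which is $(\pi_*\varphi_{\alpha\beta})^{-1}\bigl((\nabla^{\maE}_{\beta})_{X^{\maH}}\varphi_{\alpha\beta}(\xi\otimes\pi^*l)\bigr)$. Now the horizontal lift $X^{\maH}$ is a vector field on $M$, and $\pi^*l$ is a section of $\pi^*\maL_{\alpha\beta}$ over $\pi^{-1}U_{\alpha\beta}$ which is constant along the fibers and, more importantly, its covariant derivative in the horizontal direction $X^{\maH}$ with respect to the pulled-back connection $\pi^*\nabla_{\alpha\beta}$ is $\pi^*(\nabla_{\alpha\beta,X}l)$. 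Applying the compatibility relation \eqref{connectionon} for the honest twisted bundle $\maE$ on $M$ — namely $\varphi_{\alpha\beta}^*\nabla^{\maE}_{\beta} = \nabla^{\maE}_{\alpha}\otimes\id + \id\otimes\pi^*\nabla_{\alpha\beta}$ in the direction $X^{\maH}$ — gives $(\nabla^{\maE}_{\beta})_{X^{\maH}}\varphi_{\alpha\beta}(\xi\otimes\pi^*l) = \varphi_{\alpha\beta}\bigl((\nabla^{\maE}_{\alpha})_{X^{\maH}}\xi\otimes\pi^*l + \xi\otimes(\pi^*\nabla_{\alpha\beta})_{X^{\maH}}\pi^*l\bigr)$. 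Using $(\nabla^{\maE}_{\alpha})_{X^{\maH}}\xi = (\nabla^{\maH}_{\alpha})_X\xi$ and $(\pi^*\nabla_{\alpha\beta})_{X^{\maH}}\pi^*l = \pi^*((\nabla_{\alpha\beta})_X l)$, then peeling off $\pi^*$ through $\pi_*\varphi_{\alpha\beta}$, one recovers exactly $(\nabla^{\maH}_{\alpha}\otimes\id + \id\otimes\nabla_{\alpha\beta})_X(\xi\otimes l)$, which is the right-hand side.

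The only subtle point — and the one place I would be careful — is the claim that the pulled-back connection $\pi^*\nabla_{\alpha\beta}$ on $\pi^*\maL_{\alpha\beta}$ differentiates $\pi^*l$ in the horizontal direction to give $\pi^*(\nabla_{\alpha\beta}l)$; this is exactly the defining property of a pullback connection evaluated on a pullback section (it holds for any vector field on $M$ that is $\pi$-related to $X$, and $X^{\maH}$ is such by construction of the horizontal lift). Since the connection on the descent datum for $\maE$ on $M$ used in defining $\nabla^{\maH}_{\alpha}$ is, by the definition of connection on a horizontally twisted bundle in Section \ref{hotwbu}, precisely of the form $(\nabla^{\maE}_{\alpha},\pi^*\nabla_{\alpha\beta},\pi^*\omega_{\alpha})$, equation \eqref{connectionon} is available in the form stated above. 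I do not expect any real obstacle; the proof is a direct diagram-chase computation, and indeed the paper states this lemma without proof, so a one-line remark that it follows by restricting \eqref{connectionon} to horizontal directions and using the pullback connection property would suffice, though writing out the displayed chain of equalities above makes it fully transparent.
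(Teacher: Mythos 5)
Your proof is correct and is the natural unwinding-of-definitions argument one would give for this lemma, which the paper states without proof; the one subtle point you flagged (that the pullback connection $\pi^*\nabla_{\alpha\beta}$ differentiates $\pi^*l$ along $X^{\maH}$ to $\pi^*(\nabla_{\alpha\beta,X}l)$, because $X^{\maH}$ is $\pi$-related to $X$) is exactly the right thing to be careful about and you handle it correctly.
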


The curvature of the connection $\nabla^{\maH}_{\alpha}$ is a $2$-form $\theta^{\maH}_{\alpha}$ on $U_{\alpha}$
with values in fiberwise differential operators  given by
\[
\theta^{\maH}_{\alpha}(X, Y)=\theta^{\maE}_{\alpha}(X^{\maH}, Y^{\maH})+ (\nabla^{\maE}_{\alpha})_{T(X, Y)}.
\]
where
\begin{equation}\label{th}
T^{\maH}(X, Y) = [X^{\maH}, Y^{\maH}]-[X, Y]^{\maH}, \ X, Y \in \Gamma(B, TB).
\end{equation}
Each $\nabla^{\maH}_{\alpha}$ defines a filtration-preserving derivation $\partial^{\maH}_{\alpha}$  of the algebra of fiberwise pseudodifferential operators
\[
\partial_{\alpha}^{\maH} \colon \Psi(\pi^{-1}U_{\alpha}|U_{\alpha}, \maE_{\alpha}) \rightarrow
\Omega^1(U_{\alpha}, \Psi(\pi^{-1}U_{\alpha}|U_{\alpha}, \maE_{\alpha})) \text{ defined  by }
\partial^{\maH}_{\alpha} (D) = [\nabla_{\alpha}^{\maH}, D].
\]
If $D \in \Psi(\pi^{-1}U_{\alpha \beta}|U_{\alpha \beta}, \maE_{\beta})$ then the result of Lemma \ref{pi*}
implies that  \[\partial^\maH_{\alpha} (\phi_{\alpha \beta} (D))= \phi_{\alpha \beta} (\partial^\maH_{\beta}(D)).\]
Therefore if $\{D_{\alpha}\}$, $D_{\alpha} \in \Psi_\maL(\pi^{-1}U_{\alpha}|U_{\alpha}, \maE_{\alpha})$,
defines an element in $\Psi_\maL^m(M|B, \maE)$ then   $\{\partial_{\alpha}^{\maH}(D_{\alpha})\} \in \Omega^1(B, \Psi_\maL^m(M|B, \maE))$.
We therefore obtain a  derivation
$$
\partial^{\maH} \colon \Psi_\maL(M|B, \maE) \to  \Omega^1(B, \Psi_\maL(M|B, \maE)),
$$
which extends to a derivation of the algebra $\Omega^*(B, \Psi_\maL(M|B, \maE))$.

\begin{lemma} There exists $\theta^{\maH} \in \Omega^2(B, \Psi_\maL^1(M|B, \maE))$ such that
\[
\theta^{\maH}|_{U_{\alpha}}= \theta^{\maH}_{\alpha} +\pi^*\omega_{\alpha}
\]
\end{lemma}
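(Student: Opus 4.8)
The plan is to verify that the locally defined operator-valued forms $\theta^{\maH}_\alpha+\pi^*\omega_\alpha$ obey the compatibility relation $P_\alpha=\phi_{\alpha\beta}(P_\beta)$ that characterizes elements of $\Omega^2(B,\Psi^1_\maL(M|B,\maE))$, and then to glue them.

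First I would note that $\theta^{\maH}_\alpha=(\nabla^{\maH}_\alpha)^2$ takes values in $\Psi^1(\pi^{-1}U_\alpha|U_\alpha,\maE_\alpha)$: in the formula $\theta^{\maH}_\alpha(X,Y)=\theta^{\maE}_\alpha(X^{\maH},Y^{\maH})+(\nabla^{\maE}_\alpha)_{T^{\maH}(X,Y)}$ the first term has order $0$, and since $T^{\maH}(X,Y)$ is a vertical vector field the second term is a fiberwise first order differential operator; as $\pi^*\omega_\alpha$ has order $0$ we get $\theta^{\maH}_\alpha+\pi^*\omega_\alpha\in\Omega^2(U_\alpha,\Psi^1(\pi^{-1}U_\alpha|U_\alpha,\maE_\alpha))$.

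The key step is to square the identity of Lemma~\ref{pi*}. Regarding both sides as connections on $\pi_*\maE_\alpha\otimes\maL_{\alpha\beta}$ and taking curvatures, and using that the curvature of a tensor product connection $\nabla\otimes\id+\id\otimes\nabla'$ is $\nabla^2\otimes\id+\id\otimes(\nabla')^2$, the right hand side yields $\theta^{\maH}_\alpha\otimes\id+\id\otimes\omega_{\alpha\beta}$, with $\omega_{\alpha\beta}=\nabla^2_{\alpha\beta}$ the scalar curvature of the line bundle connection, while the left hand side yields the pullback of $\theta^{\maH}_\beta$ along $\pi_*\varphi_{\alpha\beta}$. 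Passing through the canonical isomorphism $\End(\pi_*\maE_\alpha\otimes\maL_{\alpha\beta})\cong\End(\pi_*\maE_\alpha)$ and restricting to the subalgebra of fiberwise pseudodifferential operators, on which this pullback is exactly the map $\phi_{\alpha\beta}$ recalled just before Lemma~\ref{Comp.Local}, this becomes
\[
\phi_{\alpha\beta}(\theta^{\maH}_\beta)=\theta^{\maH}_\alpha+\omega_{\alpha\beta}
\]
as $\Psi^1(\pi^{-1}U_{\alpha\beta}|U_{\alpha\beta},\maE_\alpha)$-valued $2$-forms on $U_{\alpha\beta}$ (this is the analogue, for $\nabla^{\maH}$, of the relation $\varphi_{\alpha\beta}^*\theta_\beta=\theta_\alpha+\omega_{\alpha\beta}$ used in the proof of Proposition~\ref{global curvature}).

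Finally I would substitute the curving relation $\omega_{\alpha\beta}=\omega_\alpha-\omega_\beta$ (the Lemma preceding Lemma~\ref{defomega}) and use that multiplication by a scalar $2$-form pulled back from $B$ is central in $\Psi_\maL$ and is therefore fixed by the algebra isomorphism $\phi_{\alpha\beta}$, obtaining
\[
\phi_{\alpha\beta}(\theta^{\maH}_\beta+\pi^*\omega_\beta)=\theta^{\maH}_\alpha+\pi^*\omega_\alpha.
\]
By the definition of $\Psi_\maL(M|B,\maE)$, and hence of $\Omega^*(B,\Psi_\maL(M|B,\maE))$, this is precisely the gluing condition, so the collection $\{\theta^{\maH}_\alpha+\pi^*\omega_\alpha\}$ defines the desired global element $\theta^{\maH}\in\Omega^2(B,\Psi^1_\maL(M|B,\maE))$. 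I expect the only delicate point to be the bookkeeping in the squaring step --- the curvature of a tensor product connection, the identification of $\End$-valued forms after tensoring with a line bundle, and the compatibility of the correspondence $(\pi_*\varphi_{\alpha\beta})^*\leftrightarrow\phi_{\alpha\beta}$ with passage to curvature --- everything else being a routine unwinding of definitions; alternatively one can simply invoke the evident analogue of Proposition~\ref{global curvature} for the infinite dimensional twisted bundle $\pi_*\maE$ with its connection $\nabla^{\maH}$, taking Lemma~\ref{pi*} as the substitute for \eqref{connectionon}.
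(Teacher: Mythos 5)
Your proof is correct and follows essentially the same route as the paper: the paper's one-line argument is precisely your key step of squaring the identity in Lemma~\ref{pi*} to obtain $\phi_{\alpha\beta}(\theta^{\maH}_\beta)=\theta^{\maH}_\alpha+\pi^*(\omega_\alpha-\omega_\beta)$ and then gluing as in Proposition~\ref{global curvature}. You have merely spelled out the intermediate bookkeeping (the order-$\le 1$ count, the curvature of the tensor-product connection, and the $C^\infty(B)$-linearity of $\phi_{\alpha\beta}$) that the paper leaves implicit.
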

\begin{proof}
By Lemma \ref{pi*}, $\phi_{\alpha \beta}^* \theta^{\maH}_{\beta} = \theta^{\maH}_{\alpha} +\pi^*(\omega_{\alpha} -\omega_\beta)$, and the statement follows as in Proposition \ref{global curvature}.
\end{proof}
We have
$$
(\partial^{\maH})^2 (D) = [\theta^{\maH}, D]\text{ and }
\partial^{\maH}(\theta^{\maH})=2\pi i(\pi^* \varOmega).
$$
where $\varOmega$ is the $3$-curvature form of the connection on $\maL$.

Following Mathai and Stevenson \cite{MathaiStevenson} one can construct the  morphism of complexes
\[\Phi_{\nabla^{\maH}} \colon CC_{\bullet}^{-}(\Psi_\maL^{-\infty} (M|B; \maE\oplus \maE')) \to \left( \Omega^*(B)[u], d_{\varOmega} \right)_{\bullet}\]
 as follows.
(Note that in the nontwisted case similar morphism was constructed
in \cite{Gorokhovsky1, NestTsygan1, NestTsygan2}.)
Denote by
$$
\Delta^k := \{(t_0, \cdots , t_{k})\in \R^{k}\ | \  0\leq t_i,  \sum_{i=0}^k t_i=1\}.
$$
the standard $k$-simplex.

Define the maps $\Phi^k_{\nabla^\maH} \colon C_k\left({\Psi_\maL}^{-\infty} (M|B; \maE\oplus \maE')\right) \to \Omega^*(M)[u]$ by
$$
\Phi^k_{\nabla^\maH}(A_0, \cdots, A_k) := \int_{\Delta^k} \Tr\left( A_0 e^{-ut_0\frac{\theta^{\maH}}{2 \pi i}} \pa^\maH (A_1) e^{-ut_1\frac{\theta^{\maH}}{2 \pi i}} \cdots e^{-ut_{k-1}\frac{\theta^{\maH}}{2 \pi i}} \pa^\maH (A_k)e^{-ut_k\frac{\theta^{\maH}}{2 \pi i}}\right) dt_1\ldots dt_k,
$$
for  $A_0\otimes\ldots \otimes A_k \in C_k\left(\Psi_\maL^{-\infty} (M|B; \maE\oplus \maE')\right)$. Then
let $\Phi _{\nabla^\maH}= \sum_{k=0}^{\infty} \Phi^k_{\nabla^\maH}$.

\begin{theorem}\cite{MathaiStevenson}
 The map  $\Phi _{\nabla^\maH} \colon (CC^{-}_{\bullet}(\Psi_\maL^{-\infty} (M|B; \maE)), b+uB) \to (\Omega^*(B)[u], d_{\varOmega})_{\bullet}$
is a morphism of complexes.
\end{theorem}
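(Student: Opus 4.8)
The plan is to verify directly that $\Phi_{\nabla^{\maH}}$ commutes with the differentials, that is $\Phi_{\nabla^{\maH}}\circ(b+uB)=d_{\varOmega}\circ\Phi_{\nabla^{\maH}}$. Both sides being $\C[[u]]$-linear, it suffices to prove, for every chain $a=A_0\otimes\dots\otimes A_k\in C_k(\Psi^{-\infty}_{\maL}(M|B;\maE))$, the identity
$$
\Phi_{\nabla^{\maH}}^{k-1}(ba)+u\,\Phi_{\nabla^{\maH}}^{k+1}(Ba)=u\,d\,\Phi_{\nabla^{\maH}}^{k}(a)+u^{2}\varOmega\wedge\Phi_{\nabla^{\maH}}^{k}(a).
$$
First I would settle well-definedness. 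Since $u$ is a formal variable, $e^{-ut_i\theta^{\maH}/2\pi i}$ makes sense as an element of $\Omega^{\mathrm{even}}(B,\Psi_{\maL}(M|B;\maE))[u]$; the leading factor $A_0\in\Psi^{-\infty}_{\maL}(M|B;\maE)$ pushes every monomial of the integrand into $\Omega^*(B,\Psi^{-\infty}_{\maL}(M|B;\maE))[u]$, so the fibrewise trace applies; and counting form degrees (the $k$ factors $\partial^{\maH}A_i$ contribute total degree $k$, each power of $u$ being compensated by a $2$-form from an exponential) shows $\Phi^k_{\nabla^{\maH}}(a)\in(\Omega^*(B)[u])_k$ with $u$-degree at most $\tfrac12\dim B$, whence $\Phi_{\nabla^{\maH}}$ is finite on each chain and lands in the asserted complex.

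The computation rests on three identities, all consequences of material already established above. First, the fibrewise (super)trace is parallel for $\nabla^{\maH}$: $d\,\Tr\,\xi=\Tr\,\partial^{\maH}\xi$ for $\xi\in\Omega^*(B,\Psi^{-\infty}_{\maL}(M|B;\maE))$, which locally reads $d\,\Tr_{\alpha}\xi_\alpha=\Tr_{\alpha}[\nabla^{\maH}_{\alpha},\xi_\alpha]$, the standard covariance of the Bismut fibre trace \cite{bgv}. Second, $\partial^{\maH}$ is a graded derivation with $(\partial^{\maH})^{2}=[\theta^{\maH},\cdot\,]$ and with $\partial^{\maH}\theta^{\maH}=2\pi i\,\pi^{*}\varOmega$ \emph{central}; hence Duhamel's formula degenerates, $\partial^{\maH}e^{-ut\theta^{\maH}/2\pi i}=-ut\,\pi^{*}\varOmega\wedge e^{-ut\theta^{\maH}/2\pi i}$, and the central $3$-form $\pi^{*}\varOmega$ may be pulled out of any trace. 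Third, since $\theta^{\maH}$ commutes with its own exponential, $\theta^{\maH}e^{-ut\theta^{\maH}/2\pi i}=-\tfrac{2\pi i}{u}\,\partial_t e^{-ut\theta^{\maH}/2\pi i}$, so an interior insertion of $\theta^{\maH}$ turns into a $t$-derivative that is removed by Stokes' theorem on $\Delta^k$. Throughout I use that $\Tr$ is graded-cyclic and annihilates supercommutators.

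Now I would compute $u\,d\,\Phi^k_{\nabla^{\maH}}(a)$. Put $e_i=e^{-ut_i\theta^{\maH}/2\pi i}$ and $W=A_0e_0\,\partial^{\maH}A_1\,e_1\cdots\partial^{\maH}A_k\,e_k$, so by the first identity $u\,d\,\Phi^k_{\nabla^{\maH}}(a)=u\int_{\Delta^k}\Tr(\partial^{\maH}W)\,dt_1\cdots dt_k$ (up to the usual sign from commuting $d$ past the simplex integral), and the Leibniz rule splits $\partial^{\maH}W$ into three groups. In the first group $\partial^{\maH}$ meets some $e_i$, contributing by the second identity the factor $-ut_i\,\pi^{*}\varOmega\wedge$; pulling out the central form and summing over $i$ gives $-u\varOmega\wedge\big(\sum_{i=0}^{k}t_i\big)W=-u\varOmega\wedge W$, hence after integration $-u^{2}\varOmega\wedge\Phi^k_{\nabla^{\maH}}(a)$, which exactly cancels the curvature term on the right of the target identity. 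In the second group $\partial^{\maH}$ meets a factor $\partial^{\maH}A_i$, yielding $[\theta^{\maH},A_i]$; by the third identity each $\theta^{\maH}$ is slid onto a neighbouring exponential and $\int_{\Delta^k}$ of the resulting $t$-derivatives reduces, by Stokes, to integrals over the faces $\{t_i=0\}$ of $\Delta^k$, on each of which one exponential degenerates to the identity. In the third group $\partial^{\maH}$ meets $A_0$, leaving the bulk term $\int_{\Delta^k}\Tr\big((\partial^{\maH}A_0)e_0\,\partial^{\maH}A_1\cdots e_k\big)$. It then remains to see that the second and third groups together equal $\Phi^{k-1}_{\nabla^{\maH}}(ba)+u\,\Phi^{k+1}_{\nabla^{\maH}}(Ba)$: using $A_j\partial^{\maH}A_{j+1}=\partial^{\maH}(A_jA_{j+1})\mp(\partial^{\maH}A_j)A_{j+1}$, graded cyclicity of $\Tr$, and the collapse formula $\int_{\Delta^{k+1}}\Tr(e_{k+1}e_0\,X)=\int_{\Delta^k}s_0\,\Tr(e^{-us_0\theta^{\maH}/2\pi i}X)\,ds$ for $X$ independent of $t_0,t_{k+1}$ (obtained by integrating out $t_0$ along $t_0+t_{k+1}=s_0$), the face contributions on which two genuine factors $\partial^{\maH}A_j,\partial^{\maH}A_{j+1}$ become adjacent recombine, together with the bulk term, into $\Phi^{k-1}_{\nabla^{\maH}}(ba)$, while the remaining face contributions --- where a trivial slot is created and $\partial^{\maH}1=0$ is used --- reassemble into $u\,\Phi^{k+1}_{\nabla^{\maH}}(Ba)$.

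The one genuinely new ingredient, compared with the untwisted computation of \cite{Gorokhovsky1,NestTsygan1,NestTsygan2}, is the curvature $3$-form $\varOmega$, and this turns out to be the easy part: because $\partial^{\maH}\theta^{\maH}$ is central it enters only through the clean factor in the second identity and therefore contributes precisely the $u^{2}\varOmega\wedge$ term of $d_{\varOmega}$, nothing else. The real obstacle --- the step I expect to cost the most work and to be the most error-prone --- is the bookkeeping in the last part of the third paragraph: carrying the Koszul signs of the $\Z_2$-grading through the Leibniz expansion, and matching exactly which Stokes faces and cyclic rearrangements feed the Hochschild boundary $b$ as opposed to the Connes boundary $B$, including the telescoping cancellations. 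This combinatorial verification is the content of \cite{MathaiStevenson}, whose argument follows the untwisted templates cited above, and in a full write-up I would either reproduce it or appeal to it.
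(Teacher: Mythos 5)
The paper does not prove this theorem; it cites \cite{MathaiStevenson}, whose argument is precisely the JLO-type verification you outline, going back to the untwisted computations of \cite{Gorokhovsky1, NestTsygan1, NestTsygan2}. Your sketch correctly isolates the three pillars (covariance of $\Tr$ under $\partial^{\maH}$, the derivation property with $(\partial^{\maH})^2=[\theta^{\maH},\cdot]$ and central Bianchi $\partial^{\maH}\theta^{\maH}=2\pi i\,\pi^*\varOmega$, and the $t$-derivative trick plus Stokes on $\Delta^k$), and correctly identifies the centrality of $\partial^{\maH}\theta^{\maH}$ as the mechanism by which the extra $u^2\varOmega\wedge$ term of $d_{\varOmega}$ appears and why nothing else changes in the twisted setting. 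As you note, the remaining work is the sign and face bookkeeping, which is the content of the cited reference; the outline is consistent with it. One trivial notational caveat: in the paper's formula the trace is the ordinary fibrewise $\Tr$; the $\Z_2$-graded $\STr$ version appears only later in the proof of Theorem \ref{superconnection index}.
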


This morphism depends on the choice of horizontal distribution $\maH$. However the results of \cite{MathaiStevenson} show that a different choice of $\maH$ leads to a chain homotopic morphism.

Assume now that $D \in \Psi_{\maL}(M|B; \maE, \maE')$ is an twisted elliptic family.
In the Definition \ref{defindex} we defined $\ind(D) = \left[P_{D} - Q \right] \in K_0(\Psi_{\maL}^{-\infty}(M|B; \maE, \maE'))$. Here the idempotents $P_D$ and $Q$ belong to the algebra $\overline{\Psi_{\maL}^{-\infty}(M|B; \maE\oplus \maE'))}$ -- the unitalization of $\Psi_{\maL}^{-\infty}(M|B; \maE \oplus \maE')$ with the multiplier $Q \in \Psi_{\maL}^{0}(M|B; \maE \oplus \maE')$ adjoined. It follows that if we directly apply the formula \eqref{cyclicchern} to $\left[P_{D} - Q \right] $
we obtain a $0$-chain $\Ch \left[P_{D} - Q \right]$ in the relative cyclic complex $CC_{\bullet}^-\left( \overline{\Psi_{\maL}^{-\infty}(M|B; \maE\oplus \maE'))},  \Psi_{\maL}^{-\infty}(M|B; \maE\oplus \maE'))\right)$. We have  the natural morphism $\iota \colon CC_{\bullet}^-\left( \Psi_{\maL}^{-\infty}(M|B; \maE\oplus \maE'))\right) \to
CC_{\bullet}^-\left( \overline{\Psi_{\maL}^{-\infty}(M|B; \maE\oplus \maE'))},  \Psi_{\maL}^{-\infty}(M|B; \maE\oplus \maE'))\right)$, and the following equality of homology classes: $\iota (\ch (\ind D)) = \left[\Ch \left[P_{D} - Q \right] \right]$. It is straightforward to see that the map $\Phi _{\nabla^\maH}$ extends to a morphism
\[
\Phi _{\nabla^\maH} \colon CC_{\bullet}^-\left( \overline{\Psi_{\maL}^{-\infty}(M|B; \maE\oplus \maE'))},  \Psi_{\maL}^{-\infty}(M|B; \maE\oplus \maE'))\right) \to (\Omega^*(B)[u], d_{\varOmega})_{\bullet}
\]
defined by the same formula, and that we have an equality $\Phi _{\nabla^\maH} \circ \iota = \Phi _{\nabla^\maH}$. It follows that
\[
\Phi _{\nabla^\maH}(\ch (\ind D)) = \left[\Phi _{\nabla^\maH}(\Ch \left[P_{D} - Q \right] ) \right].
\]

\section{Dirac operators and superconnections}\label{Dirac}

\subsection{Dirac operators}\label{Dirac operators}
The goal of this section is to give a superconnection proof of the family index theorem for  projective families of  Dirac operators. We assume that the fibers of the smooth  fibration $\pi: M\to B$ are even dimensional compact Riemannian manifolds.

We begin with the definition of a horizontally twisted Clifford module.
 Denote by $C(M|B)$ the Clifford algebra of the fiberwise cotangent bundle $T^*(M|B)=T^*M/ (\ker \pi_*)^{\perp}$.
Let $\maL$ be a unitary gerbe on $B$.
\begin{definition}\
\begin{itemize}
 \item  A twisted Clifford module is a horizontally  $\maL$-twisted Hermitian $\Z_2$-graded vector bundle $\maE=\maE^+ \oplus \maE^-$ on $M$ together with the homomorphism $c \colon  C(M|B) \rightarrow \End (\maE)$ of bundles of unital $\Z_2$-graded $*$-algebras.
\item A Clifford  connection $\nabla^{\maE}$ on $\maE$ is an Hermitian  connection such that $\nabla^{\maE} (c(a))=c(\nabla^{M|B} (a))$.
\end{itemize}

\end{definition}
Clifford connections on Clifford modules always exist; the proof  is analogous to the
proof of existence of twisted connections in Lemma \ref{connection existence}.

Choose  horizontal distribution i.e. a subbundle $\maH \subset TM$ such that
$
TM = \maH \oplus T (M|B).
$
This choice together with the Riemannian metric on the fibers of $\pi$ allows one to define a connection $\nabla^{M|B}$ on the fiberwise tangent bundle $T(M|B)$, see \cite{bgv} Section 10.1.
We denote by $R^{M|B}$ the curvature of this connection.

 Set $\End_{C(M|B)}(\maE)= \{A \in \End(\maE)\ | \ [A, c(a)]=0 \text{ for every } a \in C(M|B) \}$.
Let $\Gamma \in C(M|B)$ be the chirality operator defined locally by $\Gamma = i^{k / 2} e^1\ldots e^k$ where $k=\dim M-\dim B$ and $e^1, \ldots, e^k$ is the local orthonormal basis of $T^*(M|B)$. Define then the relative supertrace
$$
\str_{\maE/\maS} \colon  \End_{C(M|B)}(\maE) \to C^{\infty} (M)\text{ by }\str_{\maE/\maS}(A)= 2^{-k/2} \str c(\Gamma) A.
$$
We fix from now on a Clifford connection $\nabla^{\maE}$ on $\maE$ and a descent datum $(U_{\alpha}, \maL_{\alpha \beta},
\mu_{\alpha \beta \gamma}, \maE_{\alpha}, \varphi_{\alpha \beta})$ for the horizontally $\maL$-twisted Clifford
module $\maE$. The connection $\nabla^{\maE}$ defines a connection $(\nabla^{\maE}_{\alpha}, \nabla_{\alpha \beta}, \omega_{\alpha})$ (defined up to equivalence) on this descent datum. Each $\maE_{\alpha}$ is then a Clifford module on the fibration $\pi^{-1}U_{\alpha} \to U_{\alpha}$, and
each connection $\nabla^{\maE}_{\alpha}$ is a Clifford connection.

Recall  (see Proposition \ref{global curvature}) that one defines $\theta^{\maE} \in \Omega^2(M, \End(\maE))$
by setting $\theta^{\maE}|_{\pi^{-1}U_{\alpha}}= \theta^{\maE}_{\alpha} +\pi^* \omega_{\alpha}$.
Denote by  $c(R^{M|B})$  the action of the $2$ -form with values in the
Clifford algebra obtained from $R^{M|B}$ via the Lie algebra isomorphism $\mathfrak{so}(T(M|B)) \to C^2(M|B)$.
Here $C^2(M|B) \subset C(M|B)$ is a subspace consisting of elements $\sum u_i v_i$, $u_i, v_i \in T^*(M|B)$ with $\sum \langle u_i, v_i \rangle=0$.
Define $\theta^{\maE/\maS}=\theta^{\maE}-c(R^{M|B})$.

The argument  in \cite{bgv}, Proposition 3.43,
shows that $\theta^{\maE/\maS} \in \Omega^2(M, \End_{C(M|B)}(\maE)) $. Over an open set $\pi^{-1}U_{\alpha}$ we have
$\theta^{\maE/\maS}|_{U_{\alpha}}=\theta^{\maE/\maS}_{\alpha} +\pi^*\omega_{\alpha}$ where $\theta^{\maE/\maS}_{\alpha} \in \End_{C(M|B)}(\maE_{\alpha})$ is defined via the equality $\theta^{\maE}_{\alpha}=\theta^{\maE/\maS}_{\alpha}+ c(R^{M|B})$. We can then define a differential form $\Ch_{\maL}(\maE/\maS)$ by
\begin{equation}\label{deftwistedchern}
\Ch_{\maL}(\maE/\maS) =\str_{\maE/\maS} e^{-\frac{u \theta^{\maE/\maS}}{2 \pi i}}  \in \Omega^*(M)[u].
\end{equation}
The  proof of the following result  is standard and analogous to the proof of  Proposition \ref{twchern}.
\begin{lemma}
We have $ d_{\pi^*\varOmega}\, \Ch_{\maL}(\maE/\maS) = 0$ and the corresponding class is $H^*_{\pi^*\varOmega}(M)$ is independent of the
choice of Clifford connection $\nabla^{\maE}$.
\end{lemma}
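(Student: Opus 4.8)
The plan is to mimic the proof of Proposition \ref{twchern} verbatim, with the relative supertrace $\str_{\maE/\maS}$ playing the role of $\tr$ and the sub-bundle $\End_{C(M|B)}(\maE)$ playing the role of $\End(\maE)$. First I would fix the descent datum $(U_\alpha, \maL_{\alpha\beta}, \mu_{\alpha\beta\gamma}, \maE_\alpha, \varphi_{\alpha\beta})$ together with the induced connection $(\nabla^{\maE}_\alpha, \nabla_{\alpha\beta}, \omega_\alpha)$, and introduce the derivation $\nabla$ of $\Omega^*(M, \End(\maE))$ given locally by $(\nabla\xi)|_{U_\alpha} = [\nabla^{\maE}_\alpha, \xi]$, exactly as in the paragraph preceding Proposition \ref{twchern}. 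For the horizontally twisted bundle $\maE$ this is well defined and satisfies $\nabla^2 = [\theta^{\maE}, \cdot]$ and $\nabla\theta^{\maE} = 2\pi i\,\pi^*\varOmega$ (the analogue of Proposition \ref{global curvature} in the horizontally twisted case). Because $\nabla^{\maE}$ is a Clifford connection, $\nabla^{\maE}(c(a)) = c(\nabla^{M|B}a)$, so $\nabla$ restricts to a derivation of $\Omega^*(M, \End_{C(M|B)}(\maE))$. On that sub-bundle $[c(R^{M|B}), \cdot] = 0$, since $c(R^{M|B})$ takes values in $c(C(M|B))$; hence $\nabla^2\xi = [\theta^{\maE}, \xi] = [\theta^{\maE/\maS}, \xi]$ there. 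Moreover, applying $\nabla$ to $c(R^{M|B})$ and using that $\nabla^{\maE}$ is Clifford together with the second Bianchi identity $\nabla^{M|B}R^{M|B}=0$ gives $\nabla(c(R^{M|B})) = 0$, so $\nabla\theta^{\maE/\maS} = \nabla\theta^{\maE} = 2\pi i\,\pi^*\varOmega$.

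Next I would record the two elementary properties of the relative supertrace used below: (i) $\str_{\maE/\maS}$ annihilates graded commutators of $\End_{C(M|B)}(\maE)$-valued forms; and (ii) $d\bigl(\str_{\maE/\maS}\xi\bigr) = \str_{\maE/\maS}(\nabla\xi)$ for $\xi \in \Omega^*(M, \End_{C(M|B)}(\maE))$. Both are local and reduce to the corresponding facts for the ordinary relative supertrace and a Clifford connection on each $\maE_\alpha$. For the first statement of the lemma, since $\pi^*\varOmega$ is a scalar (hence central) $3$-form and $\nabla\theta^{\maE/\maS}=2\pi i\,\pi^*\varOmega$ commutes with $\theta^{\maE/\maS}$, the Duhamel formula collapses to
\[
\nabla\!\left(e^{-\frac{u\theta^{\maE/\maS}}{2\pi i}}\right) = -u\,\pi^*\varOmega\wedge e^{-\frac{u\theta^{\maE/\maS}}{2\pi i}}.
\]
Applying $\str_{\maE/\maS}$ and using (ii) yields $d\,\Ch_{\maL}(\maE/\maS) = -u\,\pi^*\varOmega\wedge\Ch_{\maL}(\maE/\maS)$, which is exactly $d_{\pi^*\varOmega}\,\Ch_{\maL}(\maE/\maS) = ud\,\Ch_{\maL}(\maE/\maS)+u^2\pi^*\varOmega\wedge\Ch_{\maL}(\maE/\maS)=0$.

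For the independence statement I would repeat the transgression argument from Proposition \ref{twchern}. Given a second Clifford connection $\nabla'^{\maE}$, with induced gerbe connection of $3$-curvature $\varOmega'$ and $\eta\in\Omega^2(M)/d\Omega^1(M)$ satisfying $\varOmega'=\varOmega+d\eta$, pass to $\widetilde{M}=\R\times M$ and form the affine interpolation $\widetilde{\nabla}^{\maE}=(1-t)\pi^*\nabla^{\maE}+t\pi^*\nabla'^{\maE}$; a convex combination of Hermitian Clifford connections is again a Hermitian Clifford connection, and the fibrewise data $\nabla^{M|B}$, $R^{M|B}$ pull back unchanged, so $\theta^{\widetilde{\maE}/\maS}$ is defined and lies in $\Omega^2(\widetilde{M}, \End_{C(\widetilde{M}|\R\times B)}(\widetilde{\maE}))$. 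By the first part, $\str_{\widetilde{\maE}/\maS}e^{-u\theta^{\widetilde{\maE}/\maS}/2\pi i}$, and hence $\str_{\widetilde{\maE}/\maS}\bigl(e^{ut\eta}\wedge e^{-u\theta^{\widetilde{\maE}/\maS}/2\pi i}\bigr)$, is a cocycle in $(\Omega^*(\widetilde{M})[u], d_{\pi^*\varOmega})$; integrating $\iota_{\partial/\partial t}$ of this form over $t\in[0,1]$ produces the transgression chain showing that $\Ch_{\maL}(\nabla'^{\maE})$ and $e^{-u\eta}\wedge\Ch_{\maL}(\nabla^{\maE})$ are cohomologous, i.e. $I([\Ch_{\maL}(\nabla^{\maE})])=[\Ch_{\maL}(\nabla'^{\maE})]$ (and plain equality of classes in $H^*_{\pi^*\varOmega}(M)$ when the $3$-curvature is kept fixed).

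The only genuinely new points relative to Proposition \ref{twchern} are that $\nabla$ preserves $\End_{C(M|B)}(\maE)$ and that $\nabla\theta^{\maE/\maS}$ is still $2\pi i\,\pi^*\varOmega$ — that is, subtracting $c(R^{M|B})$ from $\theta^{\maE}$ does not spoil the Bianchi-type identity — together with the analogous checks for the interpolated connection on $\widetilde{M}$. These, along with the fact $\theta^{\maE/\maS}\in\Omega^2(M,\End_{C(M|B)}(\maE))$ (which is \cite{bgv}, Proposition 3.43, and is quoted just above the lemma), are where I expect the bulk of the verification to go; everything else is formally identical to the untwisted Chern–character computation.
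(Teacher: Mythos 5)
Your argument is correct and follows precisely the route the paper has in mind: the paper provides no written proof of this lemma, only the remark that it is ``standard and analogous to the proof of Proposition~\ref{twchern},'' and the Clifford-specific checks you supply --- that the derivation $\nabla$ preserves $\End_{C(M|B)}(\maE)$ because $\nabla^{\maE}$ is Clifford, that $\nabla(c(R^{M|B}))=0$ by the Bianchi identity so $\nabla\theta^{\maE/\maS}=2\pi i\,\pi^*\varOmega$, that $\str_{\maE/\maS}$ kills graded commutators and intertwines $d$ with $\nabla$ on that subbundle (using $\nabla^{M|B}\Gamma=0$), and that the affine interpolation of two Clifford connections on $\R\times M$ remains a Clifford connection --- are exactly the details that remark leaves to the reader. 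The rest of your transgression argument is verbatim the computation in Proposition~\ref{twchern}, as intended.
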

We also introduce the fiberwise $\widehat{A}$-genus by $\widehat{A}(TM|B)= \widehat{A}\left(\frac{u}{2 \pi i}R^{M|B}\right)\in \Omega^*(M)[u]$, where $ \widehat{A}(x)$ is the power series defined by
\[
\widehat{A}(x)=\left.\det\right.^{1/2}\left(\frac{x/2}{\sinh x/2}\right).
\]

Using the above data,  we can define on each  fibration $\pi^{-1}U_{\alpha} \to U_{\alpha}$ a family of Dirac operators $D_{\alpha}$ acting on the sections of the bundle $\maE_{\alpha}$. Locally $D_{\alpha} = \sum_i c(e^i) \left(\nabla^{\maE}_{\alpha}\right)_{e_i}$ where $\{e_i\}$, $\{e^i\}$ are dual bases of $T(M|B)$ and $T^*(M|B)$ respectively. {We leave it to the reader to check the following.

\begin{lemma} The collection $D=\{ D_{\alpha}\}$ defines an element in $\Psi_\maL^1(M|B; \maE)$.
\end{lemma}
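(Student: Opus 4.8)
The plan is to verify the two conditions entering the definition of $\Psi_\maL^1(M|B;\maE)$: that each $D_\alpha$ lies in $\Psi^1(\pi^{-1}U_\alpha|U_\alpha;\maE_\alpha)$, and that $D_\alpha=\phi_{\alpha\beta}(D_\beta)$ over every nonempty $U_{\alpha\beta}$. The first point is immediate: $D_\alpha=\sum_i c(e^i)(\nabla^\maE_\alpha)_{e_i}$ is, fiber by fiber, a first order differential operator on $\maE_\alpha$ with coefficients depending smoothly on the base point, and the standard argument shows that this expression does not depend on the chosen local orthonormal frame $\{e_i\}$; hence $D_\alpha\in\Psi^1(\pi^{-1}U_\alpha|U_\alpha;\maE_\alpha)$, its principal symbol being (up to a scalar) Clifford multiplication --- ellipticity is not needed for the present lemma. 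So the only real content is the cocycle relation, which we plan to deduce from the compatibility \eqref{connectionon} of the Clifford connection together with the globality of the Clifford action.

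Recall from Section \ref{prfa} that, on fiberwise pseudodifferential operators, $\phi_{\alpha\beta}$ is the restriction of the algebra isomorphism $(\pi_*\varphi_{\alpha\beta})^*\colon\End(\pi_*\maE_\beta)\to\End(\pi_*\maE_\alpha\otimes\maL_{\alpha\beta})\cong\End(\pi_*\maE_\alpha)$; concretely, $\phi_{\alpha\beta}$ is conjugation by $\varphi_{\alpha\beta}\colon\maE_\alpha\otimes\pi^*\maL_{\alpha\beta}\cong\maE_\beta$ followed by the canonical identification $\chi_{\maL_{\alpha\beta}}^{-1}$ discarding the pulled back line bundle factor. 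It therefore suffices to show that conjugating $D_\beta$ by $\varphi_{\alpha\beta}$ yields $\chi_{\maL_{\alpha\beta}}(D_\alpha)$, that is, the operator acting as $D_\alpha$ on the $\maE_\alpha$ factor and as the identity on $\pi^*\maL_{\alpha\beta}$. Two facts feed into this. First, since $c\colon C(M|B)\to\End(\maE)$ is a globally defined homomorphism of bundles of algebras and $C(M|B)$ acts trivially on the line bundle factor, one has $\varphi_{\alpha\beta}^*c_\beta=c_\alpha$; equivalently, $\varphi_{\alpha\beta}$ intertwines $c_\beta$ on $\maE_\beta$ with $c_\alpha\otimes\id$ on $\maE_\alpha\otimes\pi^*\maL_{\alpha\beta}$. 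Second, because $(\nabla^\maE_\alpha,\nabla_{\alpha\beta},\omega_\alpha)$ is a connection on the descent datum, applying \eqref{connectionon} to the pulled back descent datum $(\pi^{-1}U_\alpha,\pi^*\maL_{\alpha\beta},\dots,\maE_\alpha,\varphi_{\alpha\beta})$ (cf. Section \ref{hotwbu}) gives $\varphi_{\alpha\beta}^*\nabla^\maE_\beta=\nabla^\maE_\alpha\otimes\id+\id\otimes\pi^*\nabla_{\alpha\beta}$.

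Combining the two, the conjugate of $D_\beta$ is $\sum_i(c_\alpha(e^i)\otimes\id)\big((\nabla^\maE_\alpha)_{e_i}\otimes\id+\id\otimes(\pi^*\nabla_{\alpha\beta})_{e_i}\big)$, which on a local section $s\otimes\pi^*l$ --- with $s$ a section of $\maE_\alpha$ and $l$ a section of $\maL_{\alpha\beta}$ --- acts by
\[
\sum_i c_\alpha(e^i)(\nabla^\maE_\alpha)_{e_i}s\otimes\pi^*l\ +\ \sum_i c_\alpha(e^i)s\otimes(\pi^*\nabla_{\alpha\beta})_{e_i}\pi^*l .
\]
The key point is that the second sum vanishes: each $e_i$ is vertical, so $\pi_*e_i=0$ and $(\pi^*\nabla_{\alpha\beta})_{e_i}\pi^*l=\pi^*\big((\nabla_{\alpha\beta})_{\pi_*e_i}l\big)=0$; this is exactly where one uses that a Dirac operator differentiates only along the fibers while $\nabla_{\alpha\beta}$ is pulled back from the base. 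What remains is $D_\alpha s\otimes\pi^*l=\chi_{\maL_{\alpha\beta}}(D_\alpha)(s\otimes\pi^*l)$, and applying $\chi_{\maL_{\alpha\beta}}^{-1}$ gives $\phi_{\alpha\beta}(D_\beta)=D_\alpha$, as required. There is no analytic subtlety in this argument; the only thing that requires care is the bookkeeping of the canonical isomorphisms relating $\End(\maE_\alpha\otimes\pi^*\maL_{\alpha\beta})$, $\End(\maE_\alpha)$ and the associated operator algebras, but once these are set up as in Section \ref{prfa} the computation above is the whole proof.
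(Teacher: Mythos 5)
The paper itself leaves this lemma to the reader, so there is no proof to compare against. Your argument is correct and complete: you correctly reduce the claim to the cocycle identity $D_\alpha=\phi_{\alpha\beta}(D_\beta)$, identify the two inputs --- that $c$ being a globally defined section of $\End(\maE)$ means $\varphi_{\alpha\beta}^*c_\beta=c_\alpha\otimes\id$, and that \eqref{connectionon} for the pulled-back descent datum gives $\varphi_{\alpha\beta}^*\nabla^\maE_\beta=\nabla^\maE_\alpha\otimes\id+\id\otimes\pi^*\nabla_{\alpha\beta}$ --- and you correctly isolate the decisive observation that the Dirac operator only differentiates in fiber directions, so the $\id\otimes(\pi^*\nabla_{\alpha\beta})_{e_i}$ term annihilates pulled-back sections $\pi^*l$ of $\pi^*\maL_{\alpha\beta}$, leaving exactly $\chi_{\maL_{\alpha\beta}}(D_\alpha)$. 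Checking the identity on generators $s\otimes\pi^*l$ is sufficient since $\pi^*\maL_{\alpha\beta}$ is locally framed by such pullbacks. This is precisely the verification the authors intended.
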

With respect to the decomposition $\maE= \maE^+\oplus\maE^-$ we have the decomposition
$\Psi_\maL(M|B; \maE) = \Psi_\maL(M|B; \maE^+, \maE^+)\oplus\Psi_\maL(M|B; \maE^+, \maE^-)\oplus\Psi_\maL(M|B; \maE^-, \maE^+)\oplus\Psi_\maL(M|B; \maE^-, \maE^-)$. The Dirac operator then decomposes as $D= D^+\oplus D^-$ where
$D^+ \in \Psi_\maL^1(M|B; \maE^+, \maE^-)$, $D^- \in \Psi_\maL^1(M|B; \maE^-, \maE^+)$

Classical arguments show that $D^+$ is fiberwise elliptic and hence the analytical index $\ind (D^+)$ of $D^+$ is well defined in $K(\Psi_\maL^{-\infty}(M|B; \maE))$.

We then have the following result
\begin{theorem}\ The following formula holds in $H^{\bullet}_{\maL}(B)$:
\[\Phi_{\nabla^{\maH}}( \ch (\ind D^+))]
 =  u^{-\frac{\dim M -\dim B}{2}}\left[\int_{M/B} \widehat{A}\left(\frac{u}{2 \pi i}R^{M|B}\right) \wedge \Ch_{\maL}(\maE/\maS)\right].
 \]
\end{theorem}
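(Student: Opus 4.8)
The plan is to prove the formula by the Bismut superconnection method: build a twisted Bismut superconnection adapted to $D$, show that its Chern character form represents the left-hand side in $H^{\bullet}_{\maL}(B)$ for every value of the rescaling parameter, and then let the parameter tend to $0$ to recover the right-hand side. \emph{Step 1 (the twisted Bismut superconnection).} Fix the descent datum $(U_{\alpha}, \maL_{\alpha\beta}, \mu_{\alpha\beta\gamma}, \maE_{\alpha}, \varphi_{\alpha\beta})$ with its connection $(\nabla^{\maE}_{\alpha}, \nabla_{\alpha\beta}, \omega_{\alpha})$, a horizontal distribution $\maH$, and the fiberwise metrics. On each $U_{\alpha}$ let $\mathbb{B}_{\alpha,t} = t^{1/2}D_{\alpha} + \nabla^{\maH}_{\alpha} - \tfrac14 t^{-1/2}c(T^{\maH})$ be the rescaled Bismut superconnection of the family $D_{\alpha}$ (\cite{Bismut}, \cite{bgv} Ch.~10), built from $\nabla^{\maH}_{\alpha}$ and the curvature $T^{\maH}$ of $\maH$. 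Since $\nabla^{\maE}$ is a Clifford connection, Lemma \ref{pi*} shows that the isomorphisms $\pi_*\varphi_{\alpha\beta}$ intertwine $\mathbb{B}_{\alpha,t}$ and $\mathbb{B}_{\beta,t}$ modulo $\nabla_{\alpha\beta}$, so $(\mathbb{B}_{\alpha,t}, \nabla_{\alpha\beta}, \omega_{\alpha})$ is a superconnection on the infinite-dimensional horizontally $\maL$-twisted bundle $\pi_*\maE$ in the sense of the definition preceding Proposition \ref{chernsuperconnection}; its curvature glues to a form $\theta^{\mathbb{B}_t}$ with $\theta^{\mathbb{B}_t}|_{U_{\alpha}} = \mathbb{B}_{\alpha,t}^2 + \pi^*\omega_{\alpha}$, and because $e^{-\mathbb{B}_{\alpha,t}^2}$ has smooth fiberwise kernel the ($u$-rescaled, as in Proposition \ref{chernsuperconnection}) Chern form $\Ch_{\maL}(\mathbb{B}_t) = \STr\, e^{-\frac{u\theta^{\mathbb{B}_t}}{2\pi i}} \in \Omega^*(B)[u]$ is well defined. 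The trace property $\STr[A,\cdot]=0$ on $\Psi_{\maL}^{-\infty}$ together with $\partial^{\maH}\theta^{\maH} = 2\pi i\,\pi^*\varOmega$ gives, exactly as in Proposition \ref{chernsuperconnection}, $d_{\varOmega}\Ch_{\maL}(\mathbb{B}_t)=0$; and the usual transgression identity $\frac{d}{dt}\Ch_{\maL}(\mathbb{B}_t) = d_{\varOmega}(\cdots)$ shows that $[\Ch_{\maL}(\mathbb{B}_t)] \in H^{\bullet}_{\maL}(B)$ does not depend on $t$.

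\emph{Step 2 (identification with the Chern character of the index).} Next I would show $[\Ch_{\maL}(\mathbb{B}_t)] = [\Phi_{\nabla^{\maH}}(\ch(\ind D^+))]$ for all $t$, which is Theorem \ref{superconnection index}. The argument is the twisted counterpart of the nontwisted identification \cite{Gorokhovsky1, NestTsygan1, NestTsygan2}: starting from the description $\ind(D^+) = [P_D - Q]$ of Definition \ref{defindex} (with $F$ a smoothing perturbation of $D^+$ and $R$ a parametrix), one evaluates $\Phi_{\nabla^{\maH}}$ on the cyclic Chern character $\Ch[P_D-Q]$ by a Duhamel expansion and recognizes the output — after passing to the entire cyclic complex, which is why that complex was introduced in the preliminaries — as $\Ch_{\maL}$ of a superconnection cohomologous to $\mathbb{B}_t$. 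Because $\Phi_{\nabla^{\maH}}$ is a morphism of complexes, only the resulting class matters, which yields the equality in $H^{\bullet}_{\maL}(B)$. (If the fiberwise kernel of $D$ happens to have locally constant rank, one may instead take $t\to\infty$, which localizes $\Ch_{\maL}(\mathbb{B}_t)$ on $\ker D$; the cyclic route above avoids that hypothesis.)

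\emph{Step 3 (small-time limit and conclusion).} Finally I would compute $\lim_{t\to 0}\Ch_{\maL}(\mathbb{B}_t)$; this is Theorem \ref{local index}. On each $U_{\alpha}$ this is exactly Bismut's family local index theorem for the Clifford module $\maE_{\alpha}$ (\cite{Bismut, BismutFreed}, \cite{bgv} Ch.~10): Getzler rescaling of the fiberwise heat kernel of $\mathbb{B}_{\alpha,t}$, together with Mehler's formula, gives
\[
\lim_{t\to 0}\STr_{\alpha}\, e^{-\frac{u(\mathbb{B}_{\alpha,t}^2+\pi^*\omega_{\alpha})}{2\pi i}}
= u^{-k/2}\int_{M|B}\widehat{A}\!\left(\tfrac{u}{2\pi i}R^{M|B}\right)\wedge\str_{\maE/\maS}\, e^{-\frac{u(\theta^{\maE/\maS}_{\alpha}+\pi^*\omega_{\alpha})}{2\pi i}},
\]
the central curving term $\pi^*\omega_{\alpha}$ simply riding along through the computation. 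By Proposition \ref{global curvature} and the definition \eqref{deftwistedchern}, the right-hand sides glue over $U_{\alpha}\cap U_{\beta}$ to the global form $u^{-k/2}\int_{M|B}\widehat{A}\!\left(\tfrac{u}{2\pi i}R^{M|B}\right)\wedge\Ch_{\maL}(\maE/\maS)$, which is therefore $\lim_{t\to 0}\Ch_{\maL}(\mathbb{B}_t)$ in $\Omega^*(B)[u]$. Combining the three steps: the left-hand side of the theorem equals $[\Ch_{\maL}(\mathbb{B}_t)]$ for every $t$ by Step 2, hence equals $\lim_{t\to 0}[\Ch_{\maL}(\mathbb{B}_t)]$ by the $t$-independence of Step 1, and this limit is the right-hand side by Step 3.

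\emph{Main obstacle.} The genuinely delicate point is Step 2: the fiberwise kernel of $D$ need not be a vector bundle, so the naive large-time argument is unavailable, and one must instead exhibit an explicit chain homotopy, in the entire negative cyclic complex, between the cyclic Chern character of the index idempotent and the superconnection Chern form. Carrying this out while keeping track of the gerbe twisting throughout — in particular using the compatibility identities for $\partial^{\maH}$ and $\theta^{\maH}$ so that all local expressions descend — is the bulk of the work. Step 3, by contrast, is Bismut's theorem applied chart by chart, the only genuinely new ingredient being that the curving forms $\omega_{\alpha}$ are central and assemble, via Proposition \ref{global curvature}, into the twisted relative Chern character form $\Ch_{\maL}(\maE/\maS)$.
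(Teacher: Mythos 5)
Your three-step plan is exactly the paper's strategy: fix the twisted Bismut superconnection $\A$, show that its twisted Chern form $\STr\, e^{-\frac{u\theta^{\A_{u^{-1}}}}{2\pi i}}$ represents $\Phi_{\nabla^{\maH}}(\ch(\ind D^+))$ (Theorem \ref{superconnection index}), and then compute the small-time limit via the local index theorem glued with the curvings (Theorem \ref{local index}). Steps~1 and~3 as you describe them are essentially what the paper does; in Step~3 the paper uses the explicit Bismut--Freed transgression \eqref{BF} rather than an abstract appeal to continuity of the limit, but the difference is cosmetic.

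Where your sketch diverges in substance is Step~2, and in a way that matters. You propose to ``evaluate $\Phi_{\nabla^{\maH}}$ on $\Ch[P_D-Q]$ by a Duhamel expansion and recognize the output as $\Ch_{\maL}$ of a superconnection cohomologous to $\mathbb{B}_t$.'' That is not what happens, and it is not clear it can be made to work: $P_D-Q$ is a smoothing idempotent built from a parametrix of $F=D(1+D^2)^{-1/2}$, and there is no direct ``recognition'' of $\Phi_{\nabla^\maH}(\Ch(P_D-Q))$ as a superconnection Chern form. The paper's proof of Theorem \ref{superconnection index} instead has three distinct ingredients you do not name. (i) One passes to the doubled bundle $\widetilde{\maE}=\maE\oplus\maE$ and uses the chain homotopy between the two JLO-type morphisms $\Phi_{\widetilde{\nabla}^{\maH}}$ and $\Phi_{\widetilde{\A}}$ on $CC^{entire}_\bullet(\Psi^{-\infty}_\maL(M|B;\widetilde{\maE}))$, with the explicit homotopy coming from Proposition~5.6 of \cite{MathaiStevenson} applied to the segment $\A_{u^{-1}}(s)=s\A_{u^{-1}}+(1-s)\nabla^{\maH}$. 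This is the only place a Duhamel-type expansion appears, and it relates the two \emph{morphisms}, not the two chains. (ii) One then extends $\Phi_{\widetilde{\A}}$ from $\Psi^{-\infty}_\maL$ to the larger algebra $\maF=\{F\in\Psi^0_\maL(M|B;\widetilde\maE)^{even}\ |\ [D,F]\in\Psi^0_\maL\}$, which contains $U_D$ and $P_D$; this step (following \cite{GS,BenameurCarey,Gorokhovsky}) is essential because $P_D$ and $U_D$ are not smoothing. (iii) Finally one invokes the conjugation-invariance of entire cyclic homology to replace $\Ch(P_D)=\Ch\bigl(U_D^{-1}\begin{bmatrix}1&0\\0&0\end{bmatrix}U_D\bigr)$ by $\Ch\bigl(\begin{bmatrix}1&0\\0&0\end{bmatrix}\bigr)$ inside $CC^{entire}_\bullet(\maF)$; only then does the Duhamel series collapse to $\STr\,e^{-\frac{u\theta^{\A_{u^{-1}}}}{2\pi i}}$. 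Without (ii) and (iii) --- the algebra enlargement and the inner-automorphism argument --- the link between the index idempotent and the superconnection heat form is missing; your ``main obstacle'' paragraph correctly locates the difficulty but does not supply the mechanism that resolves it.
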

This index theorem is established in \cite{MMS1, MMS2} for general projective families however with the more restrictive
conditions on the class $[\maL]$ of the gerbe $\maL$.
The superconnection proof of this result occupies the rest of the paper.

\subsection{Superconnections and index}

We continue in the notations of the previous section.

A twisted superconnection $\A$ on the descent datum $(U_{\alpha}, \maL_{\alpha \beta}, \mu_{\alpha \beta \gamma}, \maE_{\alpha}, \varphi_{\alpha \beta})$  is a collection
$(\A_\alpha)_{\alpha\in \Lambda}$ of superconnections on the vector bundles $\pi_*\maE_\alpha$ over the open sets $U_\alpha$  such that when $U_{\alpha\beta} \not = \emptyset$,
\begin{equation}\label{suco}
(\pi_*\varphi_{\alpha\beta})^*   \A_\beta   = \A_\alpha \otimes \id + \id\otimes \nabla_{\alpha\beta}.
\end{equation}

We say that $\A$ is a Bismut superconnection if each $\A_{\alpha}$ is. Specifically we have
\[
\A_{\alpha}= D_{\alpha}+\nabla_{\alpha}^{\maH} -\frac{1}{4}c(T^{\maH})
\]
where $T^{\maH}$ is defined in \eqref{th}.

  The fact that $\nabla^{\maH}$ is a connection
on the descent datum, and $D\in \Psi^1(M|B, \maE)$, $c(T^{\maH}) \in \Omega^2(B, \Psi^0(M|B, \maE))$
implies that the conditions \eqref{suco} are satisfied.
From now on $\A$ will denote the  Bismut superconnection.

We will also consider the  rescaled Bismut superconnection  $\A_s$
defined by
\[
(\A_s)_{\alpha}= \A_{\alpha, s}: = s^{1/2}D_{\alpha}+\nabla_{\alpha}^{\maH} -\frac{1}{4}s^{-1/2}c(T^{\maH})
\]
where $s$ is either  a positive number or a multiple of the formal variable $u$. Denote by  $\theta^{\A_s}_{\alpha}$ the curvature of the rescaled superconnection
 $(\A_s)_{\alpha}^2$.
In particular we have forms $u\theta^{\A_{\alpha, u^{-1}}} \in \Omega^*(U_{\alpha}, \Psi(\pi^{-1}U_{\alpha}|U_{\alpha}, \maE_{\alpha}))[u^{1/2}]$.

\begin{proposition} There exists a form $u\theta^{\A_{u^{-1}}} \in \Omega^*(B, \Psi(M|B; \maE))[u^{1/2}]$ such that
\[\left.u\theta^{\A_{u^{-1}}}\right|_{U_{\alpha}}= u(\theta^{\A_{\alpha, u^{-1}}} + \pi^* \omega_{\alpha}).
\]
\end{proposition}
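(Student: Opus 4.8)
The plan is to follow the pattern of Proposition \ref{global curvature} (and of the lemma producing $\theta^{\maH}$): I will show that the locally defined curvatures $\theta^{\A_{\alpha,u^{-1}}}$, corrected by the curvings, satisfy the gluing condition under the transition isomorphisms $\phi_{\alpha\beta}$, and hence assemble into a single global form. The starting point is the transformation law of the rescaled Bismut superconnection itself. Because $D=\{D_{\alpha}\}$ and $c(T^{\maH})$ are global elements of $\Psi_{\maL}(M|B;\maE)$, i.e. $\phi_{\alpha\beta}(D_{\beta})=D_{\alpha}$ and $\phi_{\alpha\beta}(c(T^{\maH})_{\beta})=c(T^{\maH})_{\alpha}$ over $U_{\alpha\beta}$, while $(\pi_*\varphi_{\alpha\beta})^*\nabla^{\maH}_{\beta}=\nabla^{\maH}_{\alpha}\otimes\id+\id\otimes\nabla_{\alpha\beta}$ by Lemma \ref{pi*}, the rescaled superconnection $\A_{\alpha,s}=s^{1/2}D_{\alpha}+\nabla^{\maH}_{\alpha}-\frac{1}{4}s^{-1/2}c(T^{\maH})$ satisfies, for $U_{\alpha\beta}\neq\emptyset$,
\[
(\pi_*\varphi_{\alpha\beta})^*\A_{\beta,s}=\A_{\alpha,s}\otimes\id+\id\otimes\nabla_{\alpha\beta},
\]
i.e. exactly the relation \eqref{suco}, read as a formal identity in $u^{1/2}$ once $s=u^{-1}$.

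Squaring this identity is the key step. The two summands $\A_{\alpha,s}\otimes\id$ and $\id\otimes\nabla_{\alpha\beta}$ act on different tensor factors of $\pi_*\maE_{\alpha}\otimes\maL_{\alpha\beta}$ and both are of odd total degree, so the standard computation for squares of tensor-product superconnections shows that their mixed terms cancel; and since $\maL_{\alpha\beta}$ is a line bundle there is a canonical identification $\End(\pi_*\maE_{\alpha}\otimes\maL_{\alpha\beta})\cong\End(\pi_*\maE_{\alpha})$ under which $\id\otimes\nabla_{\alpha\beta}^{2}$ becomes the scalar $2$-form $\omega_{\alpha\beta}$. Hence $\phi_{\alpha\beta}(\theta^{\A_s}_{\beta})=\theta^{\A_s}_{\alpha}+\omega_{\alpha\beta}$ over $U_{\alpha\beta}$. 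Using $\omega_{\alpha\beta}=\omega_{\alpha}-\omega_{\beta}$ from the connection data on $\maL$, this rearranges to
\[
\phi_{\alpha\beta}\!\left(\theta^{\A_s}_{\beta}+\pi^*\omega_{\beta}\right)=\theta^{\A_s}_{\alpha}+\pi^*\omega_{\alpha},
\]
which is exactly the gluing condition for a global section of $\Omega^*(B,\Psi_{\maL}(M|B;\maE))$ with its natural $\Z_2$-grading.

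Finally, specializing to $s=u^{-1}$ and multiplying by $u$ clears the only negative power of $u$ present (the term $u^{-1}D_{\alpha}^{2}$), so that $u\,\theta^{\A_{\alpha,u^{-1}}}\in\Omega^*(U_{\alpha},\Psi(\pi^{-1}U_{\alpha}|U_{\alpha};\maE_{\alpha}))[u^{1/2}]$, as already observed before the statement; the collection $\{u(\theta^{\A_{\alpha,u^{-1}}}+\pi^*\omega_{\alpha})\}_{\alpha}$ then satisfies the gluing condition and defines the required global form $u\theta^{\A_{u^{-1}}}\in\Omega^*(B,\Psi_{\maL}(M|B;\maE))[u^{1/2}]$, exactly as in Proposition \ref{global curvature}. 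I expect the only genuinely delicate point to be the sign bookkeeping in the cancellation of the mixed terms upon squaring; the claim that $D$ and $c(T^{\maH})$ are global is contained in the preceding discussion, and the remainder is the by-now-familiar \v{C}ech gluing argument.
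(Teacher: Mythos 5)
Your proof is correct and follows the same approach as the paper: square the compatibility relation \eqref{suco}, use that $\nabla_{\alpha\beta}^2=\omega_{\alpha\beta}=\omega_\alpha-\omega_\beta$, and conclude that the corrected local curvatures glue. The paper's own proof is simply the one-line version of the computation you spell out; the additional detail you supply (the vanishing of the cross terms in the square of the tensor-product superconnection, and the identification $\End(\pi_*\maE_\alpha\otimes\maL_{\alpha\beta})\cong\End(\pi_*\maE_\alpha)$) is accurate and is exactly what justifies that line.
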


\begin{proof}
Recall that the curvature of $\nabla_{\alpha\beta}$ is equal to
 $\omega_\alpha- \omega_\beta$. Therefore from the equation \eqref{suco} we obtain
$$
\phi_{\alpha\beta} \left(u\theta^{\A_{\beta, u^{-1}}}\right)= u(\theta^{\A_{\alpha, u^{-1}}} + \pi^*\omega_\alpha - \pi^*\omega_\beta).
$$
The statement of the Proposition follows.
\end{proof}

Notice that $u\theta^{\A_{u^{-1}}} = D^2 +$ forms of degree $>0$. Therefore we can
define $e^{-\frac{u \theta^{\A_{u^{-1}}}}{2 \pi i}} \in \Omega^*(B, \Psi^{-\infty}(M|B; \maE))[u^{1/2}]$ by the usual Duhamel's formula.

 Note that the parity considerations as in finite dimensional case show that the coefficients for the nonintegral
powers of $u$ are odd with respect to the grading and hence have a vanishing supertrace.

Given a superconnection $\A= (\A_\alpha)_{\alpha\in \Lambda}$ on the $\Z_2$-graded  horizontally $\maL$-twisted Clifford module $\maE$ by the infinite-dimensional version of the  Proposition \ref{chernsuperconnection} the
the differential form
$$
\STr (e^{-\frac{u \theta^{\A_{u^{-1}}}}{2 \pi i}}) \in \Omega^*(B)[u],
$$
is closed with respect to the twisted de Rham differential $d_{\varOmega}$. The proof is
identical to the proof of the first part of the Proposition \ref{twchern}. The proof of the following Theorem is adapted from \cite{Gorokhovsky} and it uses ideas from
\cite{GorokhovskyLott}. The use of cyclic theory is inspired by \cite{Nistor}.

\begin{theorem} \label{superconnection index}
The following equality holds in the $H^{\bullet}_{\maL}(B)$:
\[
\Phi_{\nabla^{\maH}}(\ch(\ind D^+))=\left[\STr (e^{-\frac{u \theta^{\A_{u^{-1}}}}{2 \pi i}})\right]
\]
\end{theorem}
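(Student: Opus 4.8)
The plan is to interpolate between the Chern character of the index, computed via the Mathai–Stevenson map $\Phi_{\nabla^{\maH}}$ applied to the idempotent description of $\ind D^+$, and the superconnection Chern form $\STr(e^{-u\theta^{\A_{u^{-1}}}/2\pi i})$. The key technical device is to work in the entire cyclic complex of the unitalized ideal $\overline{\Psi_\maL^{-\infty}(M|B;\maE\oplus\maE')}$ and to produce an explicit entire cyclic cycle built from the rescaled Bismut superconnection whose image under $\Phi_{\nabla^{\maH}}$ is the superconnection Chern form, and which is cohomologous in the relative entire cyclic complex to $\Ch[P_D - Q]$.

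First I would set up, following \cite{Gorokhovsky} and \cite{GorokhovskyLott}, the JLO-type cocycle associated to the rescaled Bismut superconnection $\A_s$: for each $s>0$ one forms a chain $\mathrm{ch}(\A_s) \in CC^{entire}_{\bullet}$ of the algebra of operators, using the heat-type operator $e^{-u\theta^{\A_{s,u^{-1}}}/2\pi i}$ and Duhamel expansions over simplices $\Delta^k$; the point is that $\Phi_{\nabla^{\maH}}$ is itself defined by exactly such simplex integrals against $e^{-ut_i\theta^{\maH}/2\pi i}$, so that the composition $\Phi_{\nabla^{\maH}}\circ(\text{inclusion of operator chains})$ applied to this JLO cycle reproduces $\STr(e^{-u\theta^{\A_{u^{-1}}}/2\pi i})$ after collapsing the heat operator of the full superconnection. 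Second, I would take the limit $s\to\infty$. By the standard Bismut–Freed argument (here used only qualitatively, in the operator algebra rather than for the explicit local index density), as $s\to\infty$ the family $D_\alpha$ retracts onto its kernel, and $\mathrm{ch}(\A_s)$ converges in the entire cyclic complex to the Chern character of the index bundle, i.e. to a cycle in the relative complex representing $\iota(\ch(\ind D^+)) = [\Ch[P_D - Q]]$. The homotopy invariance of cyclic homology under the entire-chain homotopy given by $\int_1^\infty \frac{ds}{s}\,\iota_{\partial/\partial s}\,\mathrm{ch}(\A_s)$ (a transgression formula, exactly as in \cite{Gorokhovsky}) shows $\mathrm{ch}(\A_1)$ and $\lim_{s\to\infty}\mathrm{ch}(\A_s)$ are homologous. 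Third, I would push everything forward by $\Phi_{\nabla^{\maH}}$, which is a chain map into $(\Omega^*(B)[u], d_{\varOmega})$ by the theorem of Mathai–Stevenson, and invoke $\Phi_{\nabla^{\maH}}\circ\iota = \Phi_{\nabla^{\maH}}$ together with the identity $\Phi_{\nabla^{\maH}}(\ch(\ind D^+)) = [\Phi_{\nabla^{\maH}}(\Ch[P_D-Q])]$ already recorded in the excerpt. Evaluating $\Phi_{\nabla^{\maH}}$ on the superconnection cycle $\mathrm{ch}(\A_s)$ at $s=1$ and checking that the simplex integrals telescope (using $\partial^{\maH}$-derivation identities and the fact that $\theta^{\maH}$ differs from the superconnection curvature only by the Clifford and index-bundle pieces) yields precisely $\STr(e^{-u\theta^{\A_{u^{-1}}}/2\pi i})$, completing the equality in $H^{\bullet}_{\maL}(B)$.

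The main obstacle, I expect, is the bookkeeping needed to make the JLO/superconnection chain genuinely entire in the sense of \cite{ConnesBook}: one must verify the factorial growth bound $\|\alpha_k\|\le C^k k!$ for the simplex-integrated chains, uniformly enough in $s$ to justify passing to the limit and to run the transgression homotopy, and this requires care because the operators $\partial^{\maH}(A_i)$ and the heat operators live in a filtered algebra of fiberwise pseudodifferential operators rather than a Banach algebra. A secondary subtlety is the identification of $\lim_{s\to\infty}\mathrm{ch}(\A_s)$ with the finite-dimensional Chern character of $\ker D^+$ as a $\Z_2$-graded twisted bundle over $B$: one needs the spectral gap / constant-rank hypothesis to be finessed in the usual way (by a further homotopy to reduce to the invertible-$D^-$ situation, or by absorbing the low-lying spectrum into the smoothing ideal), after which the comparison with $\Ch[P_D-Q]$ is the standard algebraic computation relating the JLO cocycle of a finite-dimensional superconnection to the idempotent Chern character formula \eqref{cyclicchern}.
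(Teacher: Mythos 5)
Your plan follows the classical Bismut roadmap: form a JLO-type chain from the rescaled superconnection, send the rescaling parameter to infinity to land on the Chern character of the index bundle, and transgress. The paper's proof takes a genuinely different and, in this setting, cleaner route that avoids the large-time limit altogether. After doubling to $\widetilde{\maE}=\maE\oplus\maE$, the paper (i) builds a second chain map $\Phi_{\widetilde{\A}}$ by replacing $\widetilde{\theta}^{\maH}$ with the superconnection curvature $\widetilde{\theta}^{\A_{u^{-1}}}$ and $\widetilde{\partial}^{\maH}$ with $[\widetilde{\A}_{u^{-1}},\cdot]$; (ii) exhibits an explicit chain homotopy between $\Phi_{\widetilde{\nabla}^{\maH}}$ and $\Phi_{\widetilde{\A}}$ by interpolating the superconnection itself, $\A_{u^{-1}}(s)=s\,\A_{u^{-1}}+(1-s)\nabla^{\maH}$ for $s\in[0,1]$ (a convex interpolation, \emph{not} the $t\to 0$ or $t\to\infty$ rescaling); and (iii) instead of passing to the kernel bundle, observes that $P_D=U_D^{-1}\begin{bmatrix}1&0\\0&0\end{bmatrix}U_D$ with $U_D,P_D$ lying in a suitable algebra $\maF$ on which $\Phi_{\widetilde{\A}}$ extends, so that $\Ch(P_D)$ and $\Ch\begin{bmatrix}1&0\\0&0\end{bmatrix}$ are homologous in $CC^{entire}_{\bullet}(\maF)$ simply because inner automorphisms act trivially on entire cyclic homology. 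Evaluating $\Phi_{\widetilde{\A}}$ on the constant idempotents then directly produces $\STr(e^{-u\theta^{\A_{u^{-1}}}/2\pi i})$.

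The concrete gap in your version is the step ``$\lim_{s\to\infty}\mathrm{ch}(\A_s)$ converges to the Chern character of the index bundle.'' You flag this yourself, but ``finessed in the usual way'' understates the problem. First, the object $\mathrm{ch}(\A_s)$ you invoke is not a cyclic chain in any standard sense; the superconnection naturally gives a cochain (a functional), or in the framework here a \emph{chain map} $\Phi_{\widetilde{\A}_s}$ — not an element of $CC^{entire}_{\bullet}$ — so the claim that it is ``cohomologous in the relative entire cyclic complex to $\Ch[P_D-Q]$'' needs a precise reformulation before one can even attempt a large-$s$ limit. Second, even after reformulating, the $s\to\infty$ limit requires either a spectral gap or a compact perturbation deforming $D$ to constant-rank kernel; in the twisted, horizontally $\maL$-twisted, non-Banach pseudodifferential setting, neither the existence of such a deformation within $\Psi_\maL(M|B;\maE)$ nor the convergence of the transgression integral $\int_1^\infty$ in the entire cyclic complex has been established, and both would require new arguments. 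The paper's conjugation-by-$U_D$ trick is precisely the device that makes the $s\to\infty$ analysis unnecessary: one only ever works with $s\in[0,1]$ (where the superconnection degenerates to a bona fide connection, not a family of operators with large-time kernel behavior), and all the hard analysis is pushed into the extension of $\Phi_{\widetilde{\A}}$ to $\maF$ via the Getzler--Szenes/Benameur--Carey estimates.
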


\begin{proof}

Consider the twisted bundle $\widetilde{\maE}=\maE\oplus \maE$ with the grading given by $\Gamma=\begin{bmatrix} \gamma &0\\ 0 &-\gamma \end{bmatrix}$. The algebra  $\Psi_\maL (M|B; \widetilde \maE)$ of operators on $\widetilde{\maE}$
is naturally $\Z_2$ graded. When discussing cyclic complexes of this algebra and its subalgebras we always consider it as
$\Z_2$-graded algebra. For an operator or (super) connection $K$ on $\maE$ set $\widetilde{K} = K \oplus K$; so for example $\widetilde{\A} =\A\oplus\A$, $\widetilde{\nabla}^{\maH}=\nabla^{\maH}\oplus \nabla^{\maH}$, etc.

Let $F=D(1+D^2)^{-1/2}$. Then it is immediate that $F\in \Psi^0_\maL (M|B; \maE)$ is odd with respect to  the grading $\Gamma$  and fiberwise elliptic.
Construct the invertible operator $U_D \in \Psi_\maL^{-\infty} (M|B ; \widetilde{\maE})$ by the same formula as before. Namely, 
 choose a parametrix
$R$ for $F$. Let $S_0=1-RF$, $S_1=1-FR$. Then set $U_{D}=\begin{bmatrix} S_0 &-(1+S_0)R\\F &S_1 \end{bmatrix}$. With such a choice the inverse is given by an explicit formula  $U_{D}^{-1}=\begin{bmatrix} S_0 &(1+S_0)R\\-F &S_1 \end{bmatrix}$. Set $P_D=U_D^{-1}\begin{bmatrix} 1_{\maE} &0\\0 &0 \end{bmatrix}U_D$.

The choices in the constructions can be made so that  we have $(P_D)^\pm=P_{D^\pm}$, see  Definition~\ref{defindex}.

Define the map $\Phi_{\widetilde{\nabla}^{\maH}} \colon CC_{\bullet}^- \left(\Psi_\maL^{-\infty} (M|B; \widetilde{\maE})\right) \to \left(\Omega^*(M)[u], d_{\varOmega}\right)$ by
$$
\Phi^k_{\widetilde{\nabla}^\maH}(A_0, \cdots, A_k) := \int_{\Delta^k} \STr\left( A_0 e^{-ut_0\frac{\widetilde{\theta}^{\maH}}{2 \pi i}} \widetilde{\partial}^\maH (A_1) e^{-ut_1\frac{\widetilde{\theta}^{\maH}}{2 \pi i}} \cdots e^{-ut_{k-1}\frac{\widetilde{\theta}^{\maH}}{2 \pi i}} \widetilde{\partial}^\maH (A_k)e^{-ut_k\frac{\widetilde{\theta}^{\maH}}{2 \pi i}}\right) dt_1\ldots dt_k
$$
for $A_0 \otimes\ldots \otimes A_k \in C_k\left(\Psi_\maL^{-\infty} (M|B; \widetilde{\maE})\right)$ and set again $\Phi _{\widetilde{\nabla}^\maH}= \sum_{k=0}^{\infty} \Phi^k_{\widetilde{\nabla}^\maH}$. Notice that
\begin{multline}\label{double}
\left[ \Phi _{\widetilde{\nabla}^\maH} \left(\Ch \left(P_D -\begin{bmatrix} 0 &0\\0 &1_{\maE} \end{bmatrix}\right)\right)\right]
=\\
 \Phi _{{\nabla}^\maH}\left(\ch\left(\ind D^+\right)\right)-\Phi _{{\nabla}^\maH}\left(\ch\left(\ind D^-\right)\right)  = 2 \Phi _{{\nabla}^\maH}\left(\ch\left(\ind D^+\right)\right)
\end{multline}

Replacing in the formulas above $\widetilde{\theta}^{\maH}$ by $ \widetilde{\theta}^{\A_{u^{-1}}}$ and $\widetilde{\partial}^{\maH}$ by $[\widetilde{\A}_{u^{-1}}, \cdot]$
we obtain the definition of the  morphism
$\Phi_{\widetilde{\A}} \colon CC_{\bullet}^{entire} \left(\Psi_\maL^{-\infty} (M|B; \widetilde{\maE})\right) \to \left(\Omega^*(M)[u], d_{\varOmega}\right)_{\bullet}$.

\begin{lemma}  The morphisms $\Phi_{\widetilde{\A}}$, $\Phi _{\widetilde{\nabla}^\maH} \colon CC_{\bullet}^{entire} \left(\Psi_\maL^{-\infty} (M|B; \widetilde{\maE})\right) \to \left(\Omega^*(B)[u], d_{\varOmega}\right)_{\bullet}$ are chain homotopic.
\end{lemma}
\begin{proof}
This follows from the explicit formula for the chain homotopy, see Proposition 5.6 of \cite{MathaiStevenson}.
This formula can be described as follows. Let $\A_{u^{-1}}(s)= s \A_{u^{-1}}+(1-s) \nabla^{\maH}$. Then we can write
\begin{multline*}
  \A_{u^{-1}}(s)=
su^{-1/2} D + \nabla^{\maH} - \frac{su^{1/2}}{4} c(T^{\maH}), \\ \theta^{\A_{u^{-1}}(s)} = \theta^{\maH} + s [\nabla^{\maH}, u^{-1/2} D- \frac{u^{1/2}}{4} c(T^{\maH})]+
s^2 (u^{-1/2} D- \frac{u^{1/2}}{4} c(T^{\maH}))^2\\ \text{ and } \frac{d}{ds} \A_{u^{-1}}(s) = u^{-1/2} D - \frac{u^{1/2}}{4} c(T^{\maH}).
\end{multline*}

Define $H_k \colon C_k(\Psi_{\maL}^{-\infty}(M|B, \maE \oplus \maE)) \to \Omega^*(B)[u]$ by
\begin{multline}
H_k(A_0, \ldots, A_k) = \int\limits_0^1 ds \left(
\sum  \limits_{m=0}^k (-1)^{m}\int_{\Delta^{k+1}}  dt_1\ldots dt_{k+1}  
 \STr\left(
 A_0 e^{-ut_0\frac{\widetilde{\theta}^{\A_{u^{-1}}(s)}}{2 \pi i}}    \right.  \right. \\  [\widetilde{\A}_{u^{-1}}(s), A_1] e^{-ut_1\frac{\widetilde{\theta}^{\A_{u^{-1}}(s)}}{2 \pi i}}\ldots
 [\widetilde{\A}_{u^{-1}}(s), A_m] e^{-ut_m\frac{\widetilde{\theta}^{\A_{u^{-1}}(s)}}{2 \pi i}}\
\widetilde{\frac{d}{ds} \A}_{u^{-1}} \ e^{-ut_{m+1}\frac{\widetilde{\theta}^{\A_{u^{-1}}(s)}}{2 \pi i}}
 \ldots  \\
 \left. \left. e^{-ut_{k-1}\frac{\widetilde{\theta}^{\A_{u^{-1}}(s)}}{2 \pi i}} [\widetilde{\A}_{u^{-1}}(s), A_k]e^{-ut_{k+1}\frac{\widetilde{\theta}^{\A_{u^{-1}}(s)}}{2 \pi i}}\right)\right)
\end{multline} 
and set $H = \sum H_k \colon CC_{\bullet}^{entire} \left(\Psi_\maL^{-\infty} (M|B; \widetilde{\maE})\right) \to \Omega^*(B)[u]$. Then
$$
\Phi_{\widetilde{\A}}-\Phi_{\nabla^{\maH}} = d_{\varOmega} \circ H + H\circ (b+uB).
$$
\end{proof}

Let $\maF $ be the algebra defined by
$$
\maF=\{ F \in \Psi_\maL^{0} (M|B; \widetilde{\maE})^{even}\ | \ [D, F] \in  \Psi_\maL ^{0} (M|B; \maE)\}.
$$
A simple modification of the argument in \cite{GS} as done in \cite{BenameurCarey}, cf. also \cite{Gorokhovsky}  shows that $\Phi_{\widetilde{\A}}$ extends to a morphism, also denoted  $\Phi_{\widetilde{\A}}\colon CC^{entire}_{\bullet} (\maF) \to \left(\Omega^*(M)[u], d_{\varOmega}\right)$ defined by the same formula.   Note that $U_D \in \maF$, $P_D \in \maF$ and
$\begin{bmatrix} 1_{\maE} &0\\0 &0 \end{bmatrix} \in \maF$.  

Recall now that inner automorphisms  act by identity on the entire cyclic homology, see e.g. \cite{Loday} 4.1.3 or \cite{Khalkhali}. It follows that the chains $\Ch(P_D)$ and $\Ch\left(\begin{bmatrix} 1_{\maE} &0\\0 &0 \end{bmatrix}\right)$ are homologous in $CC^{entire}_{\bullet} (\maF)$.

We therefore obtain
\begin{eqnarray*}
\Phi\left(\ch\left(\ind D^+\right)\right) & = & \frac{1}{2}\left[ \Phi _{\widetilde{\nabla}^\maH} \left(\Ch \left(P_D -\begin{bmatrix}  0&0\\0 &1_{\maE} \end{bmatrix}\right)\right)\right]\\
 & = &
\frac{1}{2}\left[ \Phi _{\widetilde{\A}} \left(\Ch \left(P_D  - \begin{bmatrix}  0&0\\0 &1_{\maE} \end{bmatrix} \right)\right)\right]\\
& = &  \frac{1}{2}\left[ \Phi _{\widetilde{\A}} \left(\Ch \left(P_D\right)\right) -\Phi _{\widetilde{\A}}\left(\Ch\left(\begin{bmatrix}  0&0\\0 &1_{\maE} \end{bmatrix}\right)\right)\right]\\
& = &
\frac{1}{2}\left[ \Phi _{\widetilde{\A}} \left(\Ch \left(\begin{bmatrix} 1_{\maE} &0\\0 &0 \end{bmatrix}\right)\right) \right]-\frac{1}{2}\left[ \Phi _{\widetilde{\A}}\left(\Ch\left(\begin{bmatrix}  0&0\\0 &1_{\maE} \end{bmatrix}\right)\right)\right]\\
 & = &
\left[\STr (e^{-\frac{u \theta^{\A_{u^{-1}}}}{2 \pi i}})\right].
\end{eqnarray*}
\end{proof}

\subsection{The local index theorem}

\begin{theorem}\label{local index}
Let  $D$ be a projective family of Dirac operators on a horizontally $\maL$-twisted Clifford  module $\maE$. We have the following equality of classes in $H^{\bullet}_{\maL}(B)$:
$$
  \left[\STr (e^{-\frac{u}{2\pi i}\theta^{\A_{u^{-1}}}}) \right]= \left[u^{-\frac{k}{2}}\int_{M|B}\widehat{A}\left(\frac{u}{2 \pi i}R^{M|B}\right)  \Ch_{\maL}(\maE/\maS)\right].
$$
\end{theorem}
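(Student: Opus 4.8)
The plan is to reduce the statement to Bismut's local family index theorem, applied fibrewise over the cover $\{U_\alpha\}$, the gerbe entering only through the central scalar curving $2$-forms $\omega_\alpha$. First I would introduce, for $s\in(0,1]$, the rescaled twisted superconnection $\A_{su^{-1}}$ whose local component over $U_\alpha$ is $s^{1/2}u^{-1/2}D_\alpha+\nabla^{\maH}_{\alpha}-\tfrac14 s^{-1/2}u^{1/2}c(T^{\maH})$, so that $s=1$ recovers the superconnection of the statement. All of these induce one and the same connection on $\maL$, so by the infinite-dimensional analogue of Proposition~\ref{chernsuperconnection} the form $\STr\!\left(e^{-\frac{u}{2\pi i}\theta^{\A_{su^{-1}}}}\right)$ is $d_{\varOmega}$-closed for every $s>0$, and a transgression argument shows that its class in $H^{\bullet}_{\maL}(B)$ does not depend on $s$; the well-definedness of the heat operators $e^{-\frac{u}{2\pi i}\theta^{\A_{su^{-1}}}}\in\Omega^*\!\left(B,\Psi^{-\infty}(M|B;\maE)\right)[u^{1/2}]$ and their smooth dependence on $s$ are imported from the heat-kernel estimates of \cite{Bismut,BismutFreed}, which apply verbatim since the question is local on $B$. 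Thus it suffices to identify $\lim_{s\to 0^+}\STr\!\left(e^{-\frac{u}{2\pi i}\theta^{\A_{su^{-1}}}}\right)$ as a differential form on $B$.

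Next I would compute this limit chart by chart. Over $U_\alpha$ one has $\theta^{\A_{su^{-1}}}|_{U_\alpha}=\theta^{\A_{\alpha,su^{-1}}}+\pi^*\omega_\alpha$ with $\pi^*\omega_\alpha$ central and scalar, hence $\STr\!\left(e^{-\frac{u}{2\pi i}\theta^{\A_{su^{-1}}}}\right)\big|_{U_\alpha}=e^{-\frac{u}{2\pi i}\omega_\alpha}\,\STr_{\maE_\alpha}\!\left(e^{-\frac{u}{2\pi i}\theta^{\A_{\alpha,su^{-1}}}}\right)$, where $\A_{\alpha,su^{-1}}$ is exactly the $u$-normalized rescaled Bismut superconnection of the ordinary family $\pi^{-1}U_\alpha\to U_\alpha$ with Clifford module $\maE_\alpha$ and Clifford connection $\nabla^{\maE}_{\alpha}$. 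Bismut's local family index theorem (\cite{Bismut}; see also \cite{BismutFreed} and \cite{bgv}) then gives, in the $C^{\infty}$ topology on forms over $U_\alpha$,
\[
\lim_{s\to 0^+}\STr_{\maE_\alpha}\!\left(e^{-\frac{u}{2\pi i}\theta^{\A_{\alpha,su^{-1}}}}\right)=u^{-k/2}\int_{\pi^{-1}U_\alpha|U_\alpha}\widehat{A}\!\left(\tfrac{u}{2\pi i}R^{M|B}\right)\str_{\maE/\maS}\!\left(e^{-\frac{u}{2\pi i}\theta^{\maE/\maS}_{\alpha}}\right),
\]
with $\theta^{\maE/\maS}_{\alpha}=\theta^{\maE}_{\alpha}-c(R^{M|B})$. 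On the other hand, by \eqref{deftwistedchern}, the identity $\theta^{\maE/\maS}|_{\pi^{-1}U_\alpha}=\theta^{\maE/\maS}_{\alpha}+\pi^*\omega_\alpha$, and the projection formula for $\int_{M|B}$ (valid because $\pi^*\omega_\alpha$ is pulled back from the base), the restriction to $U_\alpha$ of the globally defined form $u^{-k/2}\int_{M|B}\widehat{A}(\tfrac{u}{2\pi i}R^{M|B})\,\Ch_{\maL}(\maE/\maS)$ equals exactly $e^{-\frac{u}{2\pi i}\omega_\alpha}$ times the right-hand side of the displayed limit. Comparing the two computations, the form $\lim_{s\to 0^+}\STr\!\left(e^{-\frac{u}{2\pi i}\theta^{\A_{su^{-1}}}}\right)$ agrees on every $U_\alpha$ with $u^{-k/2}\int_{M|B}\widehat{A}(\tfrac{u}{2\pi i}R^{M|B})\,\Ch_{\maL}(\maE/\maS)$, hence the two coincide; and since by the first paragraph this limit form represents the same class in $H^{\bullet}_{\maL}(B)$ as $\STr\!\left(e^{-\frac{u}{2\pi i}\theta^{\A_{u^{-1}}}}\right)$, the theorem follows.

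The main obstacle is analytic, and classical rather than new: the existence of the $s\to 0^+$ limit and its Getzler-rescaling identification with the $\widehat A$-integrand, together with the uniform heat-kernel bounds underlying the transgression, are precisely the content of \cite{Bismut,BismutFreed} and are used here as a black box, applied unchanged over each $U_\alpha$. The twisted aspect contributes nothing beyond bookkeeping: over $U_\alpha$ the gerbe only inserts the central scalar factor $e^{-\frac{u}{2\pi i}\omega_\alpha}$, and the sole compatibility that genuinely needs to be verified --- that the local limits glue into a global form --- is the relation $\theta^{\maE/\maS}|_{\pi^{-1}U_\alpha}=\theta^{\maE/\maS}_{\alpha}+\pi^*\omega_\alpha$ built into the definition \eqref{deftwistedchern} of $\Ch_{\maL}(\maE/\maS)$. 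One small but essential point is that the limit must be identified at the level of differential forms on $B$, not merely of cohomology classes; this is exactly why the $s$-independence of the class is needed, in order to promote that form-level identity to the asserted equality in $H^{\bullet}_{\maL}(B)$.
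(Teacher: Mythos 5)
Your approach is essentially the same as the paper's: restrict over the cover $\{U_\alpha\}$, observe that the gerbe contributes only the central scalar factor $e^{-\frac{u}{2\pi i}\pi^*\omega_\alpha}$ commuting with the Bismut curvature, apply Bismut's local index theorem fibrewise, and glue via the identity $\theta^{\maE/\maS}|_{\pi^{-1}U_\alpha}=\theta^{\maE/\maS}_\alpha+\pi^*\omega_\alpha$ built into the definition of $\Ch_{\maL}(\maE/\maS)$. You also correctly handle the $u$-rescaling, the role of $d\mapsto d_{\varOmega}$ after multiplying by $e^{-\frac{u}{2\pi i}\pi^*\omega_\alpha}$, and the gluing of the limits.

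There is, however, a logical soft spot in the passage to $s\to 0^+$. You argue: (i) the class $\left[\STr e^{-\frac{u}{2\pi i}\theta^{\A_{su^{-1}}}}\right]$ is constant for $s>0$ by Proposition~\ref{chernsuperconnection}, and (ii) the form converges as $s\to 0^+$ to the global $\widehat A$-integrand, and then you claim that ``it suffices to identify the limit'' and that ``the $s$-independence of the class is needed \dots to promote'' the form-level identity to a cohomological one. This does not follow as stated: knowing that a family of cocycles $\omega_s$ represents a fixed class for $s>0$ and that $\omega_s\to\omega_0$ as forms does not, by itself, imply $[\omega_0]=[\omega_1]$; the transgression primitive connecting $\omega_1$ to $\omega_s$ could blow up as $s\to 0^+$. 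The ingredient that closes this gap is precisely the Bismut--Freed integrability result: the transgression density $\xi_\alpha(t)=-\frac{1}{2\pi i}\STr\bigl(\frac{d\A_{\alpha,t}}{dt}e^{-\frac{1}{2\pi i}\A_{\alpha,t}^2}\bigr)$ is integrable at $t=0$, so $\Xi=\int_0^1\xi\,dt$ is a bona fide global form and $\omega_1-\omega_0=d_{\varOmega}\Xi$. The paper makes this the centrepiece (equation~\eqref{BF}), whereas you cite \cite{BismutFreed} only obliquely under ``uniform heat-kernel bounds underlying the transgression,'' while simultaneously asserting that $s$-independence of the class is what ``promotes'' the form-level limit --- those two statements are not the same, and only the first one is what the argument actually needs. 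You should also note, as the paper does, that the local primitives $\Xi_\alpha$ patch into a single global $\Xi$ because $\frac{d\A_{\alpha,u^{-1}t}}{dt}=\phi_{\alpha\beta}\bigl(\frac{d\A_{\beta,u^{-1}t}}{dt}\bigr)$; in your formulation this is implicit since you already regard the transgression as a single global supertrace, but it deserves a sentence.
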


\begin{proof}
Over each open set $U_{\alpha}$ we have $\theta^{\A_{u^{-1}}} = \A_{\alpha, u^{-1}}^2 + \pi^*\omega_\alpha$. Since $\A_{\alpha, u^{-1}}^2$ and $\pi^*\omega_\alpha$ commute, we have
$$
\STr e^{-\frac{u}{2\pi i}\theta^{\A_{u^{-1}}}}  = e^{-\frac{u}{2\pi i}\pi^*\omega_\alpha} \STr e^{-\frac{u}{2\pi i} \A_{\alpha, u^{-1}}^2}.
$$
 According to Bismut's local index theorem for families \cite{Bismut}
\[
\lim_{t\to 0}  \STr e^{-\frac{1}{2\pi i} \A_{\alpha, t}^2} =  \int_{\pi^{-1}U_{\alpha}|U_{\alpha}}\widehat{A}\left(\frac{1}{2 \pi i}R^{M|B}\right) \str_{\maE/\maS} e^{-\frac{1}{2\pi i} \theta^{\maE/\maS}_{\alpha}} .
\]
Moreover, by the result of Bismut and Fried \cite{BismutFreed}
\begin{equation}\label{BF}
 \STr e^{-\frac{1}{2\pi i} \A_{\alpha}^2} -\int_{\pi^{-1}U_{\alpha}|U_{\alpha}}\widehat{A}\left(\frac{1}{2 \pi i}R^{M|B}\right) \str_{\maE/\maS} e^{-\frac{1}{2\pi i} \theta^{\maE/\maS}_{\alpha}} = d \int_0^1 \xi_{\alpha}(t) dt
\end{equation}
where
\begin{equation}\label{defxi}
\xi_{\alpha}(t) = -\frac{1}{2\pi i}\STr \frac{d \A_{\alpha, t}}{dt} e^{-\frac{1}{2\pi i} \A_{\alpha, t}^2}
\end{equation}
is integrable at $0$.

Let $s>0$ and let $\delta_s^B$ be the operator on $\Omega^*(B)$ which multiplies the forms of degree $k$ by $s^{k/2}$.
Then $\delta_s^B\circ\left( \A_{\alpha, t}\right)\circ\delta_{s^{-1}}^B= s^{1/2}\A_{\alpha, t/s}$. We therefore obtain
\begin{eqnarray*}
\lim_{t\to 0}\STr e^{-\frac{s}{2\pi i} \A_{\alpha, t/s}^2} & = & \delta_s^B \left( \lim_{t\to 0} \STr e^{-\frac{1}{2\pi i}
\A_{\alpha, t}^2}\right)\\
& = &  \delta_s^B \int_{\pi^{-1}U_{\alpha}|U_{\alpha}}\widehat{A}\left(\frac{1}{2 \pi i}R^{M|B}\right) \str_{\maE/\maS} e^{-\frac{1}{2\pi i} \theta^{\maE/\maS}_{\alpha}}\\
& = &  s^{-\frac{k}{2}}\int_{\pi^{-1}U_{\alpha}|U_{\alpha}}\widehat{A}\left(\frac{s}{2 \pi i}R^{M|B}\right) \str_{\maE/\maS} e^{-\frac{s}{2\pi i} \theta^{\maE/\maS}_{\alpha}}
\end{eqnarray*}
where, as before, $k=\dim M - \dim B$. Since both sides are polynomials in $s$ we deduce that
\[
\lim_{t\to 0}\STr e^{-\frac{u}{2\pi i} \A_{\alpha, u^{-1}t}^2}=u^{-\frac{k}{2}}\int_{\pi^{-1}U_{\alpha}|U_{\alpha}}\widehat{A}\left(\frac{u}{2 \pi i}R^{M|B}\right) \str_{\maE/\maS} e^{-\frac{u}{2\pi i} \theta^{\maE/\maS}_{\alpha}}
\]
Multiplying both sides by $e^{-\frac{u}{2\pi i}\pi^*\omega_\alpha}$ we obtain

\[
\left.\lim_{t\to 0}\STr \exp\left(-\frac{u}{2\pi i} \theta^{\A_{u^{-1}t}}\right) \right|_{U_{\alpha}} =u^{-\frac{k}{2}}\int_{\pi^{-1}U_{\alpha}|U_{\alpha}}\widehat{A}\left(\frac{u}{2 \pi i}R^{M|B}\right)  \Ch_{\maL}(\maE/\maS)
\]
and therefore
\[
 \lim_{t\to 0}\STr \exp\left(-\frac{u}{2\pi i} \theta^{\A_{u^{-1}t}}\right)  =u^{-\frac{k}{2}}\int_{M|B}\widehat{A}\left(\frac{u}{2 \pi i}R^{M|B}\right)  \Ch_{\maL}(\maE/\maS).
\]

Moreover by \eqref{BF}, we have
\begin{equation}\label{BF2}
 \STr e^{-\frac{u}{2\pi i} \A_{\alpha, u^{-1}}^2}-u^{-\frac{k}{2}}\int_{\pi^{-1}U_{\alpha}|U_{\alpha}}\widehat{A}\left(\frac{u}{2 \pi i}R^{M|B}\right) \str_{\maE/\maS} e^{-\frac{u}{2\pi i} \theta^{\maE/\maS}_{\alpha}} =ud \int_0^1 u^{-\frac{1}{2}}\delta_u^B \xi_{\alpha}(t)dt
\end{equation}
Here the right hand side is defined as follows.
Write $\xi_{\alpha}^{[l]} \in \Omega^l(B)$ for the component of degree $l$ of $\xi_{\alpha}$ from the equation \eqref{defxi}. Then we have
\[
u^{-\frac{1}{2}} \delta_u^B\xi_{\alpha}(t)= \sum u^{\frac{l-1}{2}}\xi_{\alpha}^{[l]}(t)= \frac{1}{2\pi i}
 \STr \frac{d \A_{\alpha, u^{-1}t}}{dt} \exp \left({-\frac{u}{2\pi i} \A_{\alpha, u^{-1}t}^2} \right)
 \in \Omega^*(B)[u].
\]
Multiplying the  identity \eqref{BF2} by $e^{-\frac{u}{2\pi i}\pi^*\omega_\alpha}$ and using the equality
\[
e^{-\frac{u}{2\pi i}\pi^*\omega_\alpha} u d(\cdot)= (ud+u^2\varOmega)(e^{-\frac{u}{2\pi i}\pi^*\omega_\alpha} \cdot)
\]
we obtain
\[
\left. \STr \exp\left(-\frac{u}{2\pi i} \theta^{\A_{u^{-1}}}\right) \right|_{U_{\alpha}} -u^{-\frac{k}{2}}\int_{\pi^{-1}U_{\alpha}|U_{\alpha}}\widehat{A}\left(\frac{u}{2 \pi i}R^{M|B}\right)  \Ch_{\maL}(\maE/\maS)=
(ud +u^2 \varOmega) \Xi_{\alpha}
\]
 where $\Xi_{\alpha}= \int_0^1 \frac{1}{2\pi i}
 \STr \frac{d \A_{\alpha, u^{-1}t}}{dt} \exp \left(-\frac{u}{2\pi i}  \left.\theta^{\A_{ u^{-1}t}}\right|_{U_{\alpha}} \right)dt$.

From the equations \eqref{suco} it follows that over $U_{\alpha \beta}$ we have $\frac{d \A_{\alpha, u^{-1}t}}{dt} = \phi_{\alpha \beta} \left(\frac{d \A_{\beta, u^{-1}t}}{dt} \right) $. Hence there exists a form $\dot{\A}_t\in u^{-1/2}\Omega^*(B; \Psi_\maL (M|B, \maE))[u]$ such that $\left. \dot{\A}_t \right|_{U_{\alpha}}
= \frac{d \A_{\alpha, u^{-1}t}}{dt}$. Therefore setting $\Xi = \int_0^1 \frac{1}{2\pi i}
 \STr \dot{\A}_t \exp \left(-\frac{u}{2\pi i}  \theta^{\A_{u^{-1}t}} \right)dt \in \Omega^*(B)[u]$
  we can write
\[
\STr \exp\left(-\frac{u}{2\pi i} \theta^{\A_{u^{-1}}}\right)-u^{-\frac{k}{2}}\int_{M|B}\widehat{A}\left(\frac{u}{2 \pi i}R^{M|B}\right)  \Ch_{\maL}(\maE/\maS)
  = d_{\varOmega}\Xi
\]
and the statement of the Theorem follows.
\end{proof}

Combining results of the Theorems \ref{superconnection index} and \ref{local index} we obtain the
 main theorem of this paper

\begin{thm} \ Let  $D$ be a projective family of Dirac operators on a horizontally
$\maL$-twisted Clifford  module $\maE$ on a  fibration  $\pi:M\to B$. Then the following equality holds
in $H^{\bullet}_{\maL}(B)$:
$$
\left[\Phi_{\nabla^{\maH}} (\ch (\ind D^+))\right]  =
\left[u^{-\frac{k}{2}}\int_{M|B}\widehat{A}\left(\frac{u}{2 \pi i}R^{M|B}\right)  \Ch_{\maL}(\maE/\maS)\right],
$$
where $k=\dim M -\dim B$ is the dimension of the fibers.
\end{thm}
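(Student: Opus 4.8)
The plan is to obtain the equality by transitivity from the two results already established in this section, Theorem~\ref{superconnection index} and Theorem~\ref{local index}; no further analysis is needed. First I would invoke Theorem~\ref{superconnection index}, which computes the left-hand side in terms of the Bismut superconnection $\A$ adapted to $D$ (built from the chosen Clifford connection $\nabla^{\maE}$ and horizontal distribution $\maH$): it gives
\[
\Phi_{\nabla^{\maH}}(\ch(\ind D^+)) = \left[\STr\bigl(e^{-\frac{u \theta^{\A_{u^{-1}}}}{2 \pi i}}\bigr)\right]
\]
in $H^{\bullet}_{\maL}(B)$. This is the twisted analogue of the passage, in the Bismut--Freed picture, from the Chern character of the index bundle to the Chern character of the Bismut superconnection, and it is the step where the cyclic-homology machinery is used: the entire cyclic complex, the chain homotopy relating $\Phi_{\widetilde{\nabla}^{\maH}}$ and $\Phi_{\widetilde{\A}}$, the extension of $\Phi_{\widetilde{\A}}$ to the algebra $\maF$, and the inner-automorphism invariance of entire cyclic homology that replaces $P_D$ by a constant idempotent.

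Next I would apply Theorem~\ref{local index}, which evaluates the right-hand side of the displayed identity as a characteristic-class integral over the fibers:
\[
\left[\STr\bigl(e^{-\frac{u \theta^{\A_{u^{-1}}}}{2 \pi i}}\bigr)\right] = \left[u^{-\frac{k}{2}}\int_{M|B}\widehat{A}\left(\frac{u}{2 \pi i}R^{M|B}\right)\Ch_{\maL}(\maE/\maS)\right],
\]
again in $H^{\bullet}_{\maL}(B)$, where $k = \dim M - \dim B$. Concatenating the two displayed equalities of classes yields precisely the asserted formula, which is the restatement of Theorem~\ref{main}. Since both inputs are in hand, this final combination is purely formal and presents no obstacle.

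The genuine difficulties live inside the two cited theorems rather than in their combination. In Theorem~\ref{superconnection index} the delicate points are showing that the Chern--Simons transgression $H=\sum H_k$ is a well-defined chain homotopy on the entire cyclic complex, that $\Phi_{\widetilde{\A}}$ extends to the larger algebra $\maF$ of order-zero operators whose commutator with $D$ is again of order zero, and that every term in these formulas is compatible with the gerbe transition isomorphisms $\phi_{\alpha\beta}$ and the connection $\nabla_{\alpha\beta}$ so that the local expressions glue to global forms on $B$. In Theorem~\ref{local index} the work is the rescaling argument that converts Bismut's fiberwise local index limit and the Bismut--Freed transgression formula into a $d_\varOmega$-exact correction over $B$, after peeling off the gerbe-curving factor $e^{-\frac{u}{2\pi i}\pi^*\omega_\alpha}$ and absorbing the $u^2\varOmega\wedge\cdot$ term of $d_\varOmega$. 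I expect the twisted cyclic-homotopy bookkeeping in Theorem~\ref{superconnection index} to be the harder of the two, since one must track compatibility with $\phi_{\alpha\beta}$ through each entire cyclic chain.
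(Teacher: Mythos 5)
Your proposal is correct and is essentially identical to the paper's own argument: the paper also derives the main theorem immediately by concatenating the equalities of Theorem~\ref{superconnection index} and Theorem~\ref{local index}, with no further work at that step. Your supplementary remarks about where the real difficulties sit (the entire-cyclic chain homotopy and extension to $\maF$ in one, the rescaling of the Bismut--Freed transgression and the $e^{-\frac{u}{2\pi i}\pi^*\omega_\alpha}$ conjugation in the other) accurately reflect the structure of the two cited proofs.
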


\end{document}